	\def\\{}          
	\def\texttt#1{#1} 
	\def\alpha{alpha}  
\begin{document}
	
	\newtheorem{corollary}[theorem]{Corollary}
	\newtheorem{prop}[theorem]{Proposition}
	\newcommand{\figref}[1]{Fig.~\ref{#1}}
	\renewcommand{\proofname}{\textbf{Proof}}

\title{An adaptive time-stepping strategy for the modified phase field crystal model with a strong nonlinear vacancy potential}

\author[O.~Author]{Wanrong Hao, Yunqing Huang\corrauth}
\address{National Center for Applied Mathematics in Hunan, School of Mathematics and Computational Science, Xiangtan University,
Xiangtan 411105, China}
\email{{\tt haowanrong@smail.xtu.edu.cn(W.Hao), huangyq@xtu.edu.cn} (Y.Huang) }


\begin{abstract}
This paper develops three linear and energy-stable schemes for a modified phase field crystal model with a strong nonlinear vacancy potential (VMPFC model). This sixth-order phase-field model enables realistic crystal growth simulation.
Starting from a Crank-Nicolson scheme based on the stabilized-SAV (S-SAV) method, we optimize it via the generalized positive auxiliary variable (GPAV) and modified exponential scalar auxiliary variable (ESAV) methods, thereby reducing computational complexity or eliminating the requirement for the nonlinear free energy potential to be bounded from below.
The newly developed Energy-Variation Moving Average (EV-MA) adaptive time-stepping strategy resolves numerical instabilities and mitigates the high parameter sensitivity of the conventional adaptive time algorithm during rapid energy decay in the strongly nonlinear system. Unlike conventional instantaneous energy-derivative monitors, the EV-MA technique incorporates a moving average of the energy variation. Additionally, the rate of change between adjacent time steps is constrained by a maximum change factor. This design effectively dampens spurious oscillations and enhances the robustness of time step selection. Extensive numerical experiments are conducted to validate the accuracy and energy stability of the proposed schemes. The EV-MA strategy is also demonstrated to perform robustly across a wide range of parameters.
\end{abstract}

\ams{65M06, 65M70, 65M12.}
\keywords {Modified phase-field crystal, vacancy, SAV, energy stability, adaptive.}

\maketitle

\section{Introduction}
\label{sec1}
Crystallization is a fundamental technique for purifying solid compounds. This process is typically induced by cooling a saturated solution, which reduces the solubility of solute and promotes the formation of pure crystals. A key stage in this process is crystal growth, during which atoms, ions, or molecules arrange themselves into a highly ordered crystalline lattice. This phenomenon is a fundamental phase transition, involving mass transfer from a liquid or vapor phase to a solid crystalline phase.
However, the growth process is susceptible to disturbances, often arising from the exclusion of impurities, which can introduce crystallographic defects such as vacancies, grain boundaries, and dislocations. Consequently, real-world crystals are never structurally perfect. These defects, when present in sufficient density, profoundly influence the mechanical, chemical, electronic, and physical properties of materials. Furthermore, the formation and dynamics of these defects are critical to many technological processes, including diffusion, oxidation, precipitation, and annealing.

To elucidate crystal growth mechanisms, numerous theoretical models have been developed and extensively used in simulations. Among these, the classical phase field crystal (PFC) model, proposed by Elder and Grant in \cite{1elder2004modeling,2elder2002modeling}, describes the dynamics of atomic-scale crystal growth on diffusive time scales.
The PFC model utilizes a phase field variable—representing a coarse-grained time-averaged atomic density field—to act as an order parameter for the liquid-to-crystal transition. This system captures the fundamental distinction between the phases: the nearly uniform density distribution in the liquid state and the 
periodic ordered structure in the crystalline solid.
The PFC model operates exclusively on diffusive timescales, inherently lacking the capacity to capture elastic interactions such as the deformation properties of nanocrystalline solids. 
To address this limitation, the authors in  \cite{3stefanovic2006phase,4stefanovic2009phase} extended the PFC model to a modified PFC (MPFC) model. This makes it possible to self-consistently simulate rapid elastic relaxation over large length scales. 
Formulated as a sixth-order, nonlinear, damped wave equation, the MPFC model captures the viscoelastic response to perturbations in the density field.

Vacancies, crucial point defects in crystalline materials, arise from insufficient atoms occupying lattice sites, profoundly influencing material properties. Traditional PFC and MPFC models fail to capture these defects. Their free energy functionals lack penalties for negative order parameter values, leading to perfectly periodic, vacancy-free predictions.
The MPFC model with strong nonlinear vacancy potential (VMPFC) proposed by Chan et al.\cite{5chan2009molecular} achieves a positive density constraint.
It introduces a strong nonlinear penalty potential $F_{vac}(\phi)$ into the free energy functional,  penalizing negative values of $\phi$ and thereby stabilizing vacancies. This allows the model to form periodic structures in some regions while maintaining near-zero or zero density in others, which are accurately interpreted as vacancies. Initially developed for elastic and plastic transformations in nanocrystalline materials, the VMPFC model has since proven valuable across diverse applications, including defect dynamics, grain coarsening, and solidification processes.

Numerous studies have focused on developing numerical approximations for the PFC model,  such as the convex splitting approach \cite{7wise2009energy,8hu2009stable,9shin2016first,10yang2017linearly}, the invariant energy quadratization (IEQ) approach \cite{16li2019efficient,17liu2020efficient}, and the scalar auxiliary variable (SAV) approach \cite{21li2020stability,22wang2019error,23yang2022linear,45Wang2021Error}.  
The convex splitting approach \cite{6eyre1998unconditionally} decomposes the energy functional into convex and concave terms, with explicit treatment of convex terms and implicit treatment of concave terms.
An unconditionally energy stable finite-difference scheme for the PFC model was developed in \cite{7wise2009energy} utilizing the convex splitting approach.
Hu et al.\cite{8hu2009stable} compared two energy stable finite-difference schemes: an improved one-step scheme based on a convex splitting \cite{7wise2009energy} and a new second-order two-step scheme.
Yang et al.\cite{10yang2017linearly} developed a series of linear, unconditionally energy stable numerical schemes for solving the PFC model.
The invariant energy quadratization (IEQ) approach was proposed by Yang et al. \cite{11yang2016linear,12yang2017efficient,15yang2017numerical}, and was successfully extended to the PFC model \cite{17liu2020efficient}, establishing unconditional energy stable schemes.
Shen et al. proposed the scalar auxiliary variable (SAV) approach  \cite{18shen2018convergence,19shen2018scalar,20shen2019new}, which has been widely used to construct linear energy stable numerical schemes\cite{40li2019energy,41liu2020two}. The SAV approach retains all advantages of the IEQ approach while overcoming most of its shortcomings.
In particular, the SAV approach only requires solving decoupled equations with constant coefficients at each time step and is not restricted to specific forms of the nonlinear part of the free energy.
It has been effectively applied in different spatial discretization schemes for the PFC model: Fourier-spectral SAV scheme \cite{21li2020stability}, finite difference SAV scheme \cite{23yang2022linear} and finite element SAV scheme \cite{22wang2019error}.
For the MPFC model, the convex splitting approach \cite{24baskaran2013energy,25baskaran2013convergence,26wang2011energy}, the IEQ approach \cite{16li2019efficient}, the SAV approach \cite{28li2022efficient,42qi2023error} and many other approaches \cite{27qian2025error,29guo2018high,30dehghan2016numerical} have also been used to construct energy stable numerical schemes.

However, numerical studies for the VMPFC model remain limited\cite{13lee2024linear,14yang2022simple,31zhang2019efficient,32pei2022efficient,33zhang2023highly}. A linear, energy-stable backward difference formula (BDF2) time marching scheme was developed in \cite{31zhang2019efficient}, combining the SAV approach with the traditional stabilization techniques. Three linear, unconditionally energy-stable numerical schemes were constructed in \cite{32pei2022efficient} using the stabilized-IEQ approach, though these schemes require the solution of elliptic systems with complicated variable coefficients at each time step. Through the combination of the modified ESAV approach and the relaxed SAV approach, first-order backward Euler and second-order BDF2 time-accurate schemes were developed in \cite{33zhang2023highly}.
There exist more challenges in designing time-accurate and energy-stable numerical schemes for the VMPFC model. The main difficulties arise from its strong nonlinear potential and stiffness issues caused by the nonlinear terms and higher-order differential terms. If the nonlinear vacancy term is discretized in a simple fully-implicit or explicit way, the resulting schemes become either energy-unstable or computationally expensive, rendering them inefficient in practice. However, the energy conservation law is vital to capture the correct dynamics and establish the existence and convergence of approximate solutions.

In this paper, we first employ the traditional SAV approach with stabilization techniques to construct an energy-stable Crank-Nicolson scheme for the VMPFC model.
Each time step involves solving two decoupled linear equations with constant coefficients, yielding a fast, non-iterative, and easily implementable scheme.
However, we find that the nonlocal auxiliary variable introduced by the traditional SAV approach must be bounded below.
To overcome the lower-bound constraint and enhance efficiency, we propose two improved methods: the Generalized Positive Auxiliary Variable (GPAV) and the modified Exponential Scalar Auxiliary Variable (ESAV) methods. Using these methods combined with stabilization techniques, we develop two decoupled CN schemes. We rigorously prove unconditional energy stability for all schemes. 
Based on the moving average of the energy variation, we then propose a novel adaptive time-stepping algorithm. To the authors' knowledge, this is the first adaptive time strategy for the VMPFC model.	
This algorithm can provide an effective adaptive time strategy for some models that are highly sensitive to varying time step sizes during rapid energy decay phases.
Finally, various 2D and 3D numerical experiments validate the energy stability and accuracy of the proposed schemes.

The rest of this paper is organized as follows. In Section 2, we introduce the VMPFC model and its corresponding energy law. In Section 3, we first consider the stabilized-SAV method for the VMPFC model and give a detailed proof of the unconditional energy stability. Then, the two improved methods based on GPAV and ESAV methods are proposed to construct energy stable schemes. We prove that these schemes are unconditionally energy stable, uniquely solvable, and mass conservative. 
In addition, a novel EV-MA adaptive time-stepping strategy is proposed.
In Section 4, some numerical experiments demonstrate the energy stability and accuracy of our proposed schemes, as well as the advantages of the novel adaptive algorithm. Some conclusions are presented in Section 5.

\section{Governing systems}
\label{sec2}
Let $(\cdot,\cdot)_{K}$ and $\|\cdot\|_{K}$ ($K\geq0$) denote the inner product and norm in the Sobolev space $H^{K}(\Omega)$, respectively. Note that $H^{0}(\Omega)=L^{2}(\Omega)$. We denote the $L^{2}$ inner product and its norm by $(\cdot,\cdot)$ and $\|\cdot\|$, respectively. We define the spaces $L_0^2\left( \Omega  \right) = \left\{ {\phi  \in {L^2}\left( \Omega  \right):\left( {\phi ,1} \right) = 0} \right\}$, $L_{per}^2\left( \Omega  \right)$$=\{\phi\in L^{2}(\Omega):\phi$ is periodic on $\partial\Omega\}$, $H_{per}^K\left(\Omega  \right)$$=\{\phi\in H^{K}(\Omega):\phi$ is periodic on $\partial\Omega\}$, The dual space of $H_{per}^K$ is denoted by $H_{per}^{-K}.$
The VMPFC model is derived from the following free energy functional:
\begin{equation}\label{2.1}
	E(\phi)=\int_{\Omega}{\Big(}\frac{1}{2}(\Delta\phi)^{2}-|\nabla\phi|^2+\frac{1}{2}\phi^2+F(\phi)+F_{vac}(\phi)\Big)d\bm{x},
\end{equation}
where $\Omega$ is a domain in $\mathbb{R}^d(d=1,2,3)$. $\phi(\bm{x},t)$ is introduced to describe the local atomic density field as a phase field variable. The double-well potential is given by $F(\phi)=\frac{1}{4}\phi^{4}-\frac{\epsilon}{2}\phi^{2}$, where $\epsilon\in(0,1)$.
$F_{vac}(\phi)=\frac{h_{vac}}{3}(|\phi|^{3}-\phi^{3})$ is the vacancy potential, which penalizes the negative values of $\phi$.
The penalization parameter $h_{vac}$ satisfies $h_{vac}\gg1$.
By taking the first-order derivatives of $F(\phi)$ and $F_{vac}(\phi)$ with respect to $\phi$,  we obtain $f\left( \phi  \right) = F'\left( \phi  \right) = {\phi ^3} - \varepsilon \phi $ and $ {f_{vac}}\left( \phi  \right) = {F'_{vac}}\left( \phi  \right) = {h_{vac}}\left( {\left| \phi  \right| - \phi } \right)\phi $.
The VMPFC model can be regarded as the $H^{-1}(\Omega)$ gradient flow of the energy functional \eqref{2.1}:
\begin{equation}\label{2.2}
	\alpha\phi_{tt}+\beta\phi_{t}=M\Delta\mu,
\end{equation}
\begin{equation}\label{2.3}
	\mu=(\Delta+1)^{2}\phi+f(\phi)+f_{vac}(\phi),
\end{equation} \\
where $\mu :=\frac{{\partial E\left( \phi  \right)}}{{\partial \phi }}$ is the chemical potential, \( M \) (with \( M > 0 \)) is the mobility constant, and \( \alpha \), \( \beta \) are non-negative parameters.
The system \eqref{2.2}-\eqref{2.3} degenerates back to:
(i) the MPFC model \cite{3stefanovic2006phase,4stefanovic2009phase} when \( h_{vac} = 0 \),
(ii) the classical PFC model \cite{1elder2004modeling,2elder2002modeling} when \( \alpha = h_{vac} = 0 \).
We adopt the periodic boundary conditions, as commonly done in studies of classical PFC model \cite{8hu2009stable,9shin2016first,10yang2017linearly} and the MPFC model \cite{16li2019efficient,24baskaran2013energy,27qian2025error}.
The theoretical and numerical analysis also applies to physical boundary conditions such as Neumann-type~\cite{28li2022efficient}, where boundary integrals vanish in the variational formulation. The initial conditions are given by:
\begin{equation}
	\phi(\bm{x},0)=\phi^{0} , \phi_{t}(\bm{x},0)=\phi_{t}^{0} =0.
\end{equation}

Subsequently, the conservation property of mass for the VMPFC model \eqref{2.2}-\eqref{2.3} holds provided that  $\psi^{0}\in L_0^2\left( \Omega  \right)$. In fact, by taking the $L^{2}$ inner product of \eqref{2.2} with 1, we get
$\alpha\frac{d}{dt}\int_{\Omega}\phi_{t}(\bm{x},t)d\bm{x}+\beta\int_{\Omega}\phi_{t}(\bm{x},t)d\bm{x}=0.$
Then, we have
$\int_{\Omega}\phi_{t}(\bm{x},t)dx=e^{\frac{-\beta}{\alpha}t}\int_{\Omega}\phi_{t}(\bm{x},0)d\bm{x}.$
So if $\psi^{0}\in L_0^2\left( \Omega  \right)$, we can obtain
\begin{equation}\label{2.5}
	\int_{\Omega}\phi_{t}(x,t)d\bm{x}=\int_{\Omega}\phi_{tt}(x,t)d\bm{x}=0.
\end{equation}

Now, we derive the energy dissipation law for the VMPFC model \eqref{2.2}-\eqref{2.3}, which is also an important property for dissipative system. The inverse Laplace operator $-\Delta^{-1}$ and $H_{per}^{-1}$ are defined as follows. Supposing $\phi\in L_0^2\left( \Omega  \right)$, we consider the periodic boundary value problem
$ -\Delta\varphi=\phi$  in $\Omega$
has a unique solution $\varphi$ in the Sobolev space ${H_{per}^2(\Omega)}\cap L_0^2\left( \Omega  \right)$. We define $\varphi:=-\Delta^{-1}\phi$. Suppose that $\phi_{1}$ and  $ \phi_{2}\in L_0^2\left( \Omega  \right)$, we define the $H_{per}^{-1}$ inner product by
$(\phi_{1},\phi_{2})_{-1}:=(\nabla\varphi_{1},\nabla\varphi_{2}),$
where  $\varphi_{1}=\triangle^{-1}\phi_{1}$, $\varphi_{2}=\triangle^{-1}\phi_{2}$, and $\|\phi\|_{-1}=\sqrt{(\phi,\phi)_{-1}}$. Thus, we have the following identity:
\begin{equation} (\phi_{1},\phi_{2})_{-1}=(-\Delta^{-1}\phi_{1},\phi_{2})=(\phi_{1},-\Delta^{-1}\phi_{2}).
\end{equation}
Thanks to a variable $\psi=\phi_{t}$, we can immediately obtain that $\int_{\Omega}\psi d\bm{x}=\int_{\Omega}\psi_{t}d\bm{x}=0$ from \eqref{2.5}. This implies $\psi_{t}\in L_0^2\left( \Omega  \right)$ and $\psi \in L_0^2\left( \Omega  \right)$. Plugging \eqref{2.3} back into \eqref{2.2} and taking the operator $\Delta^{-1}$, we get
\begin{equation} \label{2.7}
	\alpha\Delta^{-1}\psi_t+\beta\Delta^{-1}\psi=M\left((1+\Delta)^2\phi+f(\phi)+f_{vac}(\phi)\right).
\end{equation}
Taking the $L_{2}$ inner product of \eqref{2.7} with $\frac{1}{M}\phi_{t}$ gives the  energy dissipation law:
\begin{equation}\label{2.8}
	\frac{d}{dt}\widetilde{E}(\phi,\psi)=-\frac{\beta}{M}\|\psi\|_{-1}^{2}\leq0,
\end{equation}
where the pseudo energy $\widetilde{E}$ of the VMPFC model is defined as
\begin{equation}\label{2.9}
	\widetilde{E}=E(\phi)+\frac{\alpha}{2M}\|\psi\|_{-1}^{2}.
\end{equation}
This implies that the pseudo energy $\widetilde{E}$ is non-increasing with temporal evolution.

\section{Numerical schemes}
In this section, we will consider some efficient procedures to give three numerical schemes based on the Crank-Nicolson method for the VMPFC model.
Before giving the semi-discrete schemes, we give some definitions.
For a fixed time step $\Delta t > 0$, we define the final time as $T = N \Delta t$ and the discrete time points as $t^{n} = n \Delta t$ , where n ranges from 0 to N.
For a given continuous function $f(x,t)$, we denote $f^{n}$ as the numerical approximation of $f(x,t^{n})$ at time level $t^{n}$. We have the following mathematical definitions:
\begin{equation}
	f^{n+1/2}=\frac{f^{n+1}+f^{n}}{2},\quad f^{\dagger,n+1}=\frac{3}{2}f^{n}-\frac{1}{2}f^{n-1},\quad f^{\star, n+1}=2f^{n}-f^{n-1}.
\end{equation}

\subsection{The traditional S-SAV-CN scheme}
The primary computational challenge in solving the coupled system \eqref{2.2}-\eqref{2.3} stems from the treatment of the nonlinear terms $f(\phi)$ and $f_{vac}(\phi)$.
The recently popular SAV approach \cite{19shen2018scalar} provides valuable inspiration for constructing an efficient scheme that is linear, second-order accurate, and unconditionally energy stable. The key idea of the SAV approach is to define the square root of the nonlinear free energy terms in the energy functional as a simple scalar auxiliary variable. Then, by rewriting the system, the handling of nonlinear terms becomes easier. The advantage is that at each time step, only a few linear equations with constant coefficients need to be solved.
To develop a novel temporal discretization scheme for the system \eqref{2.2}-\eqref{2.3} using the SAV approach, we introduce an auxiliary, nonlocal, time-dependent function $u(t)$ defined as
\begin{equation}\label{3.2}
	u(t)=\sqrt{\int_{\Omega}F(\phi)+F_{vac}(\phi)dx +b},
\end{equation}
where $b$ is a constant ensuring radicand is strictly positive. In other words, this approach requires the assumption that $\int_{\Omega}F(\phi)+F_{vac}(\phi)dx$ is bounded from below.
Through this well-defined variable $u(t)$, the pseudo energy functional 	$\widetilde{E}$ in \eqref{2.9} can be reformulated as the following modified energy:
\begin{equation}\label{3.3}
	\mathcal{E}(u)=\int_{\Omega}(\frac{1}{2} |(\Delta+1)\phi|^2dx+\frac{\alpha}{2M}\|\psi\|_{-1}^{2}+u^{2}-b.
\end{equation}
We rewrite the original VMPFC model \eqref{2.2}-\eqref{2.3} as the following equivalent system:
\begin{equation}\label{3.4}
	\alpha\psi_{t}+\beta\psi=M\Delta\mu,
\end{equation}
\begin{equation}\label{3.5}
	\mu=(\Delta+1)^{2}\phi+Hu,
\end{equation}
\begin{equation}\label{3.6}
	\psi=\phi_{t},
\end{equation}
\begin{equation}\label{3.7}
	u_{t}=\frac{1}{2}\int_{\Omega}H\phi_{t}dx,
\end{equation}
where
\begin{equation}
	H=\frac{f(\phi)+f_{vac}(\phi) }{\sqrt{\int_{\Omega}F(\phi)+F_{vac}(\phi)dx + b}}.
\end{equation}
The system is subject to the periodic boundary conditions and the initial conditions:
\begin{equation}
	\phi(\bm{x},0)=\phi^{0},\psi(\bm{x},0)=\psi^{0}=0,\quad u(t=0)=\sqrt{\int_{\Omega}F(\phi^{0})+F_{vac}(\phi^{0})dx + b }.
\end{equation}

The new system \eqref{3.4}-\eqref{3.7} is also unconditionally energy stable, satisfying an energy dissipation law. By taking the operator $\Delta^{-1}$ on \eqref{3.4} and inserting the expression \eqref{3.5} into \eqref{3.4}, we get
$$\frac{\alpha}{M}\Delta^{-1}\psi_{t}+\frac{\beta}{M}\Delta^{-1}\psi=(\Delta+1)^{2}\phi+Hu.$$
Taking the $L^{2}$ inner product with $\phi_{t}$, combined with \eqref{3.6} and \eqref{3.7}, yields
\begin{equation}
	\frac{d}{dt}\mathcal{E}(u)=-\frac{\beta}{M}\|\psi\|_{-1}^{2}\leq0.
\end{equation}

Combining the SAV approach with traditional stabilization techniques, the second-order stabilized-SAV scheme for solving the equivalent system \eqref{3.4}-\eqref{3.7} can be constructed using the Crank-Nicolson (CN) method.
Given the previously computed values ($\phi^{n-1}$, $\psi^{n-1}$, $u^{n-1}$) and ($\phi^{n}$, $\psi^{n}$, $u^{n}$), the values ($\phi^{n+1}$, $\psi^{n+1}$, $u^{n+1}$) at the next time step are computed using the following S-SAV-CN scheme:
\begin{equation}\label{3.11}
	\alpha \frac{\psi^{n+1}-\psi^{n}}{\Delta t}+\beta\psi^{n+1/2}=M\Delta\mu^{n+1/2},
\end{equation}
\begin{equation}\label{3.12}
	\mu^{n+1/2}=(\Delta+1)^{2}\phi^{n+1/2}+H^{\dagger ,n+1}u^{n+1/2}+S(\phi^{n+1}-\phi^{\star ,n+1}),
\end{equation}
\begin{equation}\label{3.13}
	\psi^{n+1/2}=\frac{\phi^{n+1}-\phi^{n}}{\Delta t},
\end{equation}
\begin{equation}\label{3.14}
	u^{n+1}-u^{n}=\frac{1}{2}\int_{\Omega}H^{\dagger, n+1}(\phi^{n+1}-\phi^{n})dx,
\end{equation}
where $H^{\dagger,n+1}=H(\phi^{\dagger, n+1})$, and $S$ is a positive stabilization parameter.
\begin{remark}\label{sav-re1}
	We need to solve ($\phi^{1}$,$\psi^{1}$,$u^{1}$), which can be obtained using the following first-order scheme based on backward-Euler formulation:
	\begin{equation}\label{3.15}
		\alpha \frac{\psi^{1}-\psi^{0}}{\Delta t}+\beta\psi^{1}=M\Delta\mu^{1},
	\end{equation}
	\begin{equation}\label{3.16}
		\mu^{1}=(\Delta+1)^{2}\phi^{1}+H^{0}u^{1}+S(\phi^{1}-\phi^{0}),
	\end{equation}
	\begin{equation}\label{3.17}
		\psi^{1}=\frac{\phi^{1}-\phi^{0}}{\Delta t},
	\end{equation}
	\begin{equation}\label{3.18}
		u^{1}-u^{0}=\frac{1}{2}\int_{\Omega}H^{0}(\phi^{1}-\phi^{0})dx,
	\end{equation}
	where $H^{0}=H(\phi^{0})$, and $S$ is a positive stabilization parameter. 
\end{remark}
\begin{remark}\label{sav-re2}
	We introduce two stabilization terms $S(\phi^{1}-\phi^{0})$ and $S(\phi^{n+1}-\phi^{\star ,n+1})$ in the numerical schemes \eqref{3.11}-\eqref{3.14} and \eqref{3.15}-\eqref{3.18}.  	
	This method is a widely used stabilization method in phase field model numerical calculations (see \cite{34feng2015long,36xu2006stability,37shen2010numerical,31zhang2019efficient,32pei2022efficient,33zhang2023highly}).	
	The two introduced numerical errors are of orders $S\Delta t\phi_{t}(\cdot)$ and $S\Delta t^{2}\phi_{tt}(\cdot)$, matching the orders of the errors introduced by the explicit treatment of the nonlinear term.	   
	The stabilization term is crucial for energy stability, especially when $h_{vac}$ and $\Delta t$ are relatively large, as demonstrated in our later numerical experiments. To get a balance between
	stability and accuracy, one could take $S \sim h_{\text{vac}} O\left( \left|\min_{x \in \Omega} \left(\phi(x, t), 0 \right)\right|\right)$
	(cf. \cite{31zhang2019efficient,32pei2022efficient})
\end{remark}

\begin{theorem}\label{sav-th1}The S-SAV-CN scheme \eqref{3.11}-\eqref{3.14} satisfies mass conservation.
\end{theorem}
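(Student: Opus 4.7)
The plan is to track the spatial integral of each discrete field across a time step and show that $\int_\Omega \phi^{n+1}\,d\bm{x}=\int_\Omega \phi^n\,d\bm{x}$. The only nontrivial ingredient is the periodic boundary condition, which kills the Laplacian in the momentum-like equation; everything else is bookkeeping.

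First I would integrate \eqref{3.11} over $\Omega$. The right-hand side is $M\int_\Omega \Delta\mu^{n+1/2}\,d\bm{x}$, which vanishes by the divergence theorem under the periodic boundary condition (the chemical potential $\mu^{n+1/2}$ and all its spatial derivatives are periodic on $\partial\Omega$). Denoting $P^n:=\int_\Omega \psi^n\,d\bm{x}$, this produces the linear two-level relation
\begin{equation*}
\frac{\alpha}{\Delta t}\bigl(P^{n+1}-P^n\bigr)+\frac{\beta}{2}\bigl(P^{n+1}+P^n\bigr)=0,
\end{equation*}
for $n\geq 1$. Handling the first step separately, I would integrate the backward-Euler scheme \eqref{3.15} to obtain $(\alpha/\Delta t+\beta)P^1=MP^0\Delta\text{-integral}=0$, while the initial condition $\psi^0=0$ gives $P^0=0$.

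Second, I would establish by induction on $n$ that $P^n=0$ for all $n\geq 0$. The base cases $n=0,1$ were just verified. For the inductive step, assuming $P^n=0$, the recurrence above yields $\bigl(\alpha/\Delta t+\beta/2\bigr)P^{n+1}=0$, and since $\alpha/\Delta t+\beta/2>0$ we conclude $P^{n+1}=0$.

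Finally, I would integrate \eqref{3.13} over $\Omega$: the left-hand side integrates to $\tfrac{1}{2}(P^{n+1}+P^n)=0$, while the right-hand side is $\tfrac{1}{\Delta t}\int_\Omega(\phi^{n+1}-\phi^n)\,d\bm{x}$. Equating the two gives $\int_\Omega \phi^{n+1}\,d\bm{x}=\int_\Omega \phi^n\,d\bm{x}$, which is the desired mass conservation. I do not expect any serious obstacle; the only subtle point is making sure the first time step is consistent, which is why the backward-Euler initialization in Remark~\ref{sav-re1} must be integrated separately to start the induction. Note also that the auxiliary variable $u$ and the explicit term $H^{\dagger,n+1}$ play no role here, since mass conservation depends solely on the structure of the dynamical equation for $\psi$ together with the kinematic relation $\psi=\phi_t$.
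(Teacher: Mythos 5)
Your proof is correct and follows essentially the same route as the paper: integrate \eqref{3.15} and \eqref{3.11} over $\Omega$ (periodicity killing the $\Delta\mu$ term), deduce $(\psi^n,1)=0$ for all $n$ by induction from $\psi^0=0$, and then read off $\int_\Omega\phi^{n+1}\,d\bm{x}=\int_\Omega\phi^n\,d\bm{x}$ from the kinematic relations \eqref{3.17} and \eqref{3.13}. The only cosmetic difference is that the paper solves the two-level recurrence explicitly for $(\psi^{n+1},1)$ rather than arguing from $(\alpha/\Delta t+\beta/2)P^{n+1}=0$, which is algebraically the same step.
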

\begin{proof} By taking the $L^{2}$ inner product of \eqref{3.15} with
	1 and using integration by parts, we obtain $(\psi^1,1)=\frac{\alpha}{\alpha+\beta\Delta t}(\psi^0,1).$
	It holds $(\psi^{1}, 1)=0$ since $(\psi^{0}, 1)=0$.	
	Then from \eqref{3.17}, we obtain $(\phi^1,1)=(\phi^0,1).$
	Taking the $L^{2}$ inner product of \eqref{3.11} with 1 and using integration by parts, it holds that
	$$(\psi^{n+1},1)=\frac{2\alpha-\beta\Delta t}{2\alpha+\beta\Delta t}(\psi^{n},1)=0\quad , n\geq1.$$
	Considering \eqref{3.13}, we obtain
	$$(\phi^{n+1}-\phi^n,1)=0 \quad \text{if and only if} \quad \begin{cases} (\psi^{n+1},1)=0 & \text{for } n=0 \\ (\psi^{n+1/2},1)=0 & \text{for } n\geq 1 \end{cases}$$	Therefore,
	\begin{equation}
		\int_{\Omega}\phi^{n+1}dx=\int_{\Omega}\phi^{n}dx=\cdots=\int_{\Omega}\phi^{0}dx\  \text{for any } n,
	\end{equation}
	which implies the mass conservation property of the scheme \eqref{3.11}-\eqref{3.14}.
\end{proof}

\begin{theorem}\label{sav-th2}The second-order S-SAV-CN scheme \eqref{3.11}-\eqref{3.14} is  uniquely  solvable and unconditionally energy stable in the sense that it satisfies the
	following discrete energy dissipation law:
	\begin{equation}\label{3.20}
		E_{CN}^{n+1}(\phi^{n+1},u^{n+1})\leq E_{CN}^{n}(\phi^{n},u^{n})-\delta t\frac{\beta}{M}\|\frac{\psi^{n+1}+\psi^{n}}{2}\|_{-1}^{2},
	\end{equation}
	where, for any integer $k\geq0$, the discrete energy $E_{CN}^{n}(\phi^{n},u^{n})$ is defined as
	\begin{equation}\label{3.21}
		E_{CN}^{n}(\phi^{n},u^{n})=\frac{1}{2}(\Delta+1)^{2}\|\phi^{n}\|^{2}+\frac{S}{2}\|\phi^{n}-\phi^{n-1}\|^{2}+(u^{n})^{2}+\frac{\alpha}{2M}\|\psi^{n}\|_{-1}^{2}.
	\end{equation}
\end{theorem}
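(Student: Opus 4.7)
The argument splits naturally into (i) unique solvability of the linear step and (ii) the discrete energy inequality \eqref{3.20}. The energy part mirrors the continuous derivation \eqref{2.7}--\eqref{2.8}, while the solvability part follows from a standard SAV reduction to a scalar equation in $\phi^{n+1}$.

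For unique solvability, I plan to eliminate $\psi^{n+1}$ via \eqref{3.13} and $u^{n+1}$ via \eqref{3.14}, so that each becomes an explicit affine function of $\phi^{n+1}-\phi^{n}$. Substituting into \eqref{3.12} and combining with $\Delta^{-1}$ applied to \eqref{3.11}---legitimate because both sides lie in $L_{0}^{2}(\Omega)$ by Theorem \ref{sav-th1}---reduces the system to a scalar equation of the schematic form
\begin{equation*}
\mathcal{A}\phi^{n+1} + \gamma\, H^{\dagger,n+1}\bigl(H^{\dagger,n+1},\phi^{n+1}\bigr) = g^{n},
\end{equation*}
where $\mathcal{A} = S + \tfrac{1}{2}(\Delta+1)^{2} - C\Delta^{-1}$ (with $C=\tfrac{2\alpha}{M\Delta t^{2}} + \tfrac{\beta}{M\Delta t}>0$) is symmetric and positive definite on the zero-mean subspace, and $\gamma>0$. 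Testing the homogeneous version against $v$ yields $(\mathcal{A}v,v) + \gamma(H^{\dagger,n+1},v)^{2} = 0$, forcing $v=0$ and hence unique solvability.

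For the energy inequality, I apply $\Delta^{-1}$ to \eqref{3.11}, substitute the resulting expression for $\mu^{n+1/2}$ into \eqref{3.12}, and test against $\phi^{n+1}-\phi^{n} = \Delta t\,\psi^{n+1/2}$. Using $(\Delta^{-1}a,a) = -\|a\|_{-1}^{2}$ and the telescoping identity $(\psi^{n+1}-\psi^{n},\psi^{n+1/2})_{-1} = \tfrac{1}{2}(\|\psi^{n+1}\|_{-1}^{2}-\|\psi^{n}\|_{-1}^{2})$, the left side collapses to $-\tfrac{\alpha}{2M}(\|\psi^{n+1}\|_{-1}^{2}-\|\psi^{n}\|_{-1}^{2}) - \tfrac{\beta\Delta t}{M}\|\psi^{n+1/2}\|_{-1}^{2}$. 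On the right side, three contributions must be handled: the biharmonic term telescopes by self-adjointness of $(\Delta+1)^{2}$ to $\tfrac{1}{2}(\|(\Delta+1)\phi^{n+1}\|^{2}-\|(\Delta+1)\phi^{n}\|^{2})$; the SAV coupling $(H^{\dagger,n+1}u^{n+1/2},\phi^{n+1}-\phi^{n})$ becomes $(u^{n+1})^{2}-(u^{n})^{2}$ after invoking \eqref{3.14}; and, writing $d^{k+1} := \phi^{k+1}-\phi^{k}$, the stabilization term expands via the polarization identity to $\tfrac{S}{2}(\|d^{n+1}\|^{2}-\|d^{n}\|^{2}+\|d^{n+1}-d^{n}\|^{2})$. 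Collecting these produces the sharp identity
\begin{equation*}
E_{CN}^{n+1} - E_{CN}^{n} = -\tfrac{\beta\Delta t}{M}\|\psi^{n+1/2}\|_{-1}^{2} - \tfrac{S}{2}\|d^{n+1}-d^{n}\|^{2},
\end{equation*}
and discarding the non-positive stabilization residual yields \eqref{3.20}.

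The main technical care is in keeping the $H^{-1}$ calculus consistently on $L_{0}^{2}(\Omega)$, where $\Delta^{-1}$ and $\|\cdot\|_{-1}$ are well-defined; Theorem \ref{sav-th1} is essential precisely here. A small separate point is the base step $n=0$, for which $\phi^{-1}$ is unavailable: the first-order scheme of Remark \ref{sav-re1} replaces $\phi^{\star,1}$ by $\phi^{0}$, so the stabilization becomes $Sd^{1}$ and the same inner-product argument delivers the initial case with a simpler residual $-\tfrac{S}{2}\|d^{1}\|^{2}$, completing the induction.
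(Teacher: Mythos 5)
Your proposal is correct and follows essentially the same route as the paper: the energy inequality is obtained exactly as in the paper's proof (apply $\Delta^{-1}$ to \eqref{3.11}, test against $\phi^{n+1}-\phi^{n}$, telescope the biharmonic, SAV, and stabilization terms, and drop the nonnegative residual $\tfrac{S}{2}\|\phi^{n+1}-2\phi^{n}+\phi^{n-1}\|^{2}$), with matching constants since $\tfrac{\beta\delta t}{4M}\|\psi^{n+1}+\psi^{n}\|_{-1}^{2}=\tfrac{\beta\delta t}{M}\|\psi^{n+1/2}\|_{-1}^{2}$. For solvability the paper instead constructs the solution explicitly by applying $P^{-1}$ and solving a Sherman--Morrison-type scalar equation for $(H^{\dagger,n+1},\phi^{n+1})$, but this rests on the same positive-definiteness fact as your coercivity/injectivity argument, so the two are equivalent in substance.
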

\begin{proof} We begin by proving the uniqueness of the solution for the S-SAV-CN scheme defined by equations \eqref{3.11}-\eqref{3.14}. We rewrite \eqref{3.13} and \eqref{3.14} as follows:
	\begin{equation}
		\psi^{n+1}=\frac{2}{\delta t}\phi^{n+1}+g_{1}^{n}  \ , \ u^{n+1}=\frac{1}{2}\int_{\Omega}H^{\dagger, n+1}\phi^{n+1}dx+g_{2}^{n},
	\end{equation}
	where
	$g_{1}^{n}=-\frac{2}{\delta t}\phi^{n}-\psi^{n}$, $g_{2}^{n}=-\frac{1}{2}\int_{\Omega}H^{\dagger, n+1}\phi^{n}dx+u^{n}.$ Then, \eqref{3.11} and \eqref{3.12} can be written as:
	\begin{equation}\label{3.23}
		P(\phi^{n+1})-\frac{1}{4}\Delta H^{\dagger,n+1}\int_{\Omega}H^{\dagger,n+1}\phi^{n+1}dx=\widetilde{g}^{n},
	\end{equation}
	where $P(\phi)=\frac{2}{M\delta t}(\frac{\alpha}{\delta t}+\frac{\beta}{2})\phi-\Delta(\Delta+1)^{2}\frac{\phi}{2}-S\Delta\phi,$ and	
	$$\widetilde{g}^{n}=\psi^{n}(\frac{\alpha}{M\delta t}-\frac{\beta}{2M})+\Delta(\Delta+1)^{2}\frac{\phi_{}^{n}}{2}-(\frac{\alpha}{M\delta t}+\frac{\beta}{2M})g_{1}^{n}+\frac{1}{2}\Delta H^{\dagger,n+1}g_{2}^{n}-S\Delta\phi^{\star,n+1}.$$
	We define a linear operator $P^{-1}$
	such that for any $f\in L^{2}(\Omega)$, $\phi=P^{-1}(f)$ is defined by $P(\phi)=f$. Applying $P^{-1}$ to both sides of \eqref{3.23} readily yields:
	\begin{equation}\label{3.24}
		\phi^{n+1}-\frac{1}{4}P^{-1}(\Delta H^{\dagger,n+1})\int_{\Omega}H^{\dagger,n+1}\phi^{n+1}dx=P^{-1}(\widetilde{g}^{n}).
	\end{equation}
	Taking the $L^{2}$ inner product of both sides of \eqref{3.24} with $H^{\dagger,n+1}$, we get
	\begin{equation}
		(H^{\dagger,n+1},\phi^{n+1})-\frac{1}{4}(P^{-1}(\Delta H^{\dagger,n+1}),H^{\dagger,n+1})\int_{\Omega}H^{\dagger,n+1}\phi^{n+1}dx=(P^{-1}(\widetilde{g}^{n}),H^{\dagger,n+1}).
	\end{equation}
	It follows that
	\begin{equation}
		(H^{\dagger,n+1},\phi^{n+1})=\frac{(P^{-1}(\widetilde{g}^{n}),H^{\dagger,n+1})}{1-\frac{1}{4}(P^{-1}(\Delta H^{\dagger,n+1}),H^{\dagger,n+1})}.
	\end{equation}
	where $P^{-1}(-\Delta)$ is positive definite operator. To summarize, the S-SAV-CN scheme \eqref{3.11}-\eqref{3.14} can be easily implemented as follows:
	\begin{itemize}
		\item[(i)] We compute ($P^{-1}(\widetilde{g}^{n}),H^{\dagger,n+1})$. This requires solving a sixth order equation
		$(P^{-1}(\widetilde{g}^{n}))=\psi_{1}$ with the periodic boundary conditions;
		\item[(ii)]  We compute $(P^{-1}(\Delta H^{\dagger,n+1}),H^{\dagger,n+1})$. This requires solving a sixth order equation $(P^{-1}(\Delta H^{\dagger,n+1}))=\psi_{2}$ with the periodic boundary conditions;
		\item [(iii)] We compute $\phi^{n+1}$ from \eqref{3.24} and $\psi^{n+1}$ from \eqref{3.13}, given the known values of $(H^{\dagger,n+1},\phi^{n+1})$, $\psi_{1},$ and $\psi_{2}$.
	\end{itemize}
	Hence, instead of solving equations that might need some costly iterative solvers \cite{32pei2022efficient}, the total cost of solving scheme
	\eqref{3.11}-\eqref{3.14} at each time step is just solving two decoupled sixth-order linear equations with constant coefficients.	
	Next, we  prove the unconditional energy stability of the scheme \eqref{3.11}-\eqref{3.14} as follows.
	By applying $\frac{1}{M}\Delta^{-1}$ to \eqref{3.11} and using \eqref{3.12}, we can obtain
	\begin{equation}\label{3.27}
		\frac{\alpha}{M\delta t}\Delta^{-1}(\psi^{n+1}-\psi^{n})+\frac{\beta}{2M}\Delta^{-1}(\psi^{n+1}+\psi^{n})=(\Delta+1)^{2}\frac{\phi^{n+1}+\phi^{n}}{2}+H^{\dagger ,n+1}\frac{u^{n+1}+u^{n}}{2}+S(\phi^{n+1}-\phi^{\star ,n+1}).			
	\end{equation}
	By taking the $L^{2}$ inner product of the above equation with $\phi^{n+1}-\phi^{n}$, we can obtain
	\begin{equation}\label{3.28}
		\begin{split}
			&(\frac{\alpha}{M\delta t}\Delta^{-1}(\psi^{n+1}-\psi^{n})+
			\frac{\beta}{2M}\Delta^{-1}(\psi^{n+1}+\psi^{n}),\phi^{n+1}-\phi^{n})=\\
			&((\Delta+1)^{2}\frac{\phi^{n+1}+\phi^{n}}{2},\phi^{n+1}-\phi^{n})+(H^{\dagger,n+1}\frac{u^{n+1}+u^{n}}{2},\phi^{n+1}-\phi^{n})+S(\phi^{n+1}-\phi^{\star,n+1},\phi^{n+1}-\phi^{n}).
		\end{split}
	\end{equation}
	Using integration by parts and \eqref{3.13}, the left term of \eqref{3.28} can be rewritten as
	\begin{equation}\label{3.29}
		\begin{split}
			&(\frac{\alpha}{M\delta t}\Delta^{-1}(\psi^{n+1}-\psi^{n})+\frac{\beta}{2M}\Delta^{-1}(\psi^{n+1}+\psi^{n}),\phi^{n+1}-\phi^{n})\\	
			&=-\frac{\alpha}{2M}(-\Delta^{-1}(\psi^{n+1}-\psi^{n}),\psi^{n+1}+\psi^{n})-\frac{\beta \delta t}{4M}(-\Delta^{-1}(\psi^{n+1}+\psi^{n}),\psi^{n+1}+\psi^{n})\\
			&=-\frac{\alpha}{2M}(\|\psi^{n+1}\|_{-1}^{2}-\|\psi^{n}\|_{-1}^{2})-\frac{\beta \delta t}{4M}\|\psi^{n+1}+\psi^{n}\|_{-1}^{2}.
		\end{split}
	\end{equation}
	Using integration by parts, \eqref{3.14}, and the identity $(a-2b+c)(a-b)=\frac{1}{2}((a-b)^{2}-(b-c)^{2}+(a-2b+c)^{2})$, the right term of \eqref{3.28} can be rewritten as
	\begin{equation}\label{3.30}
		\begin{split}
			&((\Delta+1)^{2}\frac{\phi^{n+1}+\phi^{n}}{2},\phi^{n+1}-\phi^{n})+(H^{\dagger ,n+1}\frac{u^{n+1}+u^{n}}{2},\phi^{n+1}-\phi^{n})+S(\phi^{n+1}-\phi^{\star ,n+1},\phi^{n+1}-\phi^{n})\\
			&=\frac{1}{2}(\Delta+1)^{2}(\|\phi^{n+1}\|^{2}-\|\phi^{n}\|^{2})+((u^{n+1})^{2}-(u^{n})^{2})\\
			&+\frac{S}{2}(\|\phi^{n+1}-\phi^{n}\|^{2}-\|\phi^{n}-\phi^{n-1}\|^{2}+\|\phi^{n+1}-2\phi^{n}+\phi^{n-1}\|^{2}),\\
		\end{split}
	\end{equation}
	Combining the \eqref{3.28}-\eqref{3.30}, we arrive at
	\begin{equation}
		\begin{split}
			&\frac{S}{2}(\|\phi^{n+1}-\phi^{n}\|^{2}-\|\phi^{n}-\phi^{n-1}\|^{2}+\|\phi^{n+1}-2\phi^{n}+\phi^{n-1}\|^{2})	+\frac{(\Delta+1)^{2}}{2}(\|\phi^{n+1}\|^{2}-\|\phi^{n}\|^{2})+((u^{n+1})^{2}-(u^{n})^{2})\\
			&+\frac{\alpha}{2M}(\|\psi^{n+1}\|_{-1}^{2}-\|\psi^{n}\|_{-1}^{2})=-\frac{\beta \delta t}{4M}\|\psi^{n+1}+\psi^{n}\|_{-1}^{2},
		\end{split}
	\end{equation}
	which implies the desired result \eqref{3.20} after we drop a positive term.
\end{proof}

\subsection{The S-GPAV-CN scheme}
In this subsection, inspired by the generalized positive auxiliary
variable (GPAV) numerical approach \cite{38yang2020roadmap,27qian2025error,44qian2025decoupled,46qian2023stability}, we propose a new, linear, and unconditionally stable scheme for the VMPFC model. 
The principal computational advantage is that the number of equations to be solved per time step is half that of the SAV method \eqref{3.11}-\eqref{3.14}.
We redefine the energy functional and introduce the following auxiliary variable:
\begin{equation}\label{3.32}
	R(t) = \sqrt{E_{1}(t)},
\end{equation}
where
\begin{equation}\label{3.33}
	E_{1}(t) = E_{1}[\phi,\psi] = \widetilde{E}(\phi,\psi) + \int_{\Omega} c_0 dx = \int_{\Omega} \left( \frac{1}{2} |(\Delta+1)\phi|^2  + F(\phi)+ F_{vac}(\phi) + c_0 \right) dx + \frac{\alpha}{2M} \|\psi\|_{-1}^2.
\end{equation}
The constant $c_0$ is chosen such that $E_{1}(t) > 0$ for $0 \leq t \leq T$. By taking derivative of \eqref{3.32} with respect to t and considering \eqref{2.9}, we obtain the following energy equation:
\begin{equation}\label{3.34}
	\frac{dR}{dt} = \frac{1}{2R} \frac{dE_{1}}{dt} = \frac{1}{2R} \frac{d\widetilde{E}(\phi,\psi)}{dt} = -\frac{\beta}{2MR} \|\psi\|_{-1}^2 = -\frac{\beta}{2M\sqrt{E_{1}}} \|\psi\|_{-1}^2.
\end{equation}
According to the above definition of $R(t)$ in \eqref{3.32}, we can define $\xi = \frac{R}{\sqrt{E_{1}}}$.
It is easy to obtain that $\xi\equiv1$  in the time-continuous case. The system \eqref{2.2}-\eqref{2.3} can be reformulated as the following equivalent system:
\begin{equation}\label{3.35}
	\alpha \psi_t + \beta \psi = M \Delta \mu, 
\end{equation}
\begin{equation}
	\mu = (\Delta+1)^2 \phi + \xi(f(\phi)+ f_{vac}(\phi)),
\end{equation}
\begin{equation}	
	\psi = \phi_t, 
\end{equation}
\begin{equation}\label{3.38}
	\frac{dR}{dt} = -\frac{\beta \xi}{2M\sqrt{E_{1}}} \|\psi\|_{-1}^2.
\end{equation}
The boundary conditions of the system \eqref{3.35}-\eqref{3.38} are the same as the original system \eqref{2.2}-\eqref{2.3} and the initial conditions are given by
\begin{equation}\label{3.39}
	\phi(\bm{x},0)=\phi^{0},\psi(\bm{x},0)=\psi^{0}=0,\quad R(t=0) = \sqrt{E_{1}(0)} = \sqrt{E_{1}[\phi^{0}, \psi^{0}]}.
\end{equation}
The auxiliary variable $R$ in ODE \eqref{3.38}, which approximates the energy \eqref{3.33}, obeys the dissipation law \eqref{3.34}. 
The dynamics of $R(t)$ are closely related to the pseudo energy dissipation law \eqref{2.8}.
As a result, the pseudo energy functional $\widetilde{E}(\phi, \psi)$ in \eqref{2.9} admits a reformulation as the modified energy:
\begin{equation}\label{3.40}
	\mathcal{E}(R) = R^2 - \int_{\Omega} c_0 dx.
\end{equation}

Next, The above equivalent system \eqref{3.35}-\eqref{3.38} is very easy to construct a linear, second-order, sequentially solved and unconditionally stable semi-implicit scheme. 
Given $(\phi^n, \psi^n, R^n)$ and $(\phi^{n-1}, \psi^{n-1}, R^{n-1})$ ($n \geq 1$), ($\phi^{n+1}, \psi^{n+1}$, $R^{n+1}$) can be obtained via the following second-order S-GPAV-CN scheme:
\begin{equation}\label{3.41}
	\alpha\frac{\psi^{n+1}-\psi^n}{\Delta t} + \beta \psi^{n+1/2} = M \Delta \mu^{n+1/2},
\end{equation}	
\begin{equation}\label{3.42}
	\mu^{n+1/2} = (\Delta + 1)^2 \phi^{n+1/2} + \xi_1^{n+1/2} (f(\phi^{\dagger, n+1})+ f_{vac}(\phi^{\dagger, n+1}))+S(\phi^{n+1}-\phi^{\star ,n+1}),
\end{equation}	
\begin{equation}\label{3.43}
	\psi^{n+1/2}=\frac{\phi^{n+1}-\phi^{n}}{\Delta t},
\end{equation}	
\begin{equation}\label{3.44}	
	\frac{R^{n+1}-R^n}{\Delta t} = -\frac{\beta \xi_2^{n+1}}{2M \sqrt{E_{1}[\phi^{n+1/2}, \psi^{n+1/2}]}} ||\psi^{n+1/2}||_{-1}^2,
\end{equation} 	
where $\xi_1^{n+1/2}$ and $\xi_2^{n+1}$ are second-order approximations of $\frac{R(t)}{\sqrt{E_{1}(t)}} = 1$ at $t = (n+1/2)\Delta t$ and $t = (n+1)\Delta t$, respectively, defined by
\begin{equation}\label{3.45}	
	\xi_1^{n+1/2} = \frac{{R}^{\dagger,n+1}}{\sqrt{E_{1}[\phi^{\dagger,n+1}, \psi^{\dagger,n+1}]}}, \xi_2^{n+1} = \frac{R^{n+1}}{\sqrt{E_{1}[\phi^{n+1}, \psi^{n+1}]}}.
\end{equation} 
Noting that the $\phi^{\dagger, n+1}$ is any explicit $O(\Delta t^2)$ approximation for $\phi(t^{n+\frac{1}{2}})$ and $R^{\dagger, n+1}$ \eqref{3.45} is any explicit $O(\Delta t^2)$ approximation for $R(t^{n+1/2})$.

\begin{remark}\label{sav-re3}
	For $n=0$, the values of ($\phi^{1}$, $\psi^{1}$, $R^{1}$) can be obtained by the first-order scheme:
\end{remark}

\begin{equation}\label{3.46}
	\alpha\frac{\psi^1 - \psi^0}{\Delta t} + \beta\psi^1 = M\Delta\mu^1,
\end{equation} 	
\begin{equation}\label{3.47}	
	\mu^1 = (\Delta+1)^2\phi^1 + \frac{R^0}{\sqrt{E_{1}[\phi^0,\psi^0]}}(f(\phi^{0})+ f_{vac}(\phi^{0}))+S(\phi^{1}-\phi^{0}), 
\end{equation}  
\begin{equation}\label{3.48}	
	\frac{\phi^1 - \phi^0}{\Delta t} = \psi^1, 
\end{equation} 	
\begin{equation}\label{3.49}	
	\frac{R^1 - R^0}{\Delta t} = -\frac{\beta R^1}{2ME_{1}[\phi^1,\psi^1]}\Vert\psi^1\Vert_{-1}^2.
\end{equation} 
\begin{theorem}\label{sav-th3}
	The S-GPAV-CN scheme \eqref{3.41}- \eqref{3.44} maintains mass conservation as 
	\begin{equation}
		\int_{\Omega}\phi^{n+1}dx=\int_{\Omega}\phi^{n}dx=\cdots=\int_{\Omega}\phi^{0}dx\  \text{for any } n.
	\end{equation}
\end{theorem}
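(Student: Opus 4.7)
The plan is to mirror the strategy used in the proof of Theorem~\ref{sav-th1}, since the GPAV scheme shares the same structural form for the $\psi$ and $\phi$ equations as the S-SAV-CN scheme; only the nonlinear potential term in the chemical potential has been replaced. The mass conservation argument relies exclusively on testing equations \eqref{3.41} and \eqref{3.43} (and the analogous first-order versions \eqref{3.46}, \eqref{3.48}) against the constant function $1$, so the specific form of $\mu^{n+1/2}$ will be irrelevant beyond the fact that it produces a Laplacian that vanishes upon integration by parts under the periodic boundary conditions.

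First I would handle the initialization step $n=0$. Taking the $L^2$ inner product of \eqref{3.46} with $1$ and integrating by parts kills the term $M\Delta\mu^1$, yielding $\alpha(\psi^1-\psi^0,1)/\Delta t + \beta(\psi^1,1) = 0$. Since $(\psi^0,1)=0$ by the initial condition \eqref{3.39}, this forces $(\psi^1,1)=0$. Then \eqref{3.48} immediately gives $(\phi^1,1)=(\phi^0,1)$.

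Next, for the inductive step $n\geq 1$, I would take the $L^2$ inner product of \eqref{3.41} with $1$. Again integration by parts eliminates the right-hand side under periodic (or Neumann) boundary conditions, and the resulting identity is
\begin{equation*}
\frac{2\alpha+\beta\Delta t}{2\Delta t}(\psi^{n+1},1) = \frac{2\alpha-\beta\Delta t}{2\Delta t}(\psi^n,1).
\end{equation*}
An induction on $n$, seeded by $(\psi^1,1)=0$, shows that $(\psi^{n+1},1)=0$ for all $n\geq 0$. Combining this with \eqref{3.43} yields $(\phi^{n+1}-\phi^n,1) = \Delta t(\psi^{n+1/2},1) = \Delta t\bigl((\psi^{n+1},1)+(\psi^n,1)\bigr)/2 = 0$, which telescopes to the desired equality $\int_\Omega \phi^{n+1}\,dx = \int_\Omega \phi^0\,dx$.

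There is essentially no substantial obstacle here; the proof is almost verbatim the one for Theorem~\ref{sav-th1}. The only point to verify with a little care is that the stabilization term $S(\phi^{n+1}-\phi^{\star,n+1})$ and the nonlinear term $\xi_1^{n+1/2}(f(\phi^{\dagger,n+1})+f_{vac}(\phi^{\dagger,n+1}))$ sit inside $\mu^{n+1/2}$ and are therefore wiped out by the outer Laplacian through integration by parts — so they never contribute to the mass balance. The auxiliary-variable equation \eqref{3.44} plays no role in the mass conservation argument and can be ignored.
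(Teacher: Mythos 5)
Your proof is correct and follows exactly the route the paper intends: the paper's own proof of Theorem~\ref{sav-th3} simply states that it is ``similar to Theorem~\ref{sav-th1},'' and your argument is precisely that adaptation — testing \eqref{3.46} and \eqref{3.41} against $1$ so the Laplacian of $\mu$ vanishes under periodic boundary conditions, deducing $(\psi^{n+1},1)=0$ by induction, and then using \eqref{3.48} and \eqref{3.43} to telescope the mean of $\phi$. Your observation that the GPAV-specific terms ($\xi_1^{n+1/2}$, the stabilization term, and the $R$-equation \eqref{3.44}) play no role is the correct justification for why the argument carries over verbatim.
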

\begin{proof} 
	The proof is similar to Theorem~\ref{sav-th1}
\end{proof}
\begin{theorem}\label{sav-th4}The second-order S-GPAV-CN scheme \eqref{3.41}-\eqref{3.44} admits a unique solution. Given $R^0 > 0$, it is unconditionally energy stable for all time steps $n$, in the following sense
	\begin{equation}	
		0 < R^{n+1} \leq R^n.
	\end{equation} 
\end{theorem}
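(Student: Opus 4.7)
The plan is to split the argument into two independent steps: unique solvability of the coupled field equations for $(\phi^{n+1},\psi^{n+1})$, followed by the positivity and monotonicity of the scalar update \eqref{3.44} for $R^{n+1}$. The key structural observation that makes this clean is that the GPAV formulation confines all coupling between $R$ and the fields to the scalars $\xi_1^{n+1/2}$ and $\xi_2^{n+1}$, where $\xi_1^{n+1/2}$ is fully explicit and $\xi_2^{n+1}$ enters only linearly in the $R$ update.

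For unique solvability, I would first note that $\xi_1^{n+1/2}$ defined in \eqref{3.45} depends only on time levels $\leq n$ and is therefore a known constant in \eqref{3.42}. Eliminating $\psi^{n+1}$ via $\psi^{n+1}=\tfrac{2}{\Delta t}(\phi^{n+1}-\phi^n)-\psi^n$ from \eqref{3.43} and inserting \eqref{3.42} into \eqref{3.41} reduces the three equations to a single sixth-order linear elliptic equation for $\phi^{n+1}$ with periodic boundary conditions,
\begin{equation*}
\Big(\tfrac{2\alpha}{M\Delta t^2}+\tfrac{\beta}{M\Delta t}\Big)\phi^{n+1}-\tfrac{1}{2}\Delta(\Delta+1)^2\phi^{n+1}-S\Delta\phi^{n+1}=g^n,
\end{equation*}
whose right-hand side $g^n$ involves only data at time levels $\leq n$. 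Coercivity of the associated bilinear form is manifest: testing against $\phi^{n+1}$ yields
$\big(\tfrac{2\alpha}{M\Delta t^2}+\tfrac{\beta}{M\Delta t}\big)\|\phi^{n+1}\|^2+\tfrac{1}{2}\|(\Delta+1)\nabla\phi^{n+1}\|^2+S\|\nabla\phi^{n+1}\|^2$, which is strictly positive on the mean-zero subspace (the zeroth mode is pinned by the mass conservation of Theorem~\ref{sav-th3}). A Lax--Milgram argument then yields a unique $\phi^{n+1}$, and $\psi^{n+1}$ follows explicitly from \eqref{3.43}.

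With $(\phi^{n+1},\psi^{n+1})$ determined, substituting the definition of $\xi_2^{n+1}$ from \eqref{3.45} into \eqref{3.44} collapses the scheme into a scalar linear identity for $R^{n+1}$,
\begin{equation*}
R^{n+1}\Bigg(1+\frac{\beta \Delta t\,\|\psi^{n+1/2}\|_{-1}^2}{2M\,\sqrt{E_1[\phi^{n+1},\psi^{n+1}]}\,\sqrt{E_1[\phi^{n+1/2},\psi^{n+1/2}]}}\Bigg)=R^n.
\end{equation*}
By the choice of the shift constant $c_0$ in \eqref{3.33}, both $E_1$ evaluations are strictly positive, so the bracketed factor is real and $\geq 1$. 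Hence $R^n>0$ implies $R^{n+1}>0$ and $R^{n+1}\leq R^n$; induction from $R^0>0$ then extends the estimate to every $n$.

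The main obstacle is neither step individually but rather the careful bookkeeping needed to assemble $g^n$ in the elliptic equation above: it must correctly absorb the explicit nonlinear term $\xi_1^{n+1/2}(f(\phi^{\dagger,n+1})+f_{vac}(\phi^{\dagger,n+1}))$, the stabilization shift $S\phi^{\star,n+1}$, the Crank--Nicolson history contribution $\tfrac{1}{2}\Delta(\Delta+1)^2\phi^n$, and the $\psi^n$ data from \eqref{3.41}, while preserving the coercivity and periodic structure. Once this decoupling is cleanly laid out, the remaining estimates and the scalar algebra for $R^{n+1}$ are routine.
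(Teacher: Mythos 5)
Your proposal is correct and follows essentially the same route as the paper: reduce \eqref{3.41}--\eqref{3.43} to a single positive-definite sixth-order constant-coefficient elliptic equation for $\phi^{n+1}$ (the explicitness of $\xi_1^{n+1/2}$ being the key decoupling observation), then substitute $\xi_2^{n+1}$ into \eqref{3.44} to obtain exactly the scalar identity $R^{n+1}\bigl(1+\tfrac{\beta\Delta t\,\|\psi^{n+1/2}\|_{-1}^2}{2M\sqrt{E_1[\phi^{n+1},\psi^{n+1}]}\sqrt{E_1[\phi^{n+1/2},\psi^{n+1/2}]}}\bigr)=R^n$ and conclude by induction. The only cosmetic differences are your (unnecessary but harmless) restriction to the mean-zero subspace for coercivity, where the strictly positive zeroth-order term already suffices, and the paper's explicit treatment of the first-order startup step \eqref{3.49} as the induction base.
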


\begin{proof} We first prove that the S-GPAV-CN scheme \eqref{3.41}-\eqref{3.44} admits a unique solution. The value of $E_{1}[\phi^{\dagger,n+1}, \psi^{\dagger,n+1}]$  can be computed using \eqref{3.33}. Subsequently, we compute $R^{\dagger,n+1}$ and $\xi_1^{n+1/2}$ from \eqref{3.32} and  \eqref{3.45}.
	\eqref{3.41}-\eqref{3.42} can be rewritten as
	\begin{equation}
		\left(\alpha+\frac{\Delta t\beta}{2}\right)\frac{2}{\Delta t}\phi^{n+1}=\hat{g}^n+\Delta tM\Delta\mu^{n+\frac{1}{2}},
	\end{equation}
	\begin{equation}
		\mu^{n+\frac{1}{2}}=(1+\Delta)^{2}\phi^{n+\frac{1}{2}}+\xi_1^{n+1/2}(f(\phi^{\dagger, n+1})+ f_{vac}(\phi^{\dagger, n+1}))+S(\phi^{n+1}-\phi^{\star ,n+1}),
	\end{equation}
	where $\hat{g}^{n}= 2\alpha \psi^{n}+ (\frac{2\alpha}{\Delta t}+ \beta)\phi^{n}.$ Then, we have
	\begin{equation}\label{3.54}
		\mathcal{M}(\phi^{n+1})= g^n,
	\end{equation}
	where
	\begin{equation}
		\begin{cases}
			\mathcal{M}(\phi) = \left( \alpha + \frac{\Delta t \beta}{2} \right) \frac{2}{\Delta t} \phi - \frac{\Delta t M}{2} \Delta (1 + \Delta)^2 \phi -\Delta t M S\Delta\phi, \\
			g^n = \hat{g}^{n} + \frac{\Delta t M}{2} \Delta (1 + \Delta)^2 \phi^n + \Delta t M\xi_1^{n+1/2} \Delta (f(\phi^{\dagger, n+1})+ f_{vac}(\phi^{\dagger, n+1}))-\Delta t MS\Delta\phi^{\star ,n+1}.
		\end{cases}
	\end{equation}
	Next, once the variable $\xi_1^{n+1/2}$ are determined, $\phi^{n+1}$ can be computed by \eqref{3.54} with periodic boundary conditions.	
	Since the linear operator $\mathcal{M}$ is a positive definite operator, \eqref{3.54} has a unique solution $\phi^{n+1}$.
	Then, $\psi^{n+1/2}$ and $R^{n+1}$ can be obtained directly by \eqref{3.43}-\eqref{3.45}.
	The computation is totally decoupled and only one sixth-order linear equation with constant coefficients needs to be solved at each time step, whereas the classical SAV approach requires solving two. Next, we prove the unconditional energy stability of scheme \eqref{3.41}-\eqref{3.44}. The stability pertains to a discrete modified energy, rather than the original free energy \eqref{2.1} or the pseudo energy \eqref{2.9}. From \eqref{3.49}, one finds
	$$R^1 = \frac{R^0}{1 + \frac{\beta \Delta t}{2M E_{1}[\phi^1, \psi^1]} \| \psi^1 \|_{-1}^2} \leq R^0.$$
	By inserting the second expression from \eqref{3.45} into \eqref{3.44}, one gets\\
	$$
	R^{n+1} = \frac{R^n}{1 + \frac{\beta \Delta t}{2M \sqrt{E_{1}[\phi^{n+1/2}, \psi^{n+1/2}] \sqrt{E_{1}[\phi^{n+1}, \psi^{n+1}]}}} \| \psi^{n+1/2} \|_{-1}^2} \quad n \geq 1.
	$$
	Based on $R^0 > 0$, one can conclude by induction that $R^n > 0$ for all $n$ and $R^{n+1} \leq R^n$.
\end{proof}
\begin{remark}\label{esav-re6}	
	The S-GPAV-CN scheme \eqref{3.41}-\eqref{3.44} can be implemented as follows:
	\begin{enumerate}
		\item[(i)] Compute $\phi^{1}$ from \eqref{3.46}-\eqref{3.47}, $\psi^{1}$ from \eqref{3.48}, $R^{1}$ from \eqref{3.49}, and set n=0;
		\item[(ii)] set n=n+1;
		\item[(iii)]  Compute $\xi_1^{n+1/2}$ from \eqref{3.45}, $\phi^{n+1}$ from \eqref{3.41}-\eqref{3.42}, $\psi^{n+1}$ and $\psi^{n+1/2}$ from \eqref{3.43};
		\item[(iv)] Compute $\xi_2^{n+1}$ from \eqref{3.45}; compute $R^{n+1}$ from \eqref{3.44} and go back to step (ii).
	\end{enumerate}	
\end{remark}
\subsection{The S-ESAV-CN scheme}
In this subsection,  we propose a new and unconditional energy stable scheme for the system \eqref{2.2}-\eqref{2.3} of VMPFC model based on the modified exponential SAV (E-SAV) \cite{39liu2020exponential} approach.
By leveraging the inherent positivity of exponential functions, the improved scheme removes the essential lower-bound restriction required by the SAV and GPAV methods, thus being more suitable for dissipative systems. Specifically, we define a time-dependent variable $B$ with the following form:
\begin{equation}\label{3.56}
	B=B(t)=\exp\left(\frac{\widetilde{E}(\phi)}{C}\right),
\end{equation}
where $C\geq \widetilde{E}(\phi^{0})$. $C$ is a large positive constant used to decelerate the rapid growth of the exponential function and $\widetilde{E}$ is the pseudo energy in \eqref{2.9}. It is obvious that $B> 0$ for any t. 
We introduce the assumption-free exponential scalar auxiliary variable $B$, thereby removing the restriction that the nonlinear part of the total energy must be bounded from below. Then, the nonlinear function $F^{\prime}(\phi)+F_{vac}^{\prime}(\phi)$ can be transformed as:
$F^{\prime}(\phi)+F_{vac}^{\prime}(\phi)=\frac{B}{B}(F^{\prime}(\phi)+F_{vac}^{\prime}(\phi))=\frac{B}{\exp\left(\frac{\widetilde{E}}{C}\right)}(f(\phi)+f_{vac}(\phi)).$
By taking the derivative of \eqref{3.56} with respect to t, we obtain
\begin{equation}\label{3.57}
	\frac{dB}{dt}=\frac{1}{C}\exp\left(\frac{\widetilde{E}(\phi)}{C}\right)\frac{d\widetilde{E}(\phi)}{dt}=-\frac{B\beta}{CM}\|\psi\|_{-1}^{2}.
\end{equation}
Thus, \eqref{2.2}-\eqref{2.3} can be rewritten as the following equivalent system:
\begin{equation}\label{3.58}
	\alpha \psi_{t}+\beta\psi=M\Delta\mu,
\end{equation}
\begin{equation}
	\mu=(\Delta+1)^{2}\phi+\xi (f(\phi)+f_{vac}(\phi)),
\end{equation}
\begin{equation}\label{3.60}
	\psi=\phi_{t},
\end{equation}
\begin{equation}\label{3.61}
	\frac{dB}{dt}=-\frac{B\beta}{CM}\|\psi\|_{-1}^{2},
\end{equation}
where the new variable $\xi=\frac{B}{\exp\left(\frac{\widetilde{E}}{C}\right)}\equiv1$  in the time-continuous
case. The boundary conditions of the system \eqref{3.58}-\eqref{3.61} are the same as the original system \eqref{2.2}-\eqref{2.3} and the initial conditions are given by
\begin{equation}\label{3.62}
	\phi(\bm{x},0)=\phi^{0},\psi(\bm{x},0)=\psi^{0}=0,B(t=0)=B^0=\exp\left(\frac{\widetilde{E}(\phi^{0})}{C}\right).
\end{equation}
It should be noted that \eqref{3.57} is also an ODE system in time, thus $B$ does not require boundary conditions. The variable $B$ computed in \eqref{3.61} serves as an approximation of the energy \eqref{2.9} and obeys the dissipation law \eqref{3.57}. It is obvious that the pseudo energy $\widetilde{E}$ can be reformulated as the following modified energy:
\begin{equation}\label{3.63}
	\mathcal{E}(B)= Cln(B).
\end{equation}

A decoupled, linear, unconditionally energy-stable semi-implicit scheme is introduced to solve the equivalent system \eqref{3.58}-\eqref{3.61} based on the stabilized-ESAV approach and Crank-Nicolson formula. Having computed $(\phi^{n},\psi^{n},B^{n})$ and $(\phi^{n-1}, \psi^{n-1}, B^{n-1})$ for $n\geq1$,  $(\phi^{n+1},\psi^{n+1},B^{n+1})$ can be computed using the following second-order S-ESAV-CN scheme:
\begin{equation}\label{3.64}
	\alpha\frac{\psi^{n+1}-\psi^{n}}{\Delta t}+\beta\psi^{n+\frac{1}{2}}=M\Delta\mu^{n+1/2}
\end{equation}
\begin{equation}\label{3.65}
	\mu^{n+1/2}=(\Delta+1)^{2}\phi^{n+1/2}+\xi^{n+1}( f(\phi^{\dagger, n+1})+ f_{vac}(\phi^{\dagger, n+1}))+S(\phi^{n+1}-\phi^{\star ,n+1}),
\end{equation}
\begin{equation}\label{3.66}
	\psi^{n+1/2}=\frac{\phi^{n+1}-\phi^{n}}{\Delta t},
\end{equation}
\begin{equation}\label{3.67}
	\frac{B^{n+1}-B^n}{\Delta t}=-\frac{\beta B^{n+1}}{CM}\|\psi^{n+\frac{1}{2}}\|_{-1}^{2},
\end{equation}
where 
\begin{equation}\label{3.68}
	\xi^{n+1}=\frac{B^{\dagger, n+1}}{\exp\left(\frac{\widetilde{E}(\phi^{\dagger, n+1})}{C}\right)}.
\end{equation}
The $\phi^{\dagger, n+1}$ is any explicit $O(\Delta t^2)$ approximation for $\phi(t^{n+\frac{1}{2}})$ and $B^{\dagger, n+1}$ in \eqref{3.68} is any explicit $O(\Delta t^2)$ approximation for $B(t^{n+1/2})$.
\begin{remark}
	The second-order S-ESAV-CN scheme \eqref{3.64}-\eqref{3.68} requires $(\phi^1,\psi^1,B^1)$, which we computed by the first-order scheme :
	\begin{equation}\label{3.69}
		\alpha\frac{\psi^{1}-\psi^{0}}{\Delta t}+\beta\psi^{1}=M\Delta\mu^{1},
	\end{equation}
	\begin{equation}\label{3.70}
		\mu^{1}=(\Delta+1)^{2}\phi^{1}+\xi^{1}(f(\phi^{0})+ f_{vac}(\phi^{0}))+S(\phi^{1}-\phi^{0}),
	\end{equation}
	\begin{equation}\label{3.71}
		\psi^{1}=\frac{\phi^{1}-\phi^{0}}{\Delta t},
	\end{equation}
	\begin{equation}\label{3.72}
		\frac{B^{1}-B^{0}}{\Delta t}=-\frac{\beta B^{1}}{CM}\|\psi^{1}\|_{-1}^{2},
	\end{equation}
	where
	\begin{equation}\label{3.73}
		\xi^{1}=\frac{B^{0}}{\exp\left(\frac{\widetilde{E}(\phi^{0})}{C}\right)}.
	\end{equation}	
\end{remark}
\begin{remark}\label{esav-re3} The S-ESAV-CN scheme \eqref{3.64}-\eqref{3.68} can be implemented as follows:
	\begin{enumerate}
		\item[(i)] Compute $\xi^{1}$ from \eqref{3.73}, $\phi^{1}$ from \eqref{3.69}-\eqref{3.70}, $\psi^{1}$ from \eqref{3.71}, $B^{1}$ from \eqref{3.72}, and set n=0;
		\item[(ii)] set n=n+1;
		\item[(iii)] Compute $\xi^{n+1}$ from \eqref{3.68} and $\phi^{n+1}$ from \eqref{3.64}-\eqref{3.65};
		\item[(iv)]   Compute $\psi^{n+1}$, $\psi^{n+1/2}$ from \eqref{3.66}; compute $B^{n+1}$ from \eqref{3.67}; go back to step (ii).
	\end{enumerate}	
\end{remark}
We observe that $\phi^{n+1}$ and $B^{n+1}$ can aslo be obtained by solving only one sixth-order linear equation with constant coefficients, thereby improving the computational efficiency compared to the S-SAV method.
\begin{theorem}
	The S-ESAV-CN scheme \eqref{3.64}-\eqref{3.68} maintains mass conservation as 
	\begin{equation}
		\int_{\Omega}\phi^{n+1}dx=\int_{\Omega}\phi^{n}dx=\cdots=\int_{\Omega}\phi^{0}dx\  \text{for any } n.
	\end{equation}
\end{theorem}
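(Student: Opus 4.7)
The plan is to mimic the proof of Theorem~\ref{sav-th1} essentially verbatim, exploiting the observation that pairing with the constant function $1$ annihilates every divergence-form term on the right-hand side of the momentum equations under periodic boundary conditions, so the scalar ODE \eqref{3.67} and the modifier $\xi^{n+1}$ never participate in the argument.

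First I would dispatch the start-up step. Taking the $L^2$ inner product of \eqref{3.69} with $1$ and using integration by parts (or equivalently the divergence theorem with periodic boundary data), the term $(M\Delta \mu^1, 1)$ vanishes identically, and I am left with $\alpha(\psi^1, 1) + \beta \Delta t (\psi^1, 1) = \alpha(\psi^0, 1)$. The initial condition $\psi^0 = 0$ from \eqref{3.62} forces $(\psi^1, 1) = 0$. Pairing \eqref{3.71} with $1$ then yields $(\phi^1 - \phi^0, 1) = \Delta t (\psi^1, 1) = 0$, so mass is conserved across the first step.

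Next I would carry out the induction for $n \ge 1$. Taking $(\cdot, 1)$ of \eqref{3.64} produces, after integration by parts, the two-step recursion $(2\alpha + \beta\Delta t)(\psi^{n+1}, 1) = (2\alpha - \beta\Delta t)(\psi^n, 1)$; by the inductive hypothesis $(\psi^n, 1) = 0$, hence $(\psi^{n+1}, 1) = 0$ as well. Because $\psi^{n+1/2}$ is the half-sum of $\psi^{n+1}$ and $\psi^n$, it also has zero mean, and so equation \eqref{3.66} gives $(\phi^{n+1}, 1) = (\phi^n, 1)$, closing the induction and producing the chain of equalities in the statement.

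I do not expect any real obstacle: the only structural feature being used is that $\mu^{n+1/2}$ appears only through $\Delta \mu^{n+1/2}$ in the dynamic equation, so the concrete form of $\mu^{n+1/2}$ in \eqref{3.65} (including the ESAV scalar $\xi^{n+1}$, the explicit nonlinearity at $\phi^{\dagger,n+1}$, and the stabilization term) is irrelevant to the mass balance. In the actual write-up I would therefore either reproduce the short argument above or simply refer the reader to Theorem~\ref{sav-th1}, noting that the proof carries over without change.
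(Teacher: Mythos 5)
Your proposal is correct and follows exactly the argument the paper intends: the paper's own proof of this theorem is simply the remark that it is similar to Theorem~\ref{sav-th1}, and your adaptation (pairing \eqref{3.69} and \eqref{3.64} with $1$, using $\psi^0=0$ and induction on $(\psi^n,1)=0$, then transferring to $\phi$ via \eqref{3.71} and \eqref{3.66}) is precisely that argument carried over to the S-ESAV-CN equations. No gaps.
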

\begin{proof} 
	The proof is similar to Theorem~\ref{sav-th1}
\end{proof}
\begin{theorem}
	The second-order S-ESAV-CN scheme \eqref{3.64}-\eqref{3.68} admits a unique solution and is unconditionally energy stable in the sense that
	\begin{equation}
		C \ln(B^{n+1}) \leq C \ln(B^n)\  \text{for any n}.
	\end{equation}
\end{theorem}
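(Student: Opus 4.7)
The plan is to mirror the strategy used for Theorem~\ref{sav-th4}, exploiting the fact that the exponential auxiliary variable $B$ decouples from the $\phi$-equation once the extrapolated quantity $\xi^{n+1}$ is fixed. First I would treat unique solvability. Because $\phi^{\dagger,n+1}$ and $B^{\dagger,n+1}$ are purely explicit $O(\Delta t^2)$ extrapolations of previously computed data, $\xi^{n+1}$ in \eqref{3.68} is a computable constant at the start of the step. With $\xi^{n+1}$ and the extrapolated nonlinear term in hand, I would eliminate $\psi^{n+1/2}$ and $\mu^{n+1/2}$ from \eqref{3.64}--\eqref{3.66} exactly as in \eqref{3.54}, reducing the system to a single sixth-order elliptic equation for $\phi^{n+1}$ of the form $\mathcal{M}(\phi^{n+1}) = g^n$, where
\begin{equation*}
\mathcal{M}(\phi) = \Bigl(\alpha + \tfrac{\Delta t\,\beta}{2}\Bigr)\tfrac{2}{\Delta t}\phi - \tfrac{\Delta t\,M}{2}\Delta(1+\Delta)^2\phi - \Delta t\,MS\,\Delta\phi,
\end{equation*}
and the right-hand side gathers all explicit data together with the $\xi^{n+1}$-weighted nonlinear contribution. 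Since this linear operator coincides with the one in the S-GPAV-CN analysis, its positive definiteness (under periodic boundary conditions and the mean-zero constraint inherited from the mass conservation theorem) gives existence and uniqueness of $\phi^{n+1}$. Then $\psi^{n+1}$ is recovered algebraically from \eqref{3.66}, and $B^{n+1}$ is obtained from the scalar linear equation \eqref{3.67}, which is solvable as long as the denominator $1+\tfrac{\beta\Delta t}{CM}\|\psi^{n+1/2}\|_{-1}^2$ is nonzero; this is automatic because $\beta,\Delta t,C,M>0$.

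For the energy inequality I would argue by induction on $n$. For the base case, I rearrange \eqref{3.72} into $B^1 = B^0/(1 + \tfrac{\beta\Delta t}{CM}\|\psi^1\|_{-1}^2)$; since $B^0 = \exp(\widetilde{E}(\phi^0)/C) > 0$ and the denominator is $\geq 1$, I get $0 < B^1 \leq B^0$. For the induction step I rewrite \eqref{3.67} as
\begin{equation*}
B^{n+1} \;=\; \frac{B^n}{1 + \dfrac{\beta\Delta t}{CM}\|\psi^{n+1/2}\|_{-1}^2},
\end{equation*}
and observe that since $B^n > 0$ by the induction hypothesis and the denominator is again $\geq 1$, we obtain both $B^{n+1} > 0$ and $B^{n+1} \leq B^n$. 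Applying $C\ln(\cdot)$, which is monotone on $(0,\infty)$ and well-defined thanks to strict positivity, yields $C\ln(B^{n+1}) \leq C\ln(B^n)$, which is exactly the modified energy dissipation of the reformulated functional $\mathcal{E}(B)$ in \eqref{3.63}.

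I do not expect a serious obstacle here: the result is unconditional precisely because the ESAV reformulation transfers the dissipation entirely onto the scalar ODE for $B$, so no delicate manipulation of the $\phi$-level nonlinear terms or stabilization balance is needed. The only mild technical point is justifying that $\|\psi^{n+1/2}\|_{-1}$ is well-defined, which follows from the mass-conservation property (analogous to Theorem~\ref{sav-th1}) guaranteeing $\psi^{n+1/2} \in L^2_0(\Omega)$, so that $\Delta^{-1}$ acts on it in the standard way. The stabilization term $S(\phi^{n+1}-\phi^{\star,n+1})$ plays no role in the energy argument itself and enters only through $\mathcal{M}$, where it enhances positive definiteness and therefore helps, rather than hinders, unique solvability.
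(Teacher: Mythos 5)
Your proposal is correct and follows essentially the same route as the paper: well-posedness is reduced to the positive-definite sixth-order operator $\mathcal{M}$ exactly as in the S-GPAV-CN analysis (the paper simply cites Theorem~\ref{sav-th4} for this), and the energy bound is obtained by solving \eqref{3.67} and \eqref{3.72} explicitly for $B^{n+1}$ and $B^{1}$, noting the denominator is at least one, and concluding $0<B^{n+1}\leq B^{n}$ by induction before applying the monotone map $C\ln(\cdot)$. Your additional remark that $\psi^{n+1/2}\in L^2_0(\Omega)$ (so that $\|\cdot\|_{-1}$ is well-defined) is a point the paper leaves implicit but is consistent with its mass-conservation theorem.
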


\begin{proof} The proof of the well-posedness of the S-ESAV-CN scheme \eqref{3.64}-\eqref{3.68} is similar to Theorem~\ref{sav-th4}.	
	We only prove that the scheme is unconditionally energy stable.
	It is obvious that $\Delta t$, $\beta$, C, M and $\|\psi^{1}\|_{-1}^{2}$ are all non-negative. Thus, from \eqref{3.72}, one  finds
	$B^{1}= \frac{B^0}{(1+\frac{\beta \Delta t}{CM}\|\psi^{1}\|_{-1}^{2})}\leq B^{0}.$
	From \eqref{3.67}, one finds
	$B^{n+1}= \frac{B^n}{(1+\frac{\beta \Delta t}{CM}\|\psi^{n+\frac{1}{2}}\|_{-1}^{2})}\leq B^{n}.$
	Due to the property of the exponential functions, we know that $B^0>0$. We can conclude by induction that $B^{n+1}\leq B^n\leq B^{n-1}...\leq B^0$ for all $n$ and further $C \ln(B^{n+1}) \leq C \ln(B^n)$. This indicates that the discrete modified energy is non-increasing, i.e.unconditionally stable. 
\end{proof}
\subsection{Energy Variation Moving Average (EV-MA) adaptive time stepping}

This subsection presents a novel adaptive time stepping strategy based on the moving average of energy variation (EV-MA), which is particularly suitable for high-order nonlinear models with energy functional sensitive to time step changes. Using the S-SAV-CN scheme as an example, which has been proven to be unconditionally energy stable in Theorem \eqref{sav-th2}, we note that this stability property allows large time steps to be used in simulations of phase transition and crystal growth. Nevertheless, overly large constant time steps often reduce accuracy and may introduce nonphysical solutions at intermediate stages (i.e., incorrect dynamics), unless the temporal variation of the solution is negligible. The authors of \cite{47qiao2011adaptive} introduced an effective and widely applicable adaptive time stepping method based on the temporal derivative of energy \cite{48,35}. Its formulation is given below:
\begin{equation}\label{85}
	\Delta t = \max\left(\Delta t_{\min}, \frac{\Delta t_{\max}}{\sqrt{1 + \alpha_1|E'(t)|^2}}\right),
\end{equation}
where $\Delta t_{\min}$, $\Delta t_{\max}$ are minimum and maximum time steps, and $\alpha_1$ is a suitable parameter. 

However, the strong nonlinear potential of the VMPFC model during its rapid structural transition phase results in high sensitivity to time step changes.
This sensitivity gives rise to instantaneous jumps in the energy derivative, triggering violent step-size variations. As a result, the time steps auto-selected by the adaptive algorithm \eqref{85} exhibit severe oscillations that produce erroneous dynamical outcomes, detailed in Fig.~\ref{fig:4.16} of the next section.
To address this issue, we propose a novel adaptive time-stepping strategy designed to smooth the adaptive time steps and better capture the rapid transition of the solution. To mitigate the influence of instantaneous energy fluctuations and ensure a smooth adaptation of the time steps, the moving average of the energy variation, denoted as $\Delta \bar{E}$, is employed.
At each time step, the energy variation $\Delta {E}$ between consecutive time steps is computed and stored in a finite-length history buffer $E_{\text{hist}}$ of size $\mathit{w_{size}}$. The average of $E_{\text{hist}}$ is then computed and denoted as $\Delta \bar{E}$, where $\Delta\bar{E} = \operatorname{mean}(E_{\text{hist}})$.
The expected time steps is subsequently calculated using the following equation:
\begin{equation}
	\Delta t =\max\left( \Delta t_{\min},\ \dfrac{\Delta t_{\max}}{\sqrt{1 + \alpha_1 \cdot \Delta\bar{E}}} \right).
\end{equation}
To prevent abrupt changes in the time steps, the ratio between adjacent  time steps is constrained by a maximum change factor $\mathit{ratio}_{\max}$.
Furthermore, when the time step exceeds a critical threshold $\Delta t_{cr}$, the stabilization term is activated to maintain numerical stability under large time steps. We summarize the procedure in Algorithm 1.
\section{Numerical experiments}
In this section, we present various 2D and 3D numerical simulations to validate the accuracy, energy stability and efficiency of the proposed numerical schemes. We assume the boundary conditions are periodic and use the Fourier spectral method with $N^{d}$ Fourier modes for spatial discretization. The computational domain is set to $\Omega=[0,L]^{d} (d = 2,3)$.
\subsection{Convergence test}
In this subsection, we test the temporal convergence rates for the S-SAV-CN scheme \eqref{3.11}-\eqref{3.14}, the S-GPAV-CN scheme~\eqref{3.41}-\eqref{3.44} and the S-ESAV-CN scheme \eqref{3.64}-\eqref{3.68} to verify the accuracy and validity. For $S = 0$, the corresponding schemes are referred to as SAV-CN, GPAV-CN, and ESAV-CN, respectively, as the stabilization term vanishes. We use $128 ^{2}$ Fourier  modes so that the spatial discretization errors are negligible compared to the errors introduced by the time discretization. We choose the suitable forcing term such that the exact solution is given by
\begin{equation}
	\phi(x,y,0)=\sin(\frac{8\pi x}{128})\cos\left(\frac{8\pi x}{128} y\right)cos(t), \quad \psi(x,y,0)=0
\end{equation}\label{4.1}
\begin{algorithm}[t]
	\caption{EV-MA Adaptive Time Strategy}
	\label{alg:adaptive_timestep}
	\begin{algorithmic}[1]		
		\Require{
			$\mathit{w_{size}}$,\ 
			$\mathit{ratio_{max}}$,\ 
			$\Delta t_{\min}$,\ 
			$\Delta t_{\max}$,\ 
			$\Delta t_{cr}$,\ 
			$\alpha_1$,\ 
			$S_{cr}$.
		}		
		\Statex
		\Procedure{AdaptiveTimeStep}{}
		\State  $ t=\Delta t^{old}=\Delta t_{\min}$, $S = 0$, Compute $\phi^{1}$ and $E^{\text{old}}$ with BDF1 scheme, $\Delta t^{\text{new}}=\Delta t_{\min}$          
		
		\While{$t < T$}  
		\State  $t = t + \Delta t^{\text{new}}$    
		\If{$\Delta t^{\text{new}} > \Delta t_{cr}$}  
		\State $S = S_{cr}\neq 0$;             \Comment{Activate stabilization term }
		\Else
		\State $S = 0$;
		\EndIf
		\State Compute $\phi^{\text{new}}$ and $E^{\text{new}} $ with CN scheme, $\Delta t^{\text{old}} \gets \Delta t^{\text{new}}$.
		
		\State $\Delta E \gets |E^{\text{new}} - E^{\text{old}}|$ \Comment{Compute energy variation}
		
		\If{$E_{\text{hist}} = \emptyset$} \Comment{Initialize history buffer}
		\State $E_{\text{hist}} \gets []$
		\EndIf
		\State $E_{\text{hist}} \gets [E_{\text{hist}}, \Delta E]$ \Comment{Update history}
		
		\If{$\mathrm{length}(E_{\text{hist}}) > \mathit{w_{size}}$} \Comment{Maintain window size}
		\State $E_{\text{hist}} \gets E_{\text{hist}}[\text{end}-\mathit{w_{size}}+1:\text{end}]$
		\EndIf
		
		\State $\Delta\bar{E} \gets \mathrm{mean}(E_{\text{hist}})$ \Comment{Compute moving average}
		
		\State $\Delta t \gets \max\left( \Delta t_{\min},\ \dfrac{\Delta t_{\max}}{\sqrt{1 + \alpha_1 \cdot \Delta\bar{E}}} \right)$ \Comment{Theoretical time step}
		
		\State $\rho \gets \dfrac{\Delta t}{\Delta t^{\text{old}}}$ \Comment{Compute change ratio}
		\If{$\rho > \mathit{ratio_{max}}$}
		\State $\Delta t^{\text{new}} \gets \Delta t^{\text{old}} \times \mathit{ratio_{max}}$ \Comment{Upper limit}
		\ElsIf{$\rho < 1/\mathit{ratio_{max}}$}
		\State $\Delta t^{\text{new}} \gets \Delta t^{\text{old}} / \mathit{ratio_{max}}$ \Comment{Lower limit}
		\Else
		\State $\Delta t^{\text{new}} \gets \Delta t$ \Comment{Accept calculated value}
		\EndIf    
		\State  $E^{\text{old}} \gets E^{\text{new}}$           		
		\EndWhile  
		\EndProcedure  
	\end{algorithmic}
\end{algorithm}	
\clearpage
\begin{figure}[t]
	\centering
	\subfigure[$h_{vac}= 0$]
	{	
		\includegraphics[width=3.2cm,height=3.2cm]{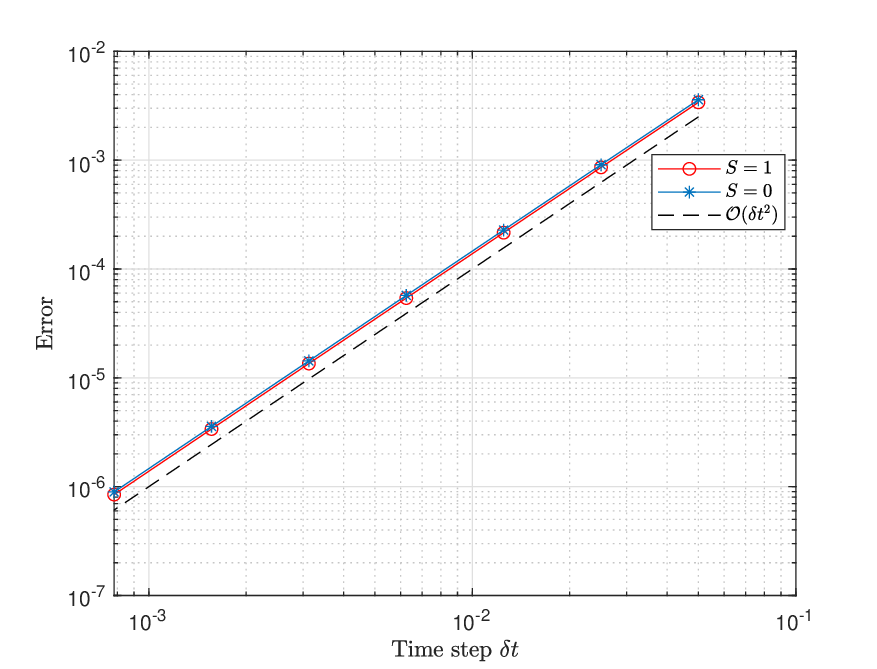}
	}	
	\subfigure[$h_{vac}= 500$]
	{	\includegraphics[width=3.2cm,height=3.2cm]{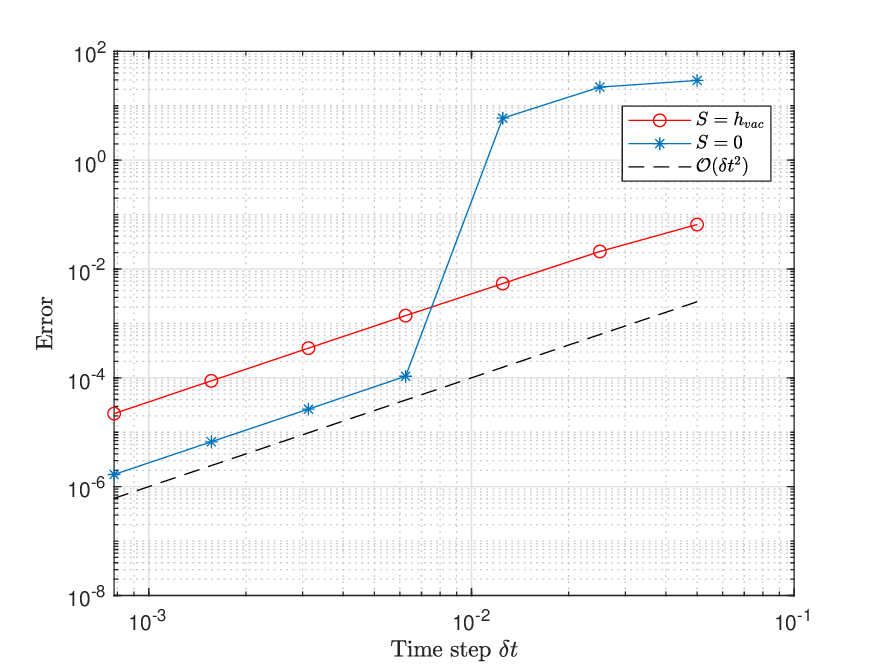}
	}
	\subfigure[$h_{vac}= 1000$]
	{	
		\includegraphics[width=3.2cm,height=3.2cm]{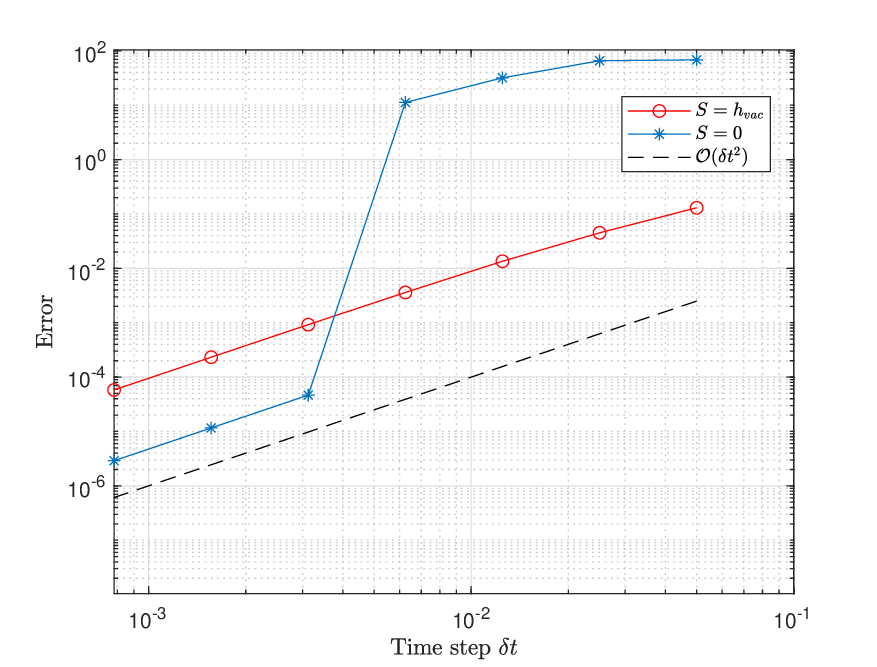}
	}	
	\subfigure[$h_{vac}= 3000$]
	{	\includegraphics[width=3.2cm,height=3.2cm]{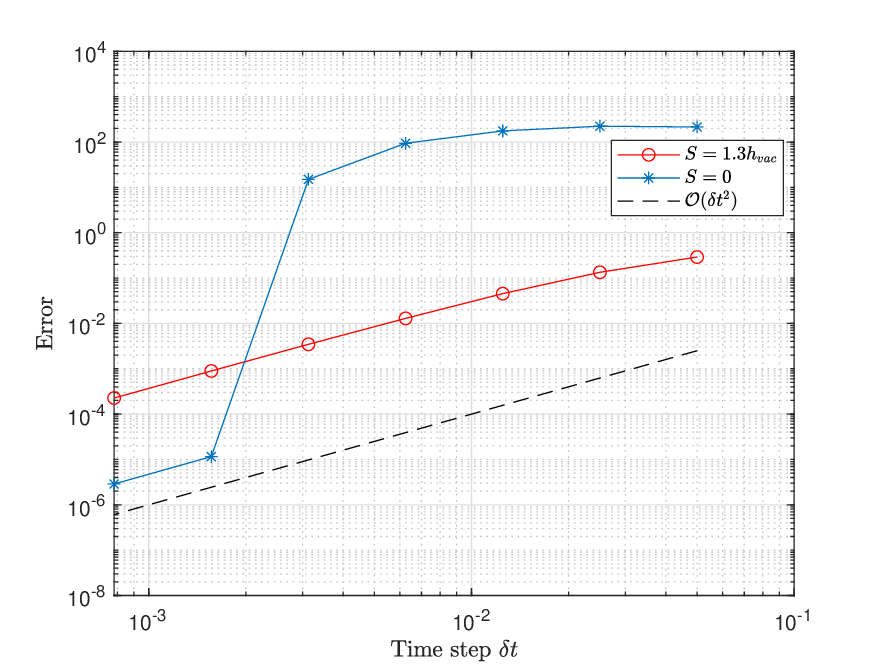}
	}
	\caption{The $L^{2}$ errors of the phase variable $\phi$ with various time steps using the S-SAV-CN scheme.}\label{fig:4.1}
\end{figure}
\begin{figure}[htp]
	\centering
	\subfigure[$h_{vac}= 0$]
	{	
		\includegraphics[width=3.2cm,height=3.2cm]{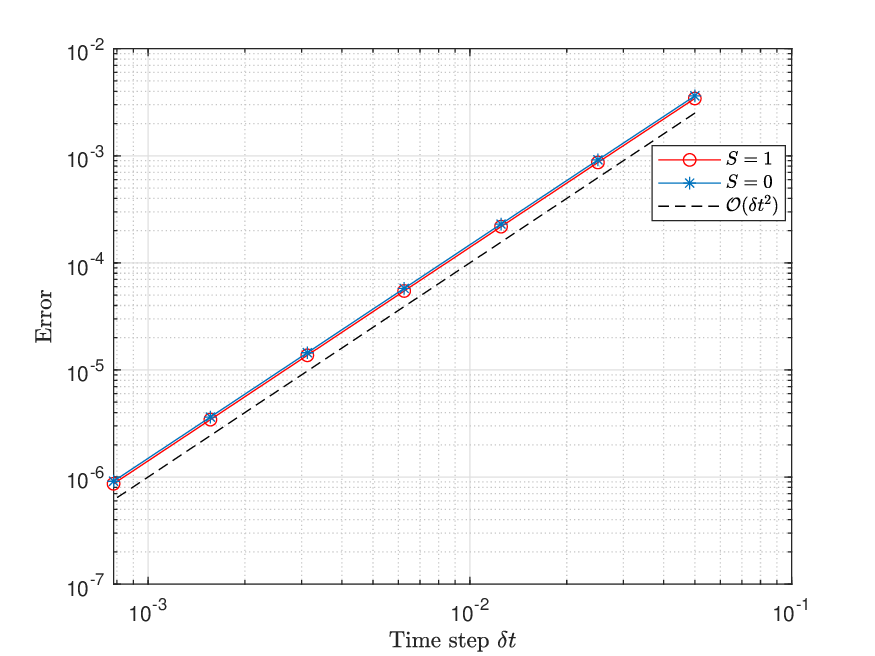}
	}	
	\subfigure[$h_{vac}= 500$]
	{	\includegraphics[width=3.2cm,height=3.2cm]{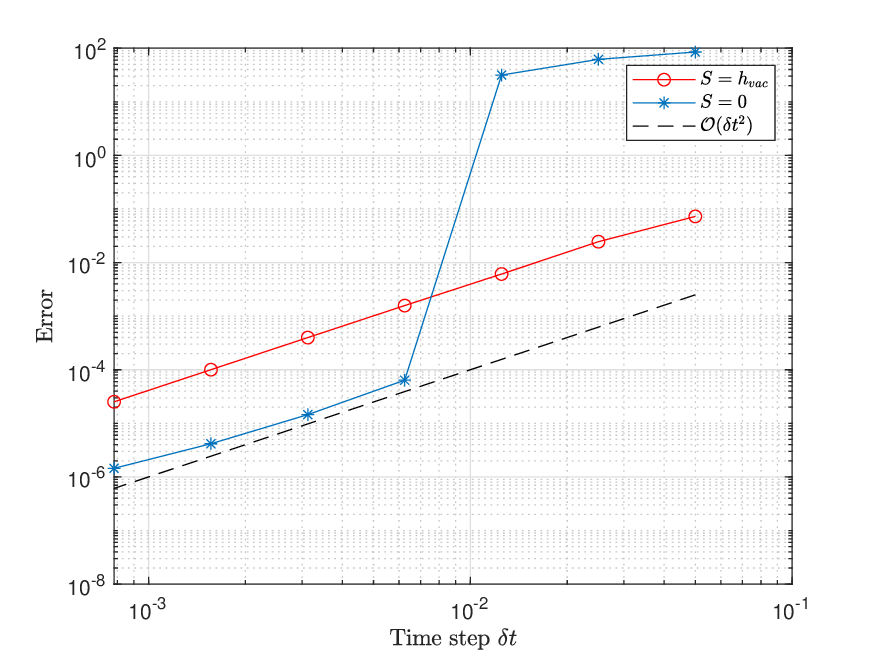}
	}
	\subfigure[$h_{vac}= 1000$]
	{	
		\includegraphics[width=3.2cm,height=3.2cm]{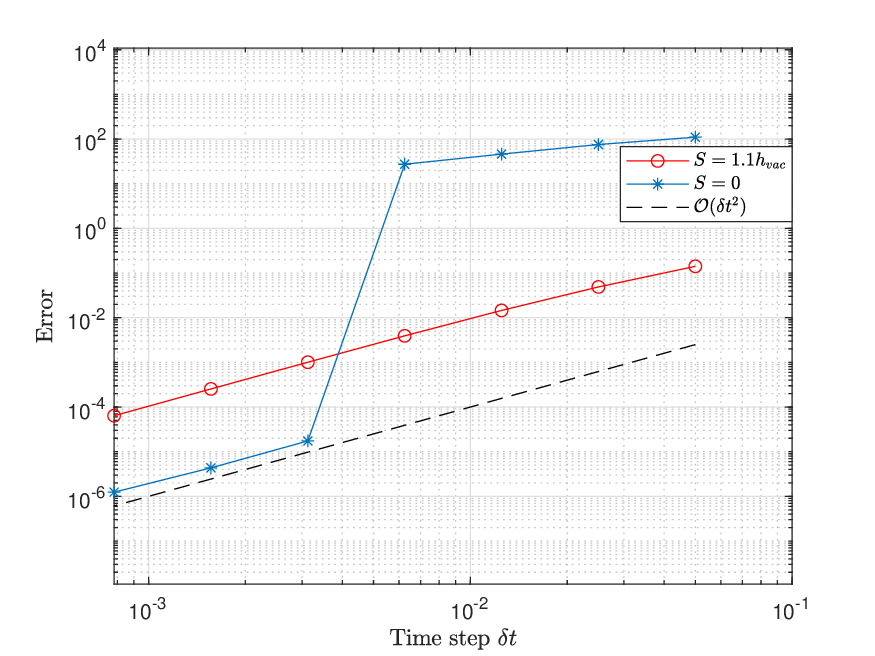}
	}	
	\subfigure[$h_{vac}= 3000$]
	{	\includegraphics[width=3.2cm,height=3.2cm]{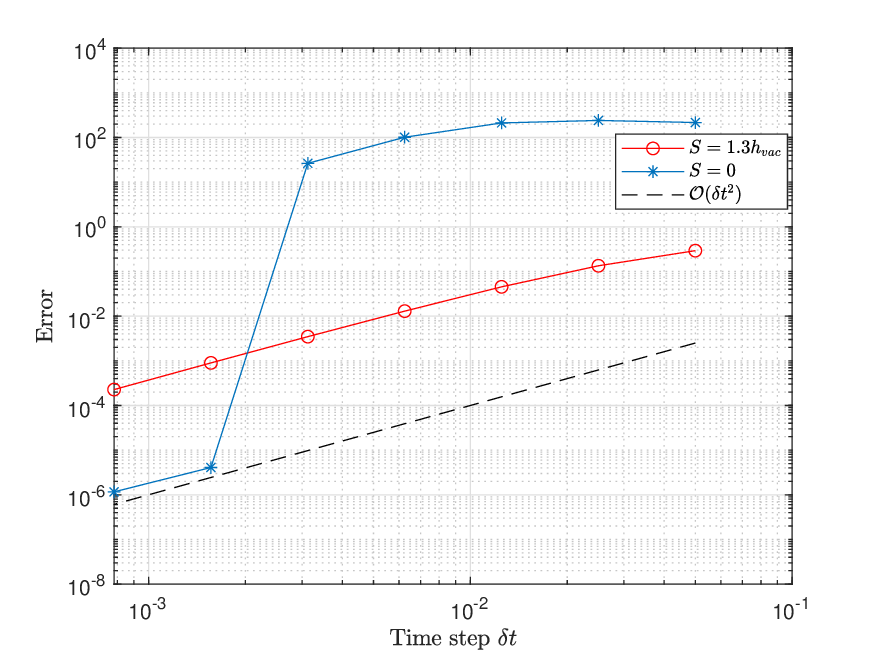}
	}
	\caption{The $L^{2}$ errors of the phase variable $\phi$ with various time steps using the S-GPAV-CN scheme.}\label{fig:4.2}
\end{figure}

\begin{figure}[t]
	\centering
	\subfigure[$h_{vac}= 0$]
	{	
		\includegraphics[width=3.2cm,height=3.2cm]{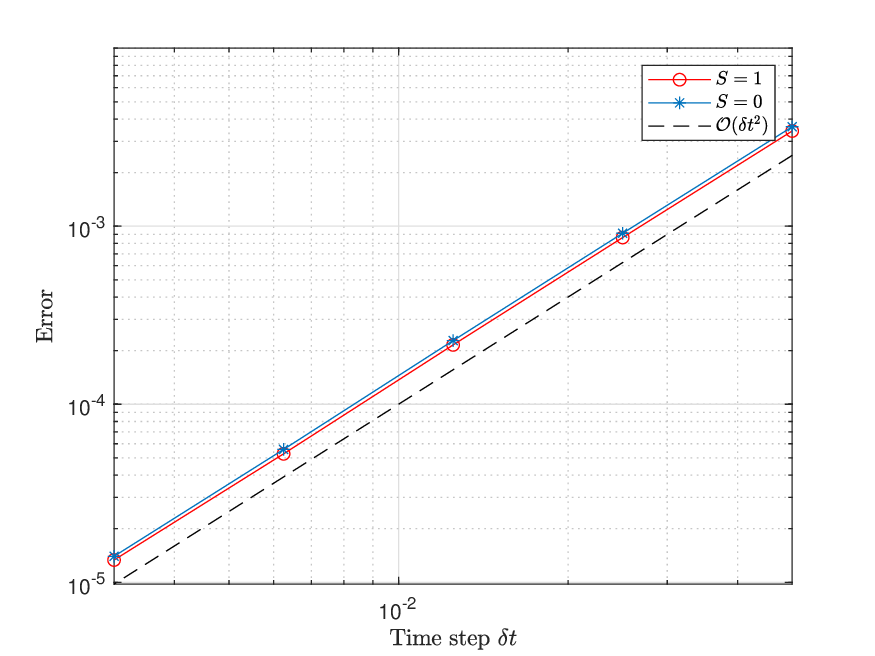}
	}	
	\subfigure[$h_{vac}= 500$]
	{	\includegraphics[width=3.2cm,height=3.2cm]{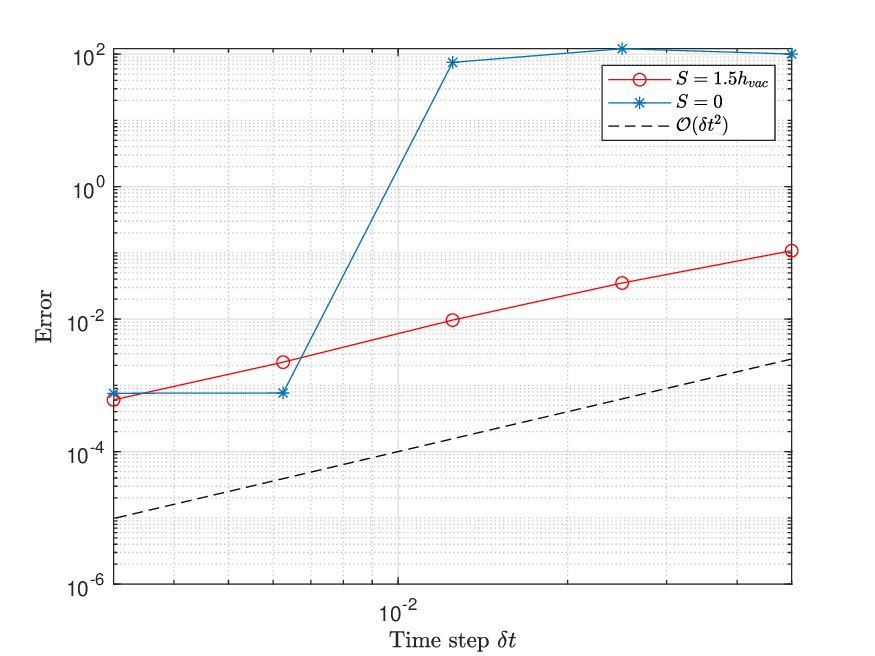}
	}
	\subfigure[$h_{vac}= 1000$]
	{	
		\includegraphics[width=3.2cm,height=3.2cm]{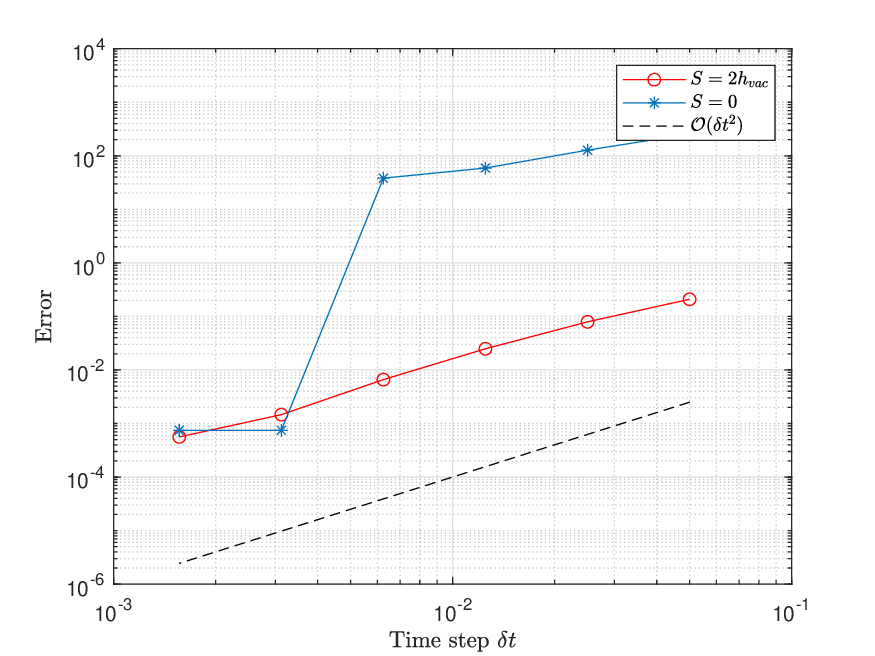}
	}	
	\subfigure[$h_{vac}= 3000$]
	{	\includegraphics[width=3.2cm,height=3.2cm]{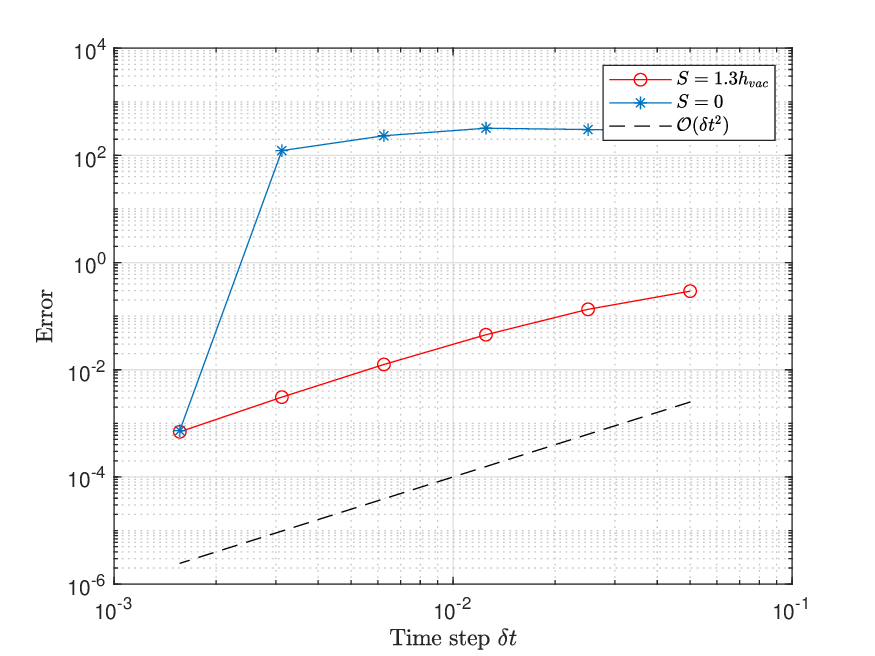}
	}
	\caption{The $L^{2}$ errors of the phase variable $\phi$ with various time steps using the S-ESAV-CN scheme.}\label{fig:4.3}
\end{figure}

\begin{figure}[t]
	\centering
	\subfigure[$S-SAV-CN,S=100$]{
		\includegraphics[width=3.2cm,height=3.2cm]{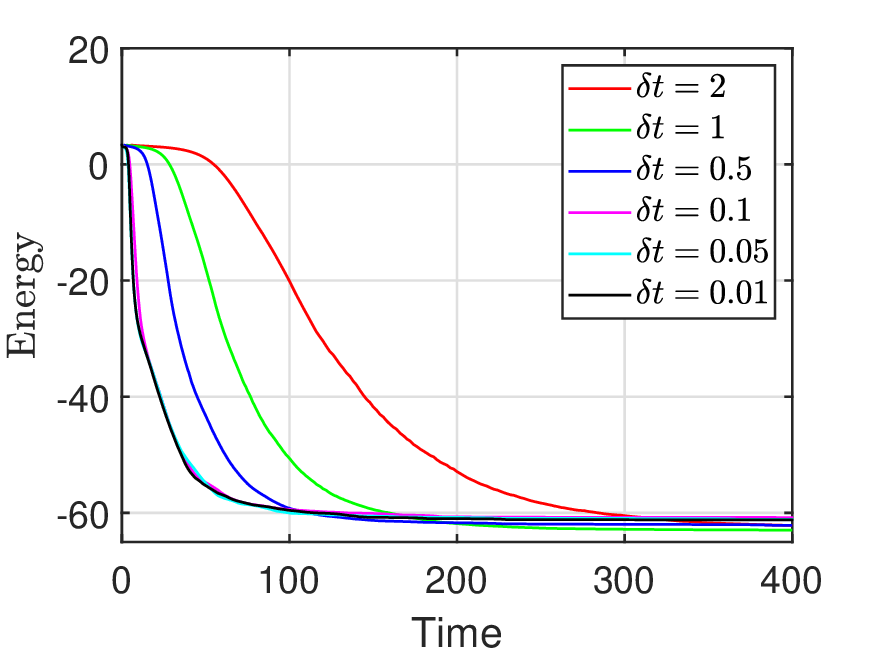}
	}
	\subfigure[$S-GPAV-CN,S=100$]{
		\includegraphics[width=3.2cm,height=3.2cm]{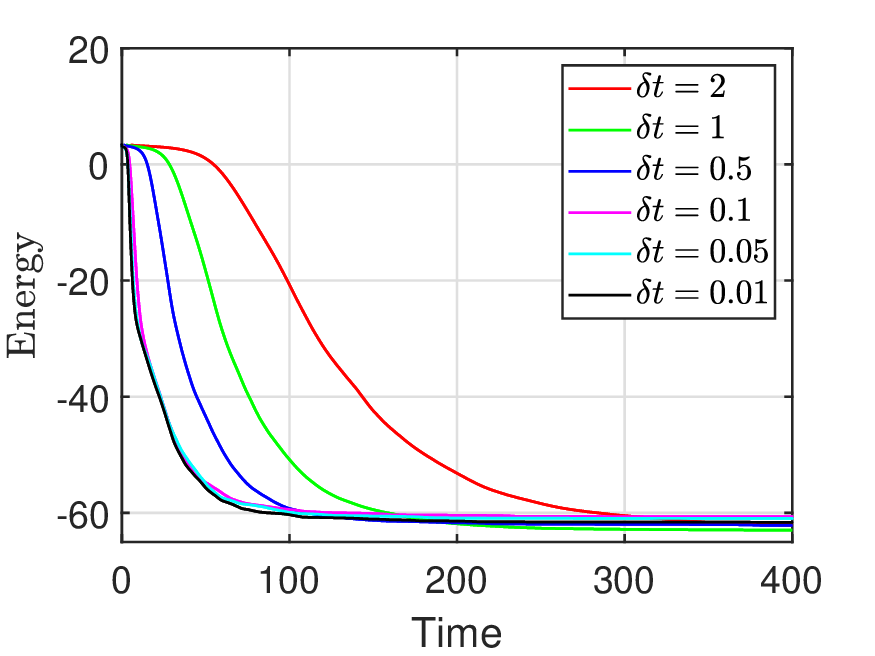}
	}	
	\subfigure[$S-ESAV-CN,S=100$]{
		\includegraphics[width=3.2cm,height=3.2cm]{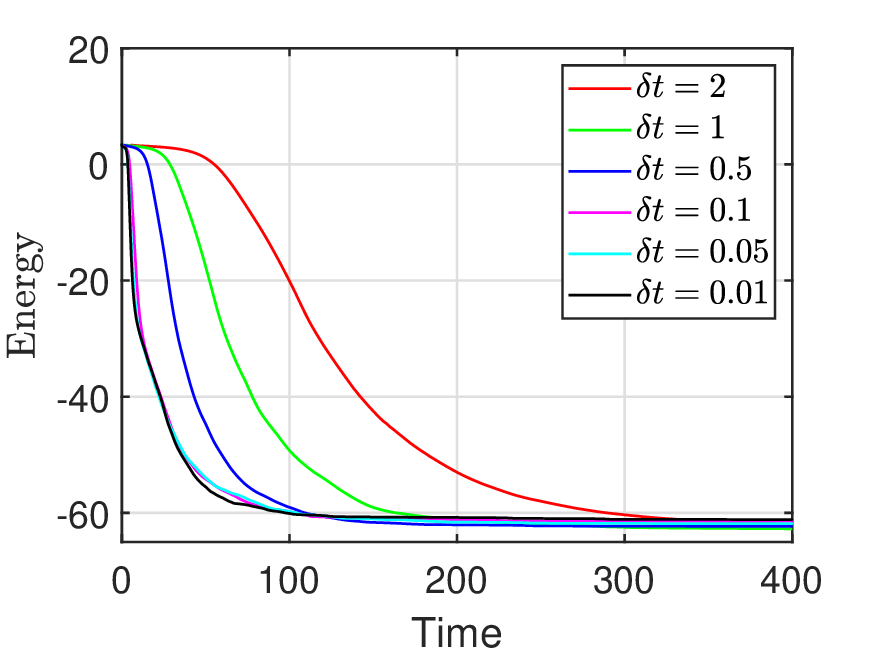}
	}
	\subfigure[$S-SAV-CN,S=0$]{
		\includegraphics[width=3.2cm,height=3.2cm]{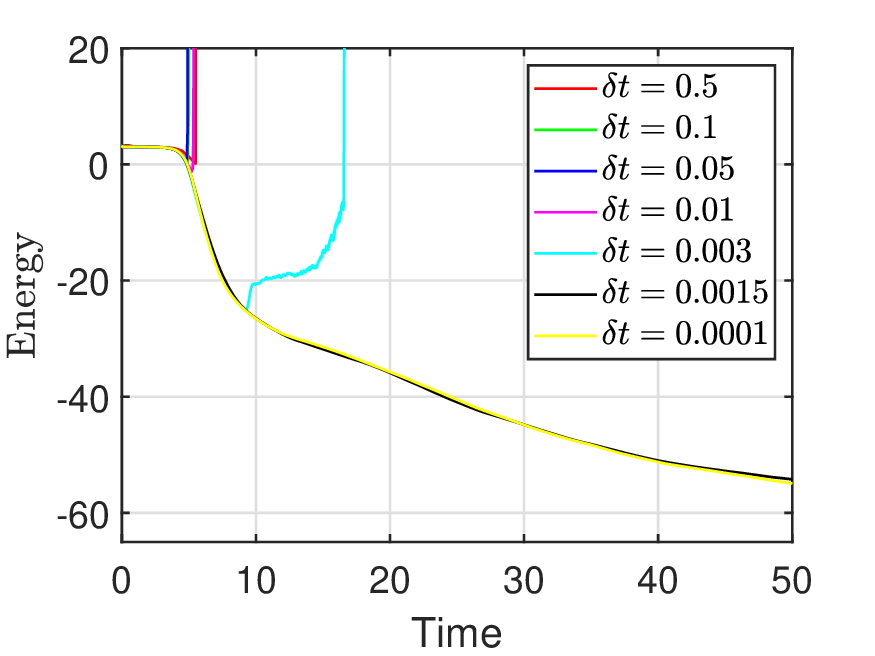}
	}
	\caption{The evolution of pseudo energ $\widetilde{E}$ using the (a) S-SAV-CN, (b) S-GPAV-CN, and (c) S-ESAV-CN schemes with $S=100$, and (d) S-SAV-CN scheme $S=0$.}\label{fig:4.4}
\end{figure}
We set the other parameters as $\alpha=1, \beta=1, M=1, \epsilon=0.025,L = 128, C=10^8, T=1.$
We set $b=10^4$ and $c_0=10^3$ (which can be set to even larger values.) in the S-SAV-CN and S-GPAV-CN schemes to ensure that the radicand in \eqref{3.2} and \eqref{3.32} is quite positive for all parameters $h_{vac}$ and $\varepsilon$.
To decelerate the rapid growth of the exponential function, a value of $C=10^8$ is used in the S-ESAV scheme.
In this example, $\left|\min_{x \in \Omega} (\phi(\mathbf{x}, t), 0 )\right|$ is around 1. Therefore, we could choose $S = 1$ in the computations for the S-SAV and S-GPAV schemes.
For larger $hvac$ values (e.g., 3000) and the S-ESAV scheme, the parameter S could be slightly larger to achieve a balance between
stability and accuracy.
Figs.~\ref{fig:4.1}-\ref{fig:4.3} show the $L^{2}$ errors between the numerical and exact solutions of the phase variable $\phi$ at $T=1$, computed with varying time steps. The temporal discretization employs time step sizes $\Delta t = \frac{2^{-k}}{10}$ for $k={1,2,\ldots,7}$. When the penalization parameter $h_{vac} = 0$ ( Figs.~\ref{fig:4.1}-\ref{fig:4.3}(a)), this term $f_{vac}(\phi)$ vanishes and the stabilization term becomes unnecessary. However, for consistency, we still choose $S = 1$ in the computations. For nonzero cases, $h_{vac} = 500$, $h_{vac} = 1000$ and $h_{vac} = 3000$ (shown in (b), (c), and (d) of Figs.~\ref{fig:4.1}-\ref{fig:4.3}), the stabilization term is activated with S.

On the one hand, the proposed numerical schemes exhibit poor accuracy for larger time steps when $S=0$. In contrast, with the stabilization term, these schemes not only provide accurate approximations to the exact solutions but also  preserve second-order accuracy across all tested time steps. 
Furthermore, when $h_{vac}=0$, the stabilization term is unnecessary, but when $h_{vac}$  reaches sufficiently large values, it becomes crucial for maintaining numerical stability.
Ultimately, these results clearly demonstrate the significant influence of the stabilization term.
In summary, the stabilizing term becomes indispensable when employing a relatively large step size or a high value of $h_{vac}$. On the other hand, the convergence behavior of the S-SAV-CN and S-GPAV-CN schemes are extremely similar, and both schemes achieve slightly better accuracy than the S-ESAV-CN scheme.
\begin{figure}[t]
	\centering
	\subfigure[$t=10$]{
		\includegraphics[width=3.4cm,height=2.8cm]{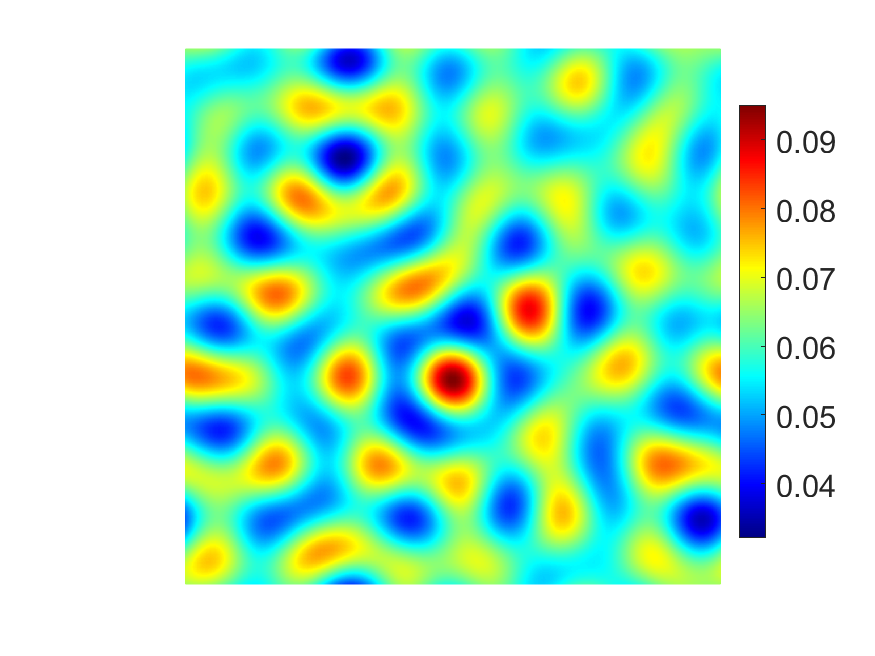}
	}
	\subfigure[$t=500$]{
		\includegraphics[width=3.4cm,height=2.8cm]{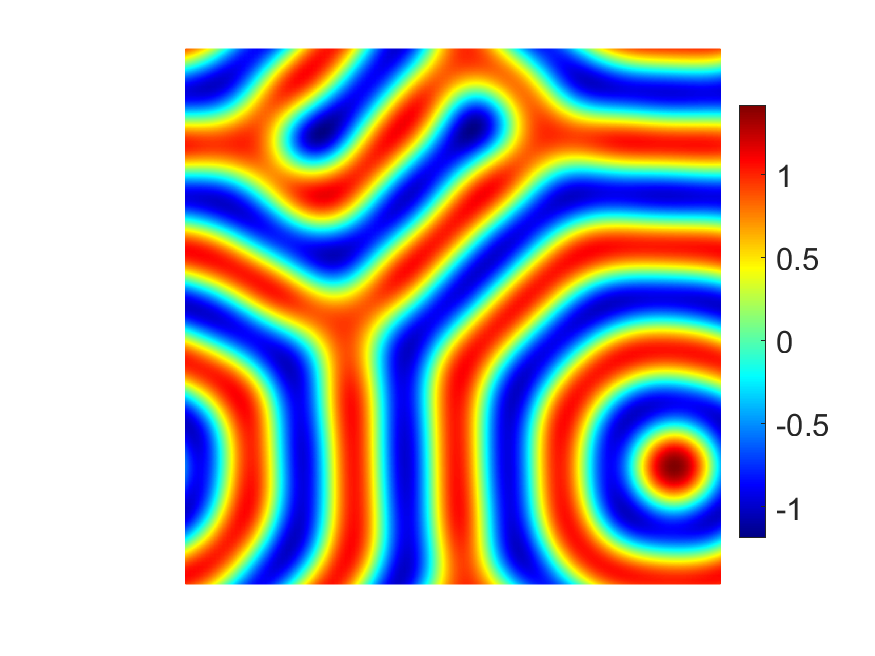}
	}
	\subfigure[$Energy$]{
		\includegraphics[width=3.4cm,height=2.8cm]{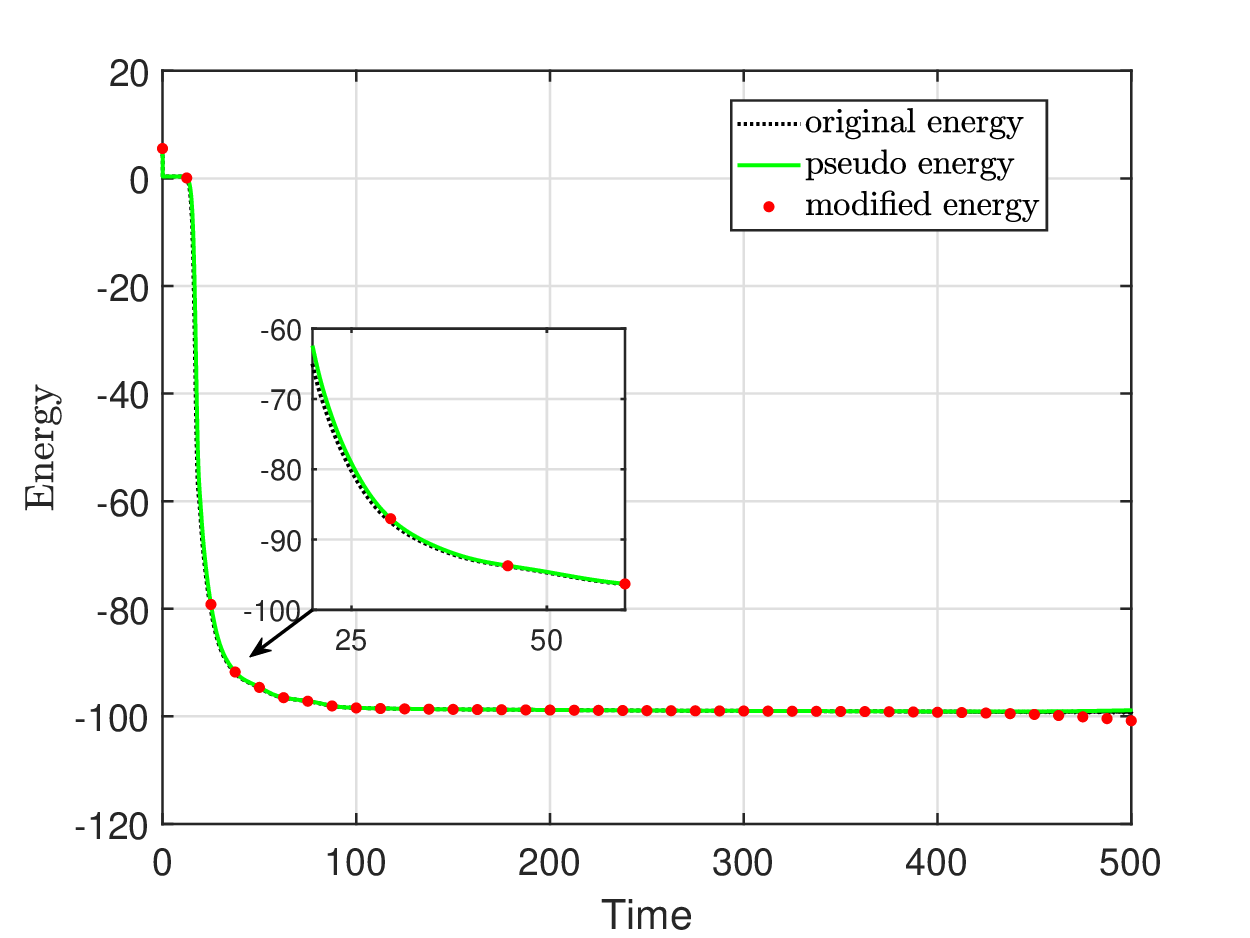}
	}
	\subfigure[$Mass$]{	
		\includegraphics[width=3.4cm,height=2.8cm]{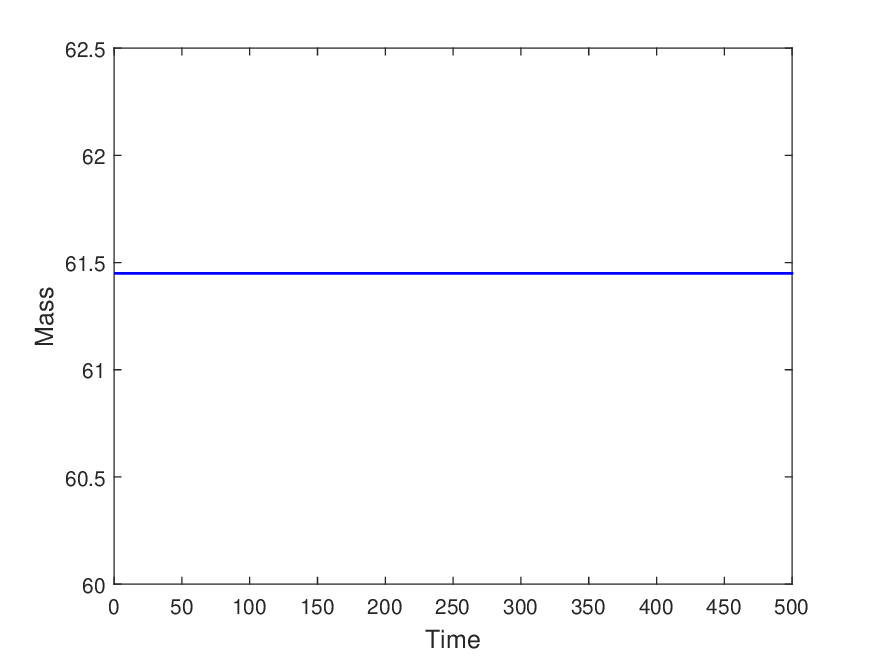}
	}
	\subfigure[$t=10$]{
		\includegraphics[width=3.4cm,height=2.8cm]{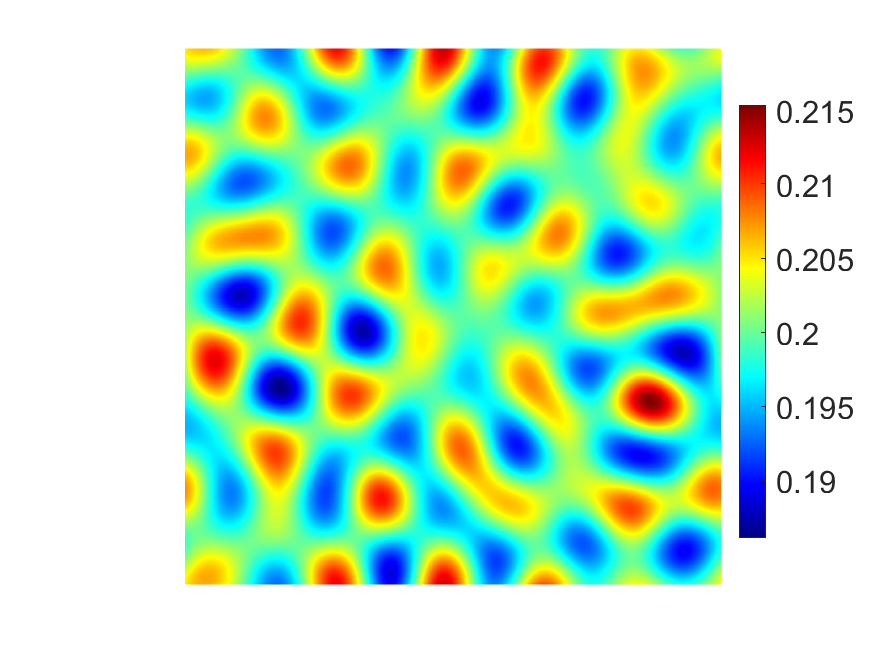}
	}
	\subfigure[$t=500$]{
		\includegraphics[width=3.4cm,height=2.8cm]{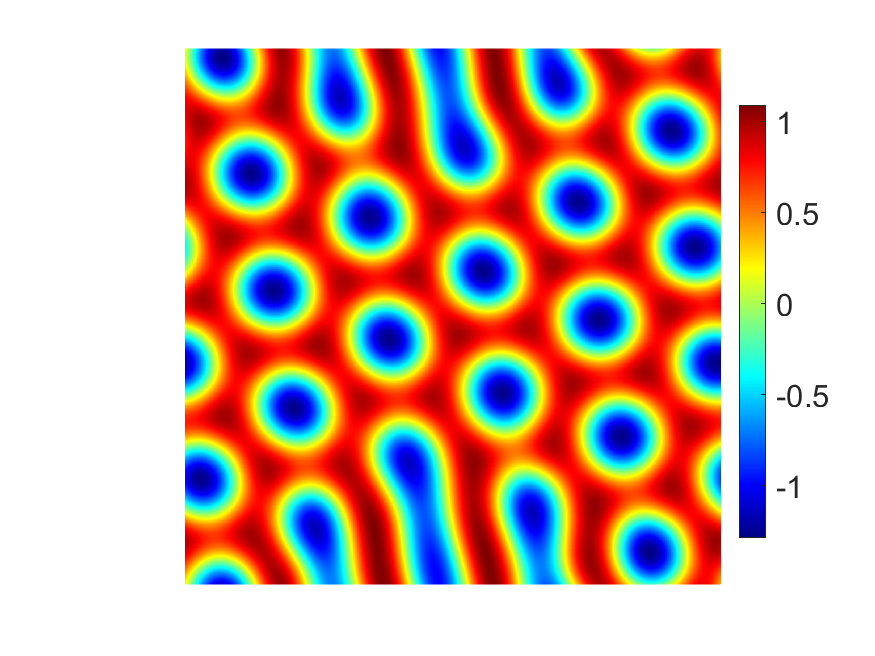}
	}
	\subfigure[$Energy$]{
		\includegraphics[width=3.4cm,height=2.8cm]{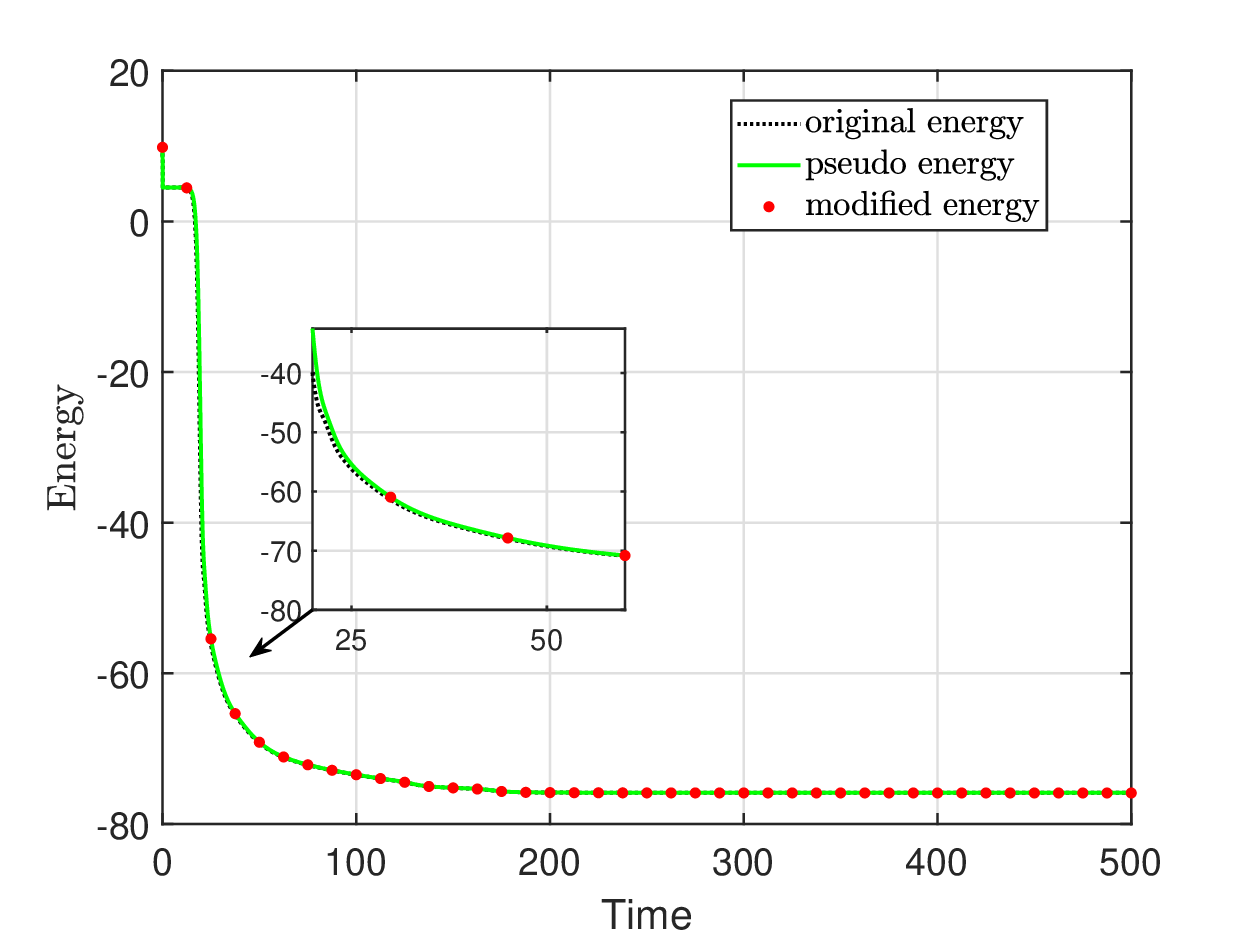}
	}
	\subfigure[$Mass$]{	
		\includegraphics[width=3.4cm,height=2.8cm]{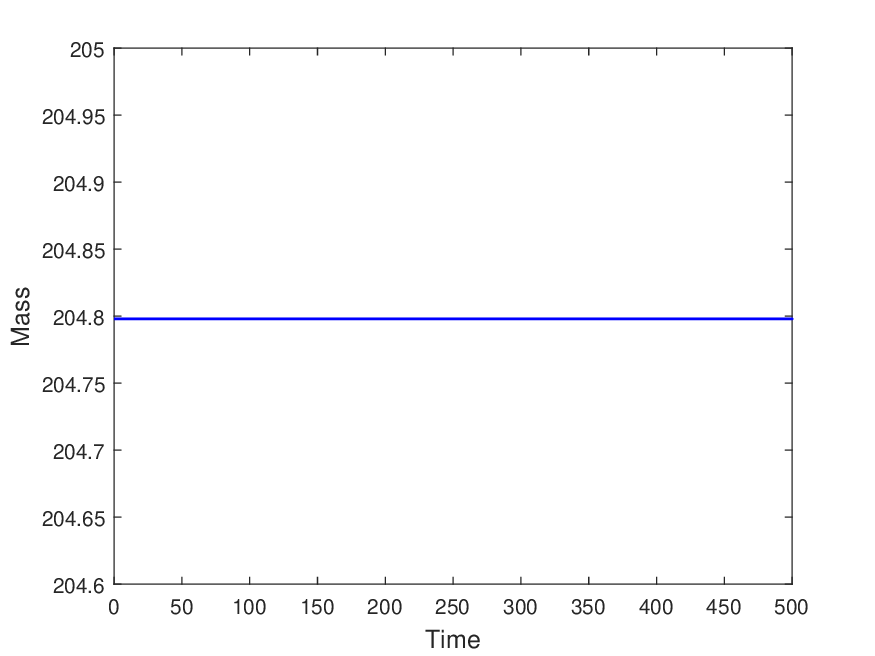}
	}
	\subfigure[$t=10$]{	
		\includegraphics[width=3.4cm,height=2.8cm]{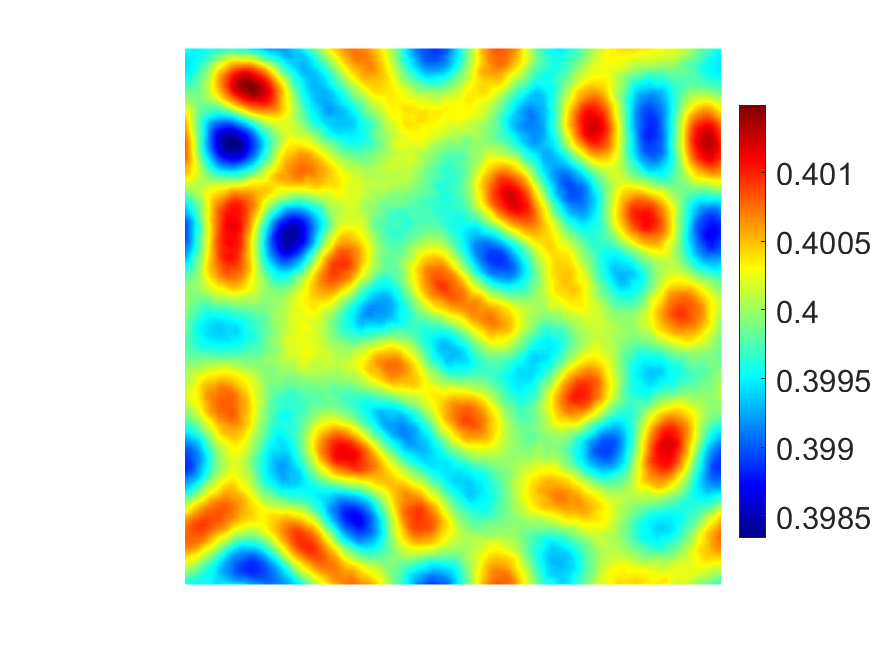}
	}
	\subfigure[$t=500$]{
		\includegraphics[width=3.4cm,height=2.8cm]{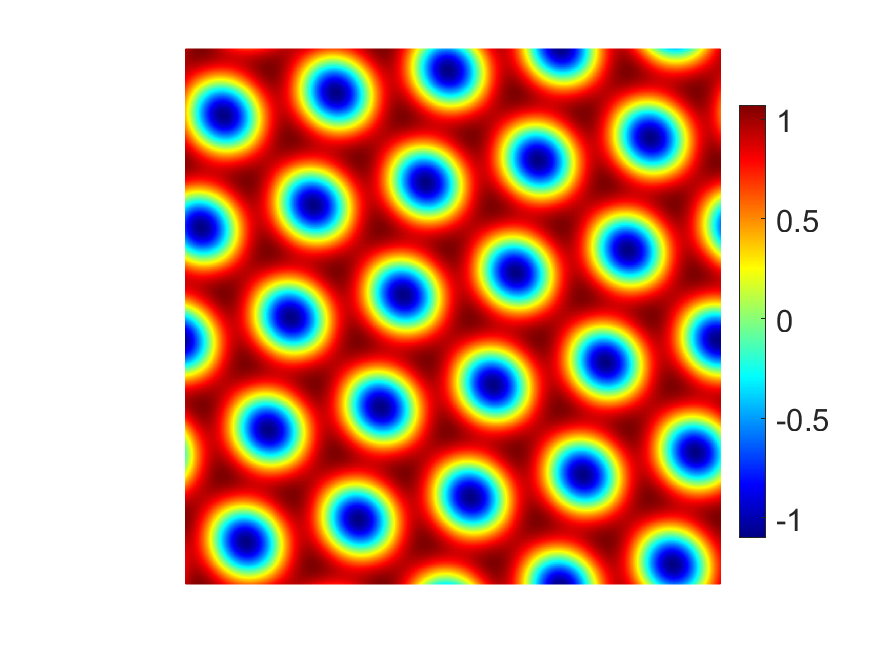}
	}
	\subfigure[$Energy$]{
		\includegraphics[width=3.4cm,height=2.8cm]{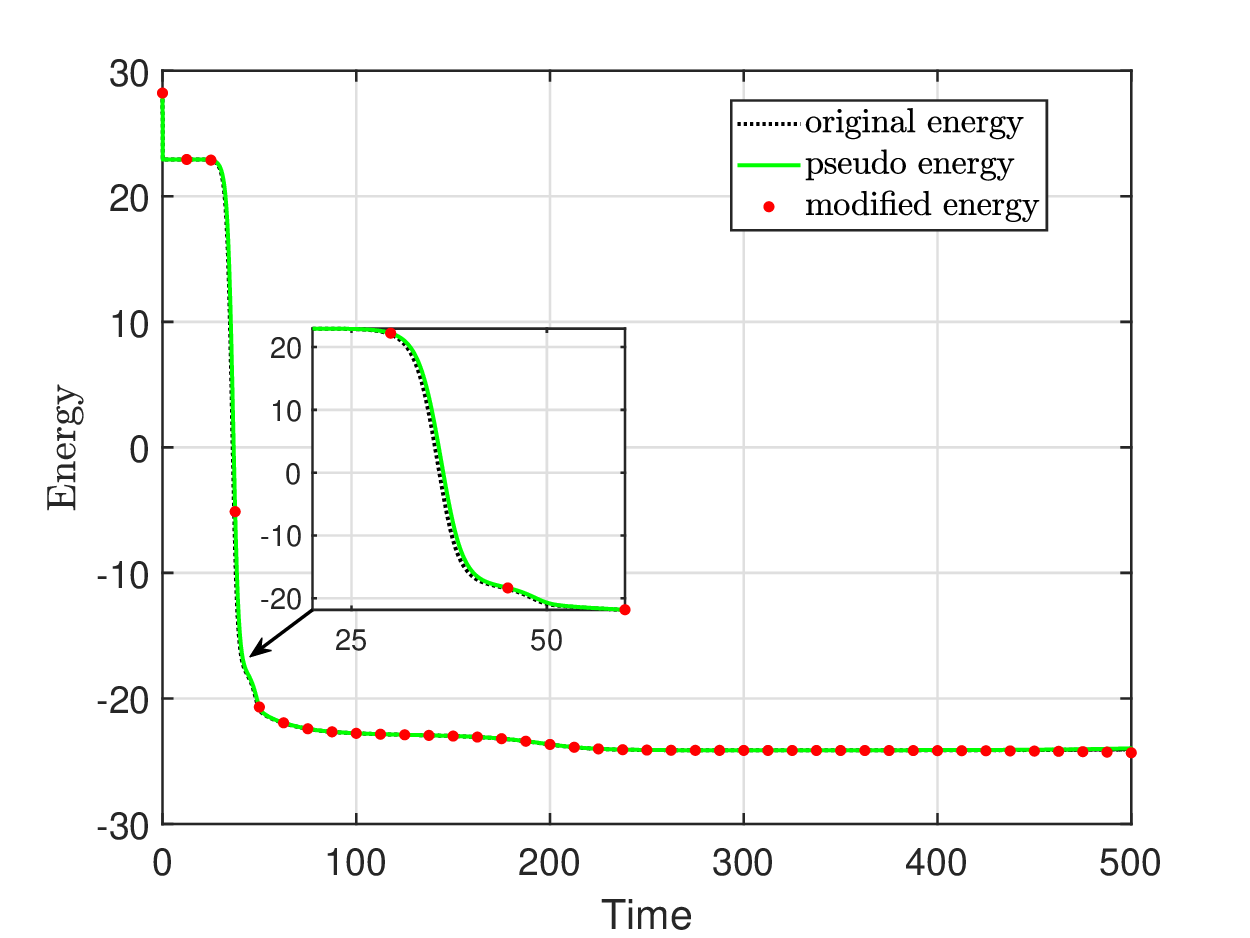}
	}
	\subfigure[$Mass$]{	
		\includegraphics[width=3.4cm,height=2.8cm]{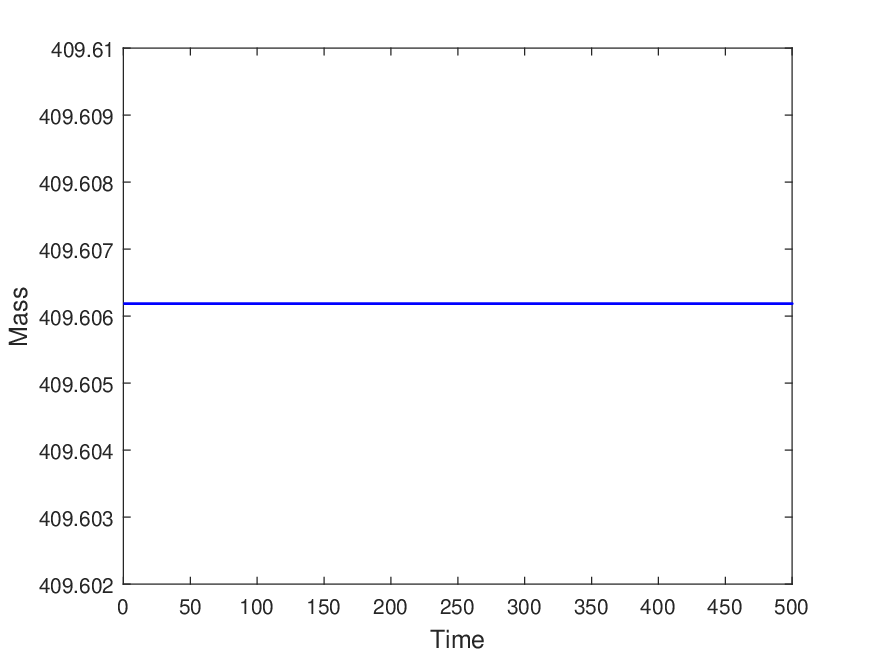}
	}
	\caption{The 2D dynamical behaviors of the phase transition without vacancy potential with $\bar{\phi} = 0.06(Row 1), 0.2(Row 2), $ and $0.4(Row 3)$. Snapshots of the phase variable $\phi$ correspond to $t = 10$ and $500$.}\label{fig:4.5}
\end{figure}
\subsection{Energy stability test}
We now conduct the energy stability tests for our proposed schemes using numerical experiments similar to those in \cite{31zhang2019efficient,32pei2022efficient,33zhang2023highly,16li2019efficient}. The fixed parameters are set to $\varepsilon=0.9,\alpha=0.01,\beta=1, M=1, T=400, L=128$. We use $128^2$ Fourier modes for spatial discretization to demonstrate the energy stability of the proposed schemes with the following initial conditions:
\begin{equation}\label{4.2.1}
	\phi(\textbf{x},0)=0.06+0.001rand(\textbf{x}), \quad  \psi(\textbf{x},0)=0,
\end{equation}
where $rand(\textbf{x})$ is uniformly distributed random number between -1 and 1.
First, we set the penalization parameter $h_{vac} = 5000$ and the stabilization parameter $S = 100$. Fig.~\ref{fig:4.4}(a) shows the temporal evolution of the pseudo energy \eqref{2.9}, computed by the S-SAV-CN scheme, for different time steps $\Delta t = 2, 1, 0.5, 0.1, 0.05,$ and $0.01$. The unconditional stability of the S-SAV-CN scheme is demonstrated numerically: the pseudo energy \eqref{2.9} decays monotonically for all tested time steps, including cases with relatively large $\Delta t$.
For comparison, identical tests are conducted for both S-GPAV-CN and S-ESAV-CN schemes, with their corresponding pseudo energy evolution curves shown in Fig.~\ref{fig:4.4}(b) and \ref{fig:4.4}(c), respectively.
All energy profiles in Fig.~\ref{fig:4.4}(a)--(c) exhibit monotonic decay, indicating that all three schemes are energy stable.
Due to the similarity of the energy results observed in Fig.~\ref{fig:4.4}(a), (b), and (c) using all schemes, we use the S-SAV-CN scheme to study the effects of the stabilization term.
Fig.~\ref{fig:4.4}(d) shows the energy evolution without the stabilization term ($S = 0$), revealing distinct behavior: pseudo energy growth occurs at larger time steps ($\Delta t > 0.003$), while monotonic decay is only achieved when $\Delta t \leq 0.0015$. These numerical results highlight two key findings: (1) the stabilization term plays a crucial role in maintaining energy stability, and (2) the developed second-order schemes guarantee unconditional energy stability.
\subsection{Phase transition simulations in 2D and 3D}
This subsection presents the phase transition dynamics of the VMPFC model. The developed benchmark experiment are equally applicable to classical PFC \cite{8hu2009stable,9shin2016first,10yang2017linearly} and MPFC \cite{16li2019efficient,24baskaran2013energy,28li2022efficient,30dehghan2016numerical} models. The initial conditions are defined as $\phi(\textbf{x}, 0)=\bar{\phi}+ 0.001rand(\textbf{x}), \psi(\textbf{x},0)=0,$
where $rand(\textbf{x})$ represents uniformly distributed random perturbations between -1 and 1 at grid points, with $\bar{\phi}$ representing the constant mean density.
\begin{figure}[t]
	\centering
	\subfigure[$t=50$]{
		\includegraphics[width=3.4cm,height=2.8cm]{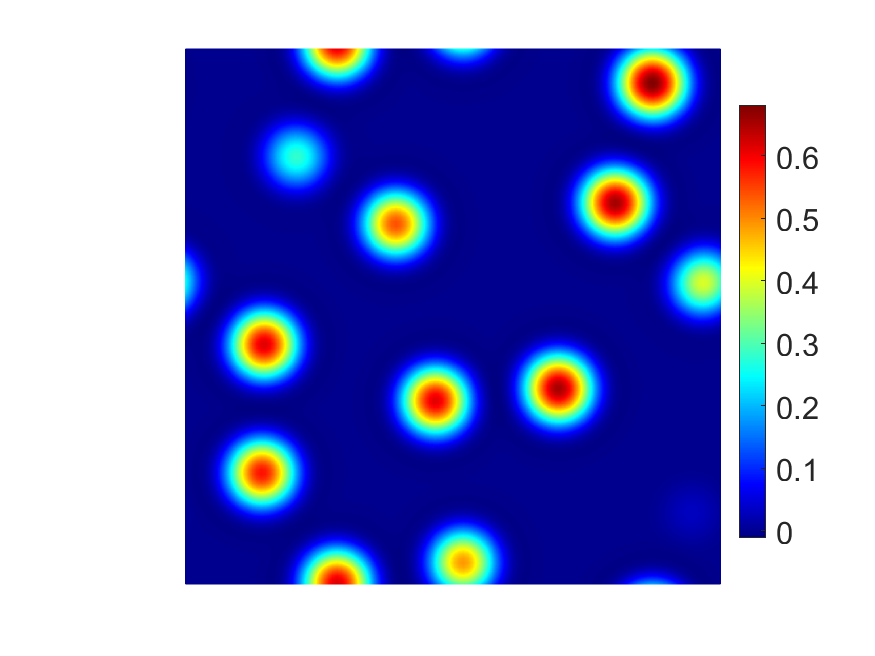}
	}
	\subfigure[$t=500$]{
		\includegraphics[width=3.4cm,height=2.8cm]{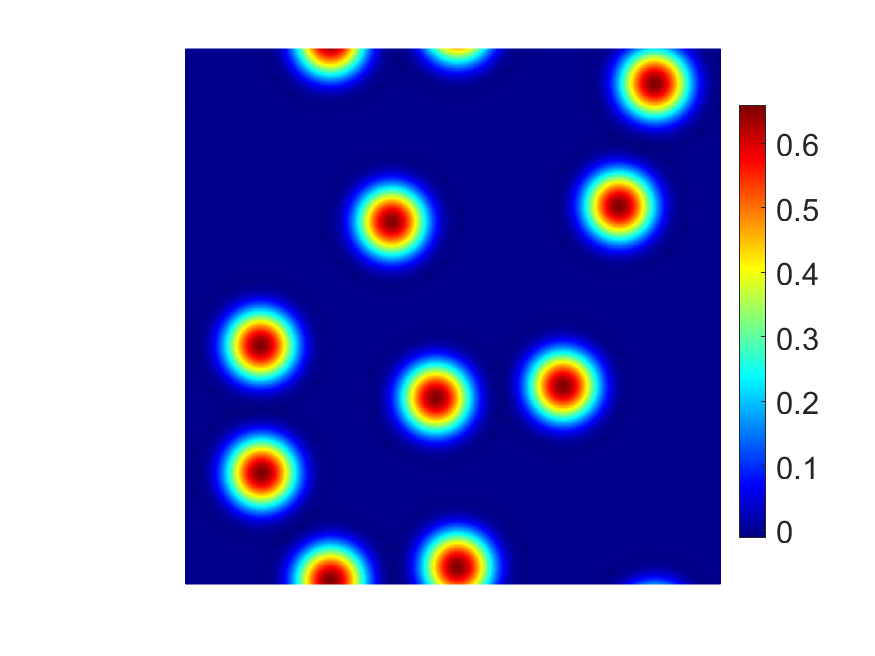}
	}
	\subfigure[$Energy$]{
		\includegraphics[width=3.4cm,height=2.8cm]{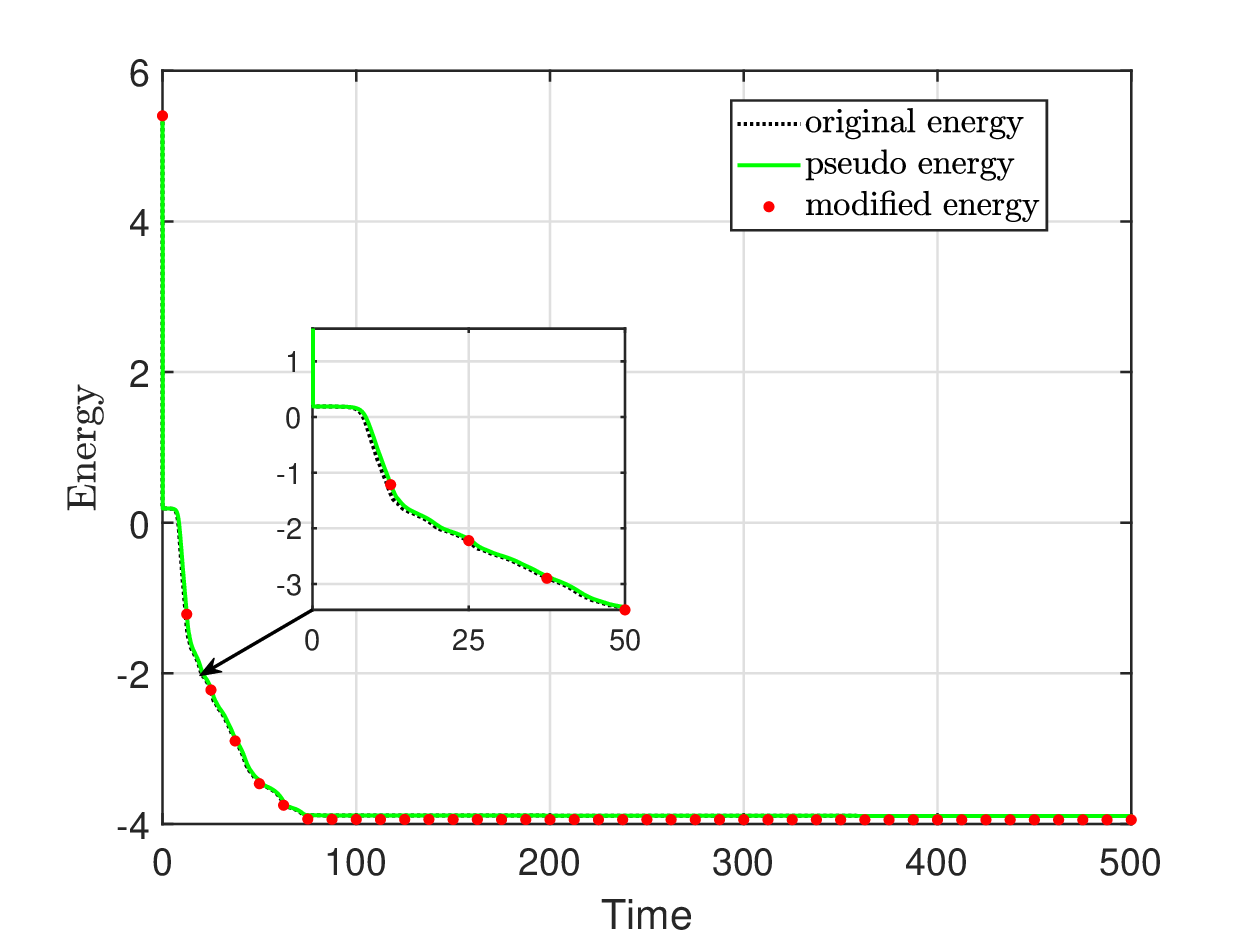}
	}
	\subfigure[$Mass$]{	
		\includegraphics[width=3.4cm,height=2.8cm]{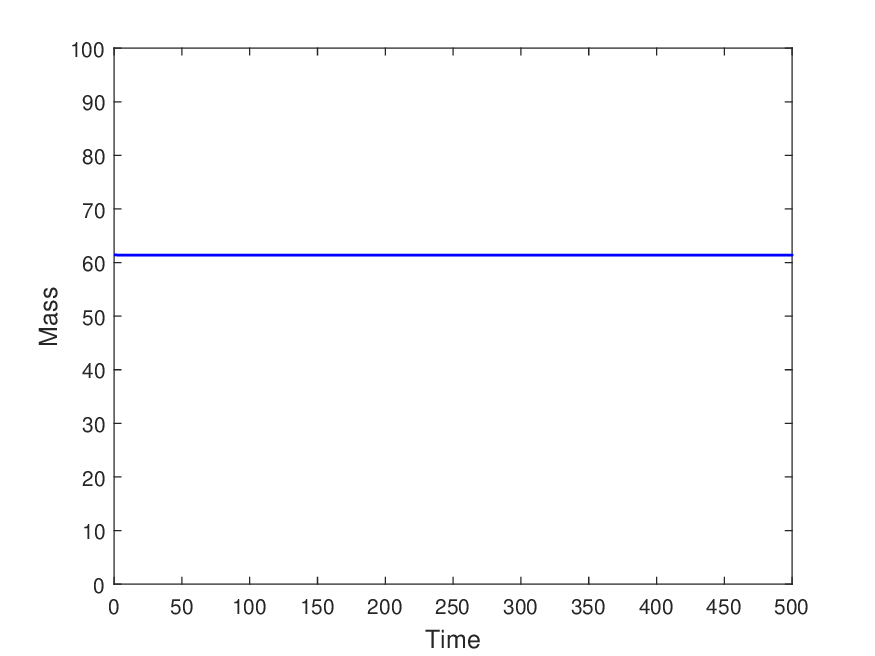}
	}
	\subfigure[$t=50$]{
		\includegraphics[width=3.4cm,height=2.8cm]{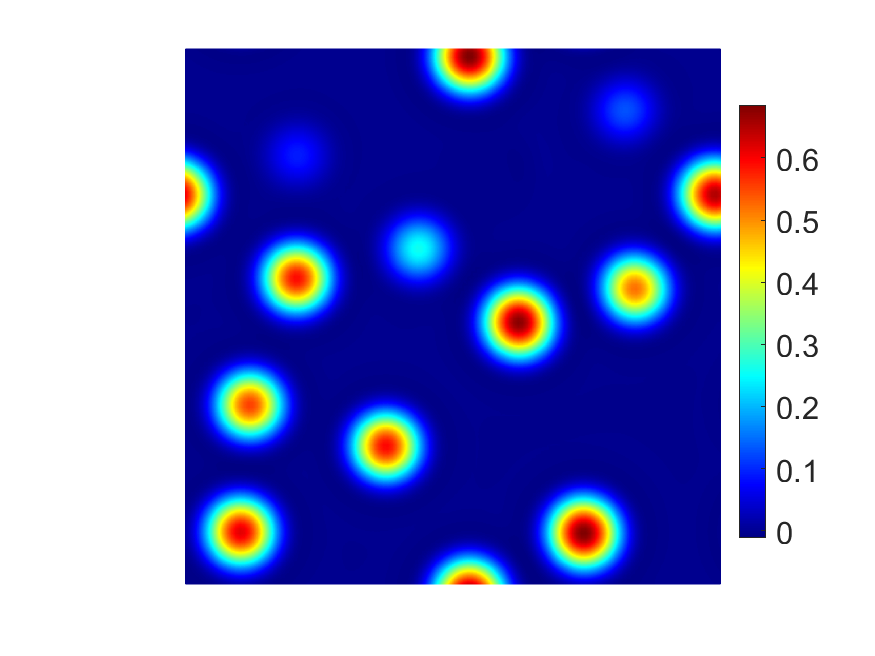}
	}
	\subfigure[$t=500$]{
		\includegraphics[width=3.4cm,height=2.8cm]{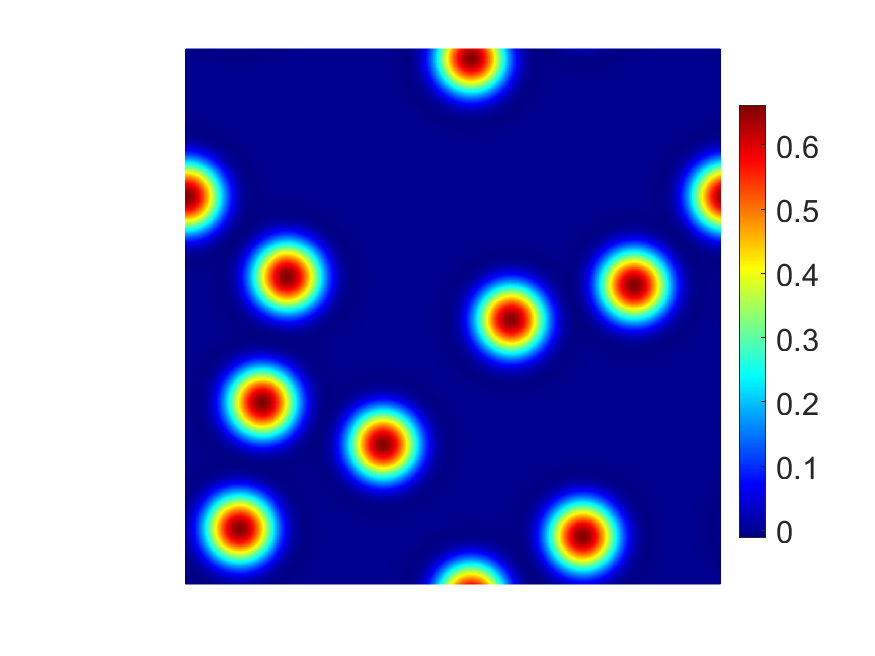}
	}
	\subfigure[$Energy$]{
		\includegraphics[width=3.4cm,height=2.8cm]{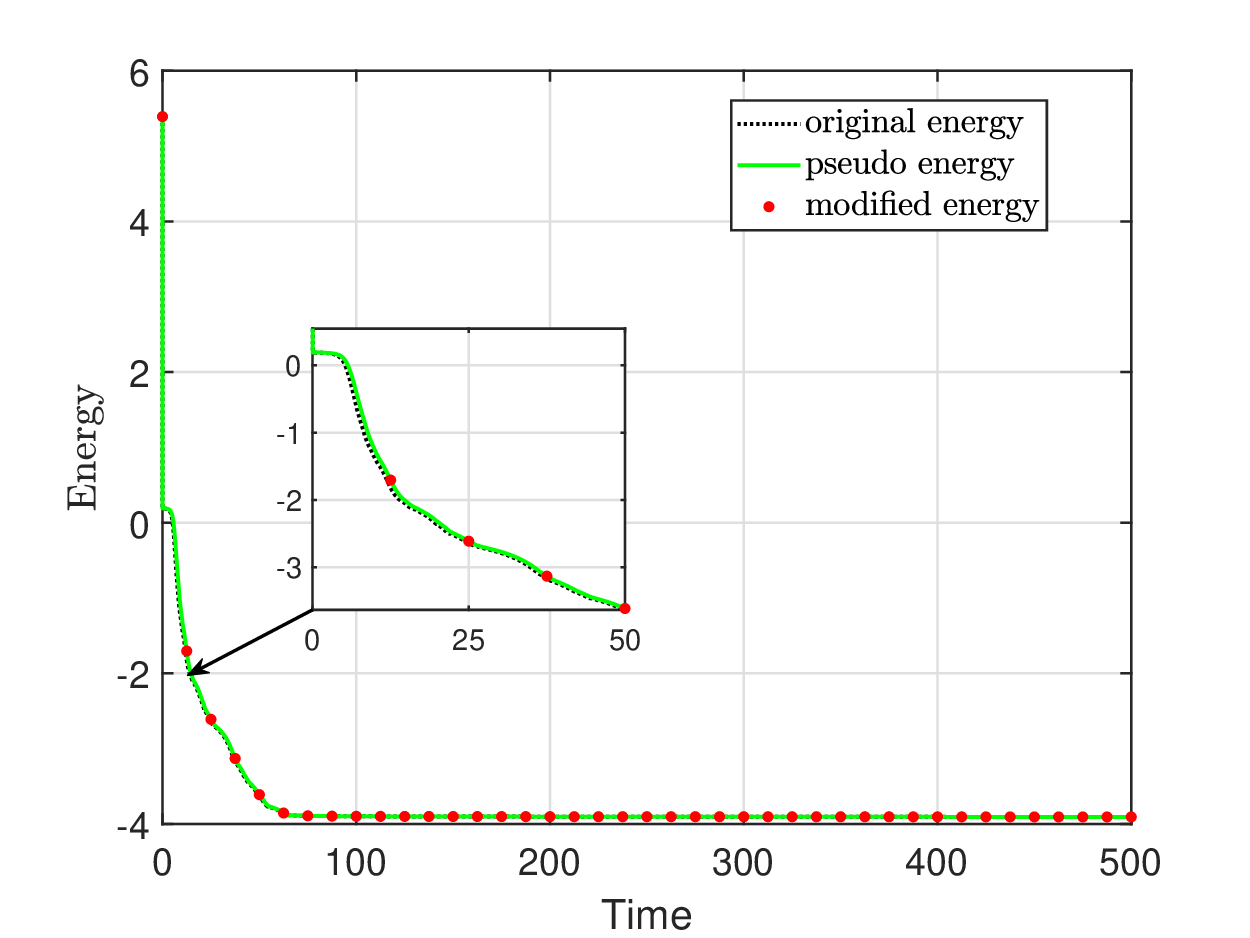}
	}
	\subfigure[$Mass$]{	
		\includegraphics[width=3.4cm,height=2.8cm]{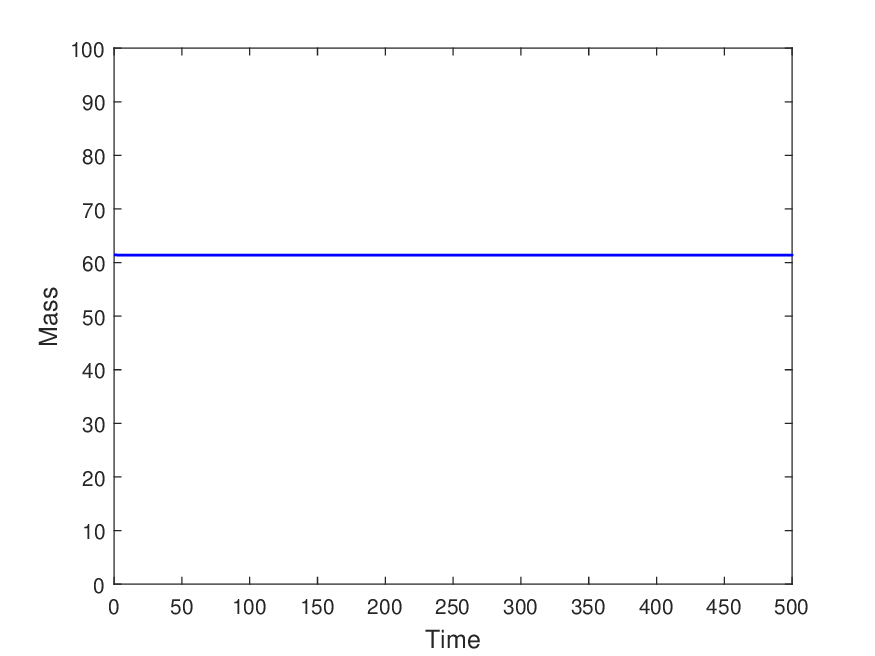}
	}
	\subfigure[$t=50$]{	
		\includegraphics[width=3.4cm,height=2.8cm]{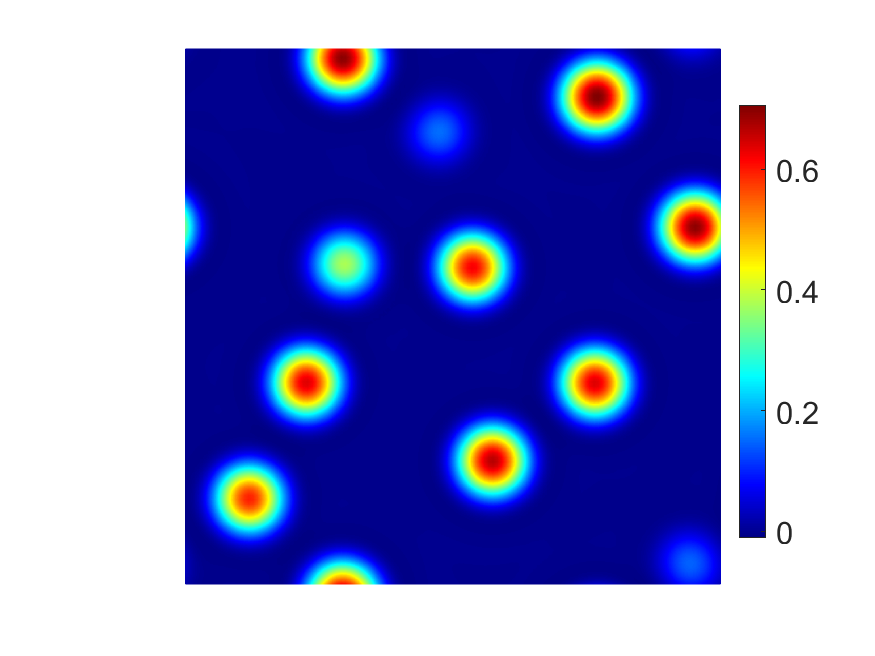}
	}
	\subfigure[$t=500$]{
		\includegraphics[width=3.4cm,height=2.8cm]{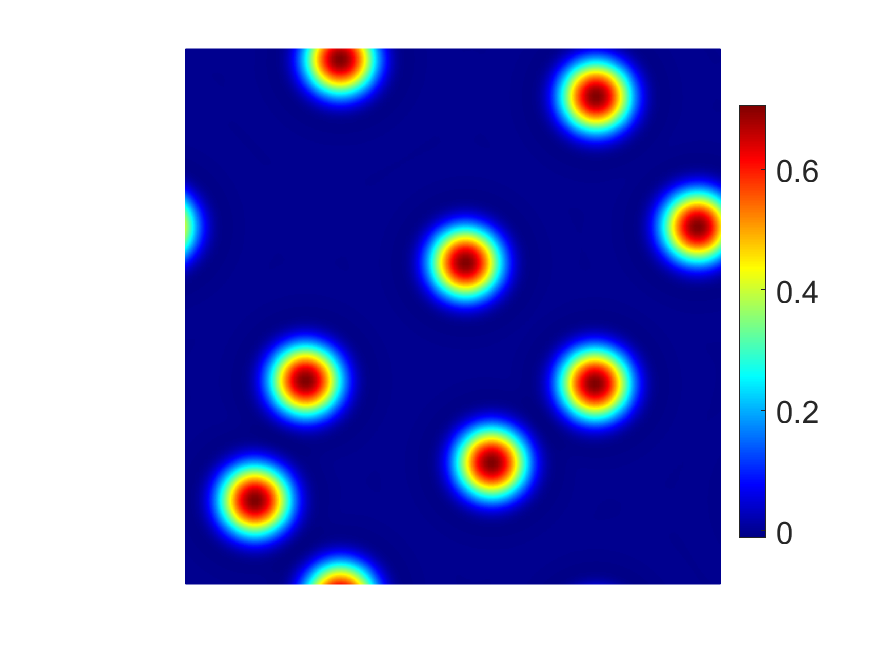}
	}
	\subfigure[$Energy$]{
		\includegraphics[width=3.4cm,height=2.8cm]{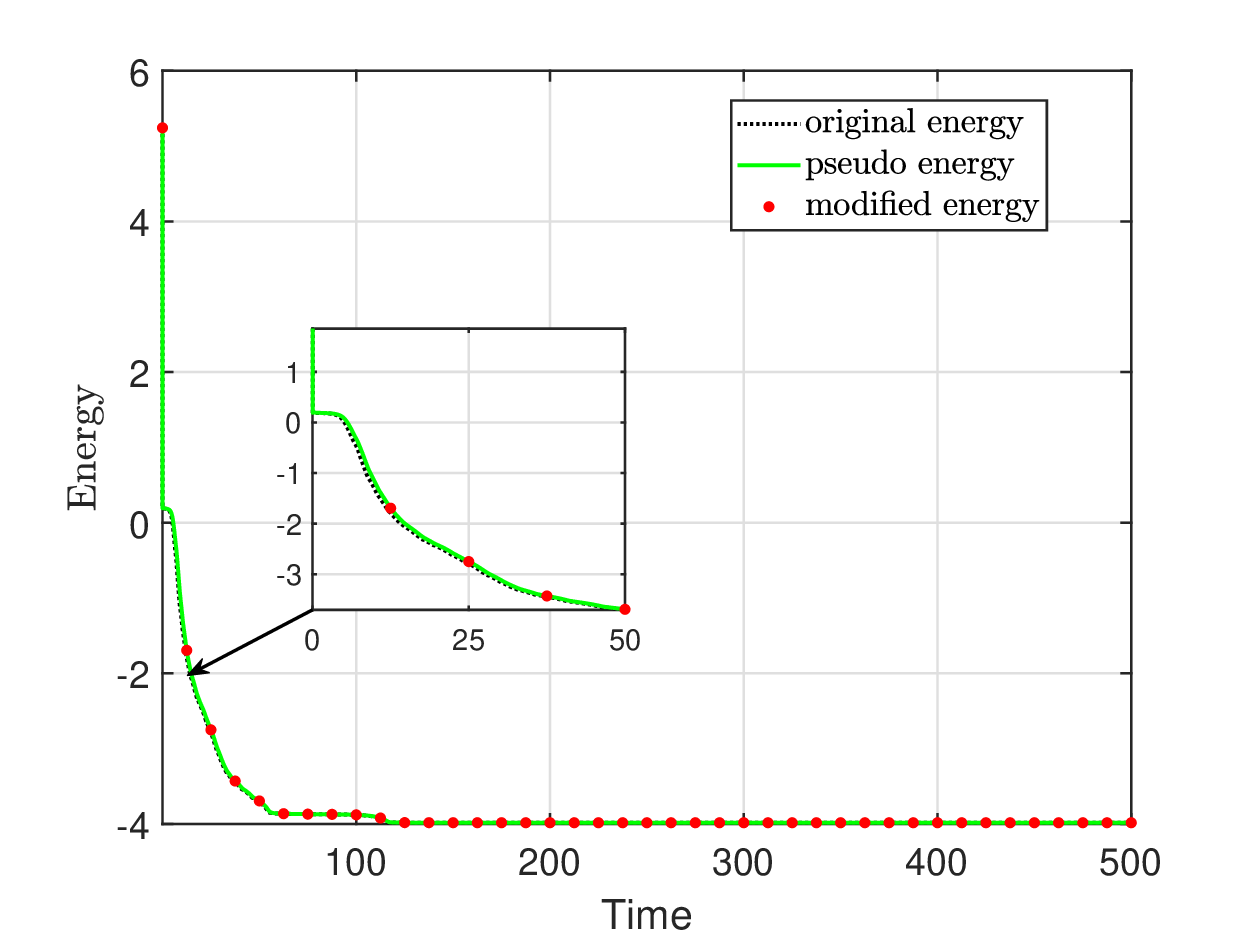}
	}
	\subfigure[$Mass$]{	
		\includegraphics[width=3.4cm,height=2.8cm]{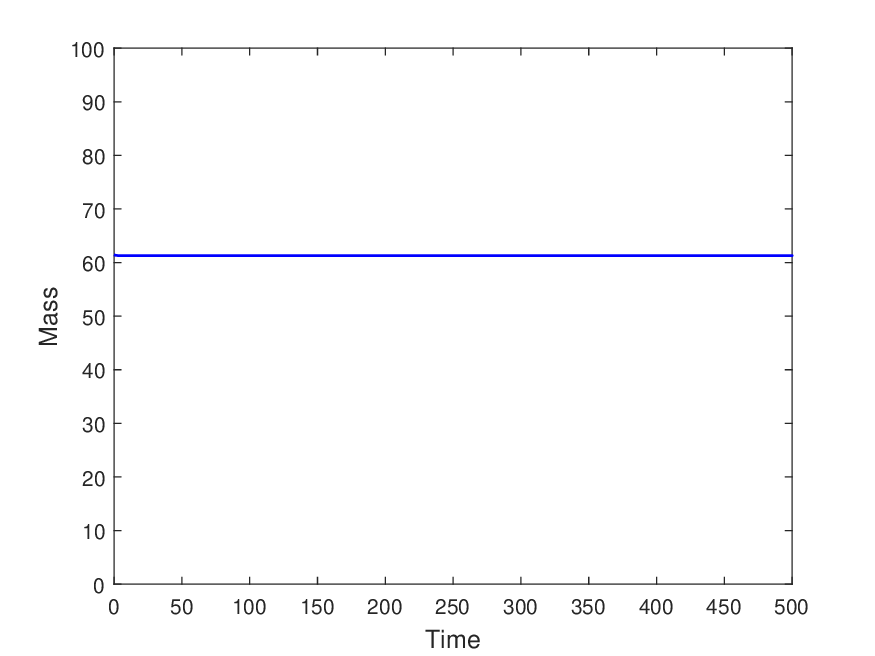}
	}
	\caption{The 2D dynamical behaviors of the phase transition with vacancy potential with $\bar{\phi} = 0.06$ using S-SAV-CN(Row 1), S-GPAV-CN(Row 2), and S-ESAV-CN(Row 3) schemes. Snapshots of the phase variable $\phi$ correspond to $t = 10$ and $500.$}\label{fig:4.6}
\end{figure}
\begin{figure}[htb]
	\centering
	\subfigure[$\bar{\phi}=0.08$]{
		\includegraphics[width=3.2cm,height=2.8cm]{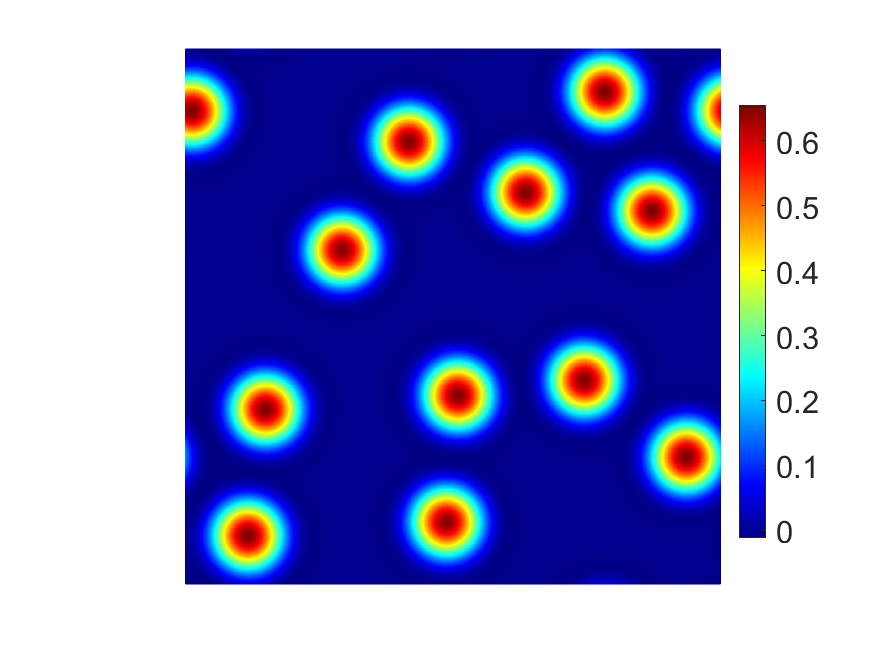}
	}
	\subfigure[$\bar{\phi}=0.10$]{
		\includegraphics[width=3.2cm,height=2.8cm]{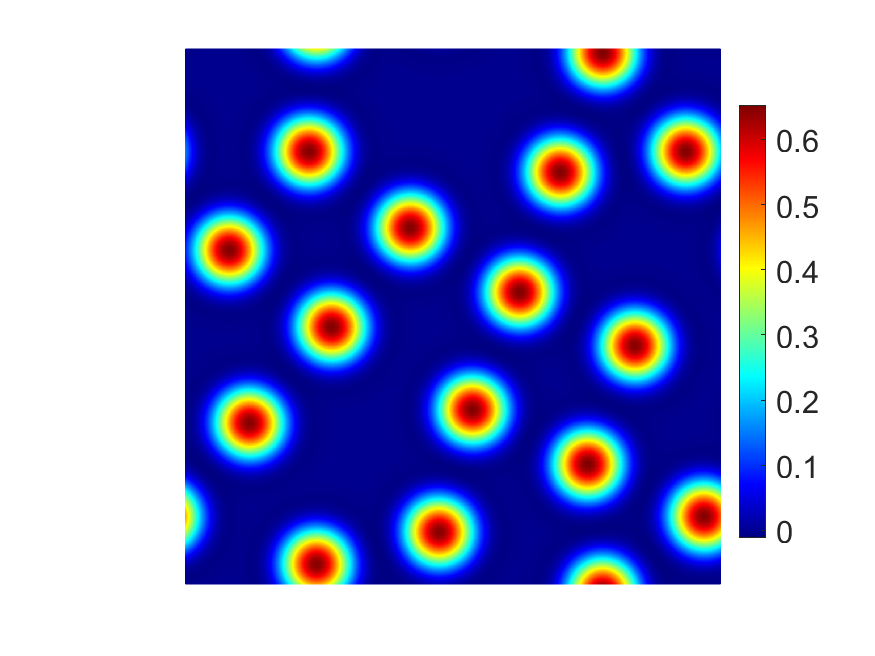}
	}
	\subfigure[$\bar{\phi}=0.12$]{
		\includegraphics[width=3.2cm,height=2.8cm]{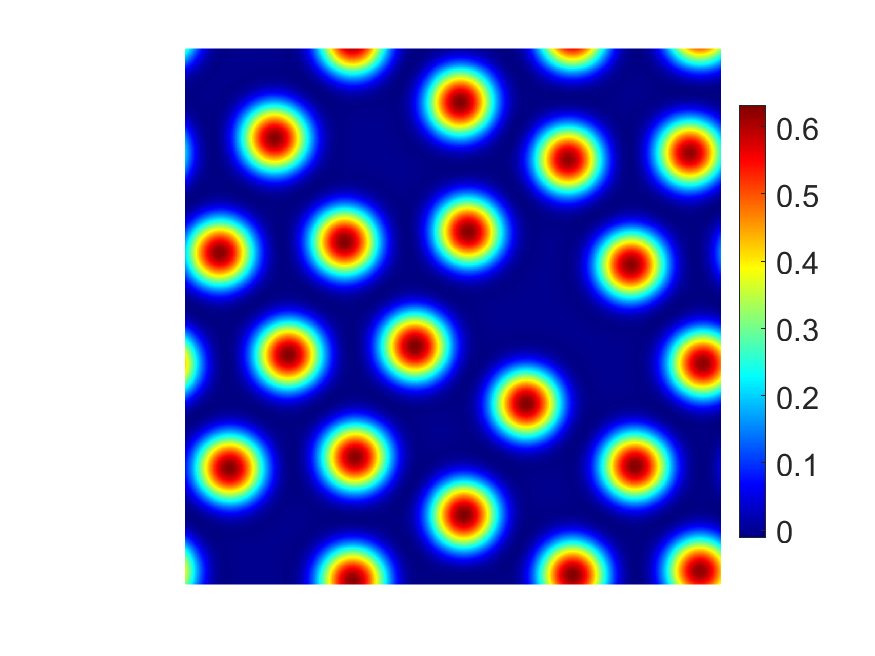}
	}
	\subfigure[$\bar{\phi}=0.18$]{
		\includegraphics[width=3.2cm,height=2.8cm]{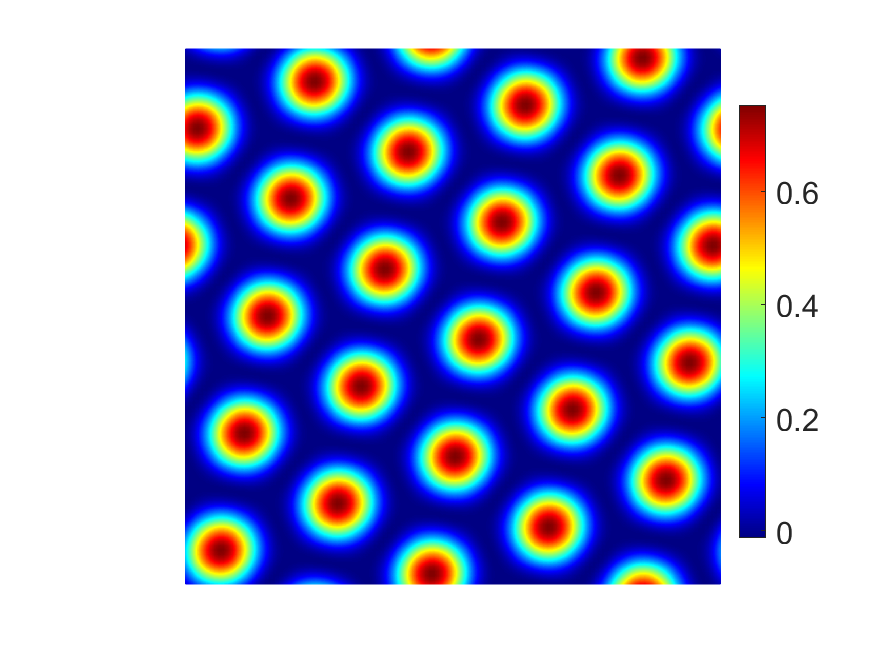}
	}
	\caption{The 2D dynamical behaviors of the phase transition with vacancy potential. Snapshots of the numerical approximation of the phase variable
		$\phi$ are taken at $t=500$ with $\bar{\phi}$ = 0.08, 0.10, 0.12, 0.18.}\label{fig:4.7}
\end{figure}

\subsubsection{Without vacancy potential}
Firstly, we simulate the phase transition dynamics without vacancy potential ($h_{vac}= 0, S=0$) in two-dimensional space. Without loss of generality, we take the SAV-CN numerical scheme \eqref{3.11}-\eqref{3.14} as example.
We set $\delta t = 0.05$ for better accuracy and use $128^{2}$ Fourier modes for spatial discretization in the 2D computational domain $\Omega = [0, 32]^{2}$. The simulation parameters are set as final time $T = 500$,  mobility coefficient $M = 1,$ and model parameters $\varepsilon = 0.9, \alpha= 1,\beta= 0.8$.
Fig.~\ref{fig:4.5} presents the snapshots of the phase variable $\phi(x, t)$ at different discrete time levels: $t = 10$ (Column 1) and $t = 500$ (Column 2), with varying mean densities $\bar{\phi}= 0.06$ (Row 1), $\bar{\phi}= 0.2$ (Row 2), and $\bar{\phi}= 0.4$ (Row 3). 

The simulations generate distinct pattern formations, including striped in Fig.~\ref{fig:4.5}(b), hybrid (stripes + triangles) in Fig.~\ref{fig:4.5}(f), and triangular in Fig.~\ref{fig:4.5}(j) configurations, in agreement with previous numerical results for MPFC model \cite{24baskaran2013energy,27qian2025error,28li2022efficient,16li2019efficient}.
Column 3 of Fig.~\ref{fig:4.5} plots the temporal evolution of three energy functionals: the original energy \eqref{2.1}, the pseudo-energy \eqref{2.9}, and the modified energy \eqref{3.3}. All of these exhibit monotonic decay, which numerically confirms the unconditional energy stability of the SAV-CN scheme. 	 	 
Column 4 of Fig.~\ref{fig:4.5} shows the temporal evolution of mass, numerically verifying the mass conservation property.
\subsubsection{With vacancy potential}
To investigate the effect of the vacancy potential, we set the parameters as $h_{vac}= 3000, S = 60,M = 1,\varepsilon= 0.9,\alpha= 1,$ and $\beta= 0.8$ to simulate the phase transition behaviors in 2D. Related numerical experiments are documented in References \cite{5chan2009molecular,31zhang2019efficient,32pei2022efficient,33zhang2023highly}. We use $128^{2}$ Fourier modes to discretize the computational domain $\Omega= [0, 32]^{2}$.
As shown in the first two columns of Fig.~\ref{fig:4.6}, snapshots at $t = 10$ and $500$ (with $\bar{\phi}= 0.06$) are presented, computed using the S-SAV-CN, S-GPAV-CN, and S-ESAV-CN schemes respectively.
Notably, these results exhibit marked differences from the $h_{vac}=0$ case (Fig.~ \ref{fig:4.5}). In this case, no periodic states are present throughout the domain, and a large number of vacancies appear.
To verify the unconditional energy stability of the three schemes, the temporal evolution curves of the original energy \eqref{2.1}, the pseudo energy \eqref{2.9}, and the modified energies (\eqref{3.3}, \eqref{3.40}, and \eqref{3.63}) are presented in the third column of Fig.~\ref{fig:4.6}. The results indicate that these three energy functions agree well, and the energy curves decay monotonically as expected.
The fourth column of Fig.~\ref{fig:4.6} displays the mass-time evolution curves for the three schemes,  numerically verifying mass conservation. Fig.~\ref{fig:4.7} presents the phase field $\phi$ distributions at $t=500$ with varying mean densities $\bar{\phi} = 0.08, 0.10, 0.12,$ and $0.18$, computed with the S-SAV-CN scheme. These results demonstrate a positive correlation between atomic density and $\bar{\phi}$. The number of atoms increases with increasing $\bar{\phi}$.
\begin{figure}[t]
	\centering
	\subfigure[$\bar{\phi}=0.04$]{
		\includegraphics[width=3.4cm,height=2.8cm]{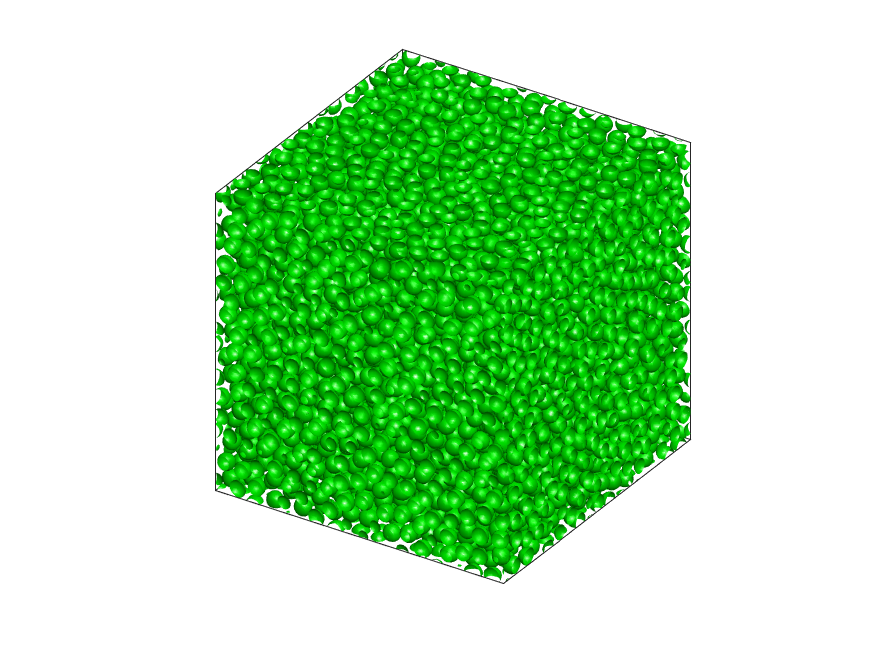}
		\includegraphics[width=3.4cm,height=2.8cm]{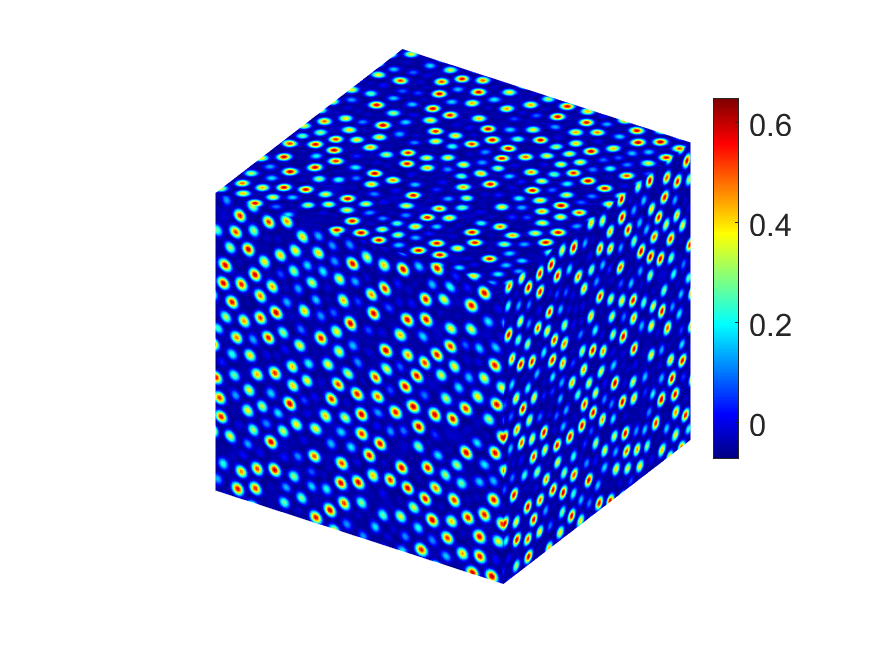}
		\includegraphics[width=3.4cm,height=2.8cm]{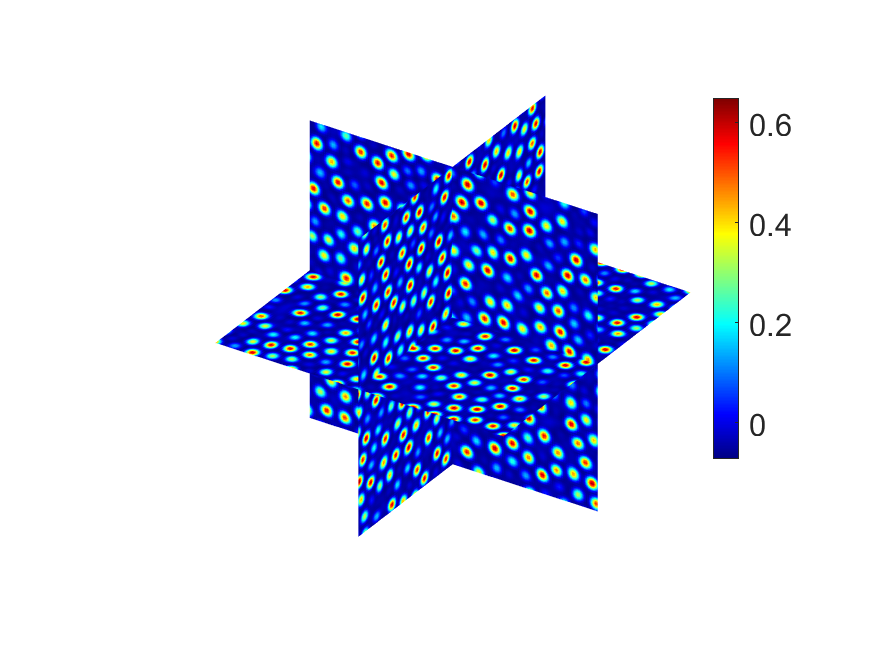}
	}
	\subfigure[$\bar{\phi}=0.06$]{
		\includegraphics[width=3.4cm,height=2.8cm]{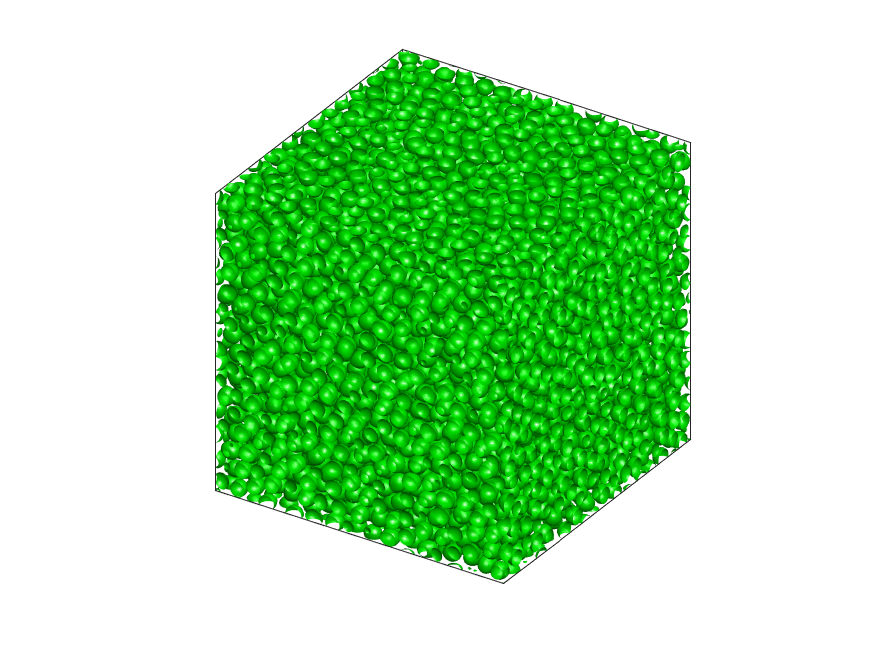}
		\includegraphics[width=3.4cm,height=2.8cm]{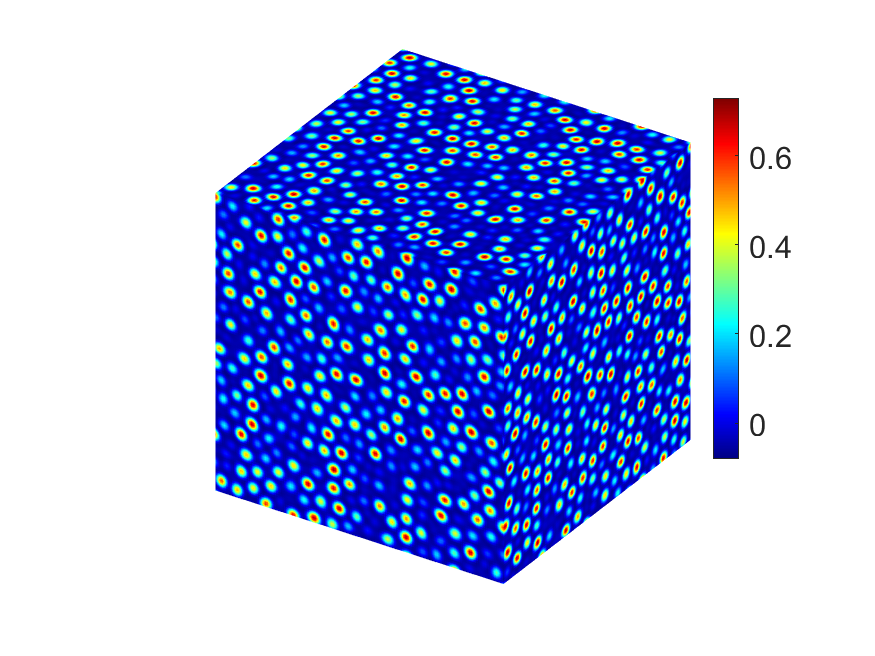}
		\includegraphics[width=3.4cm,height=2.8cm]{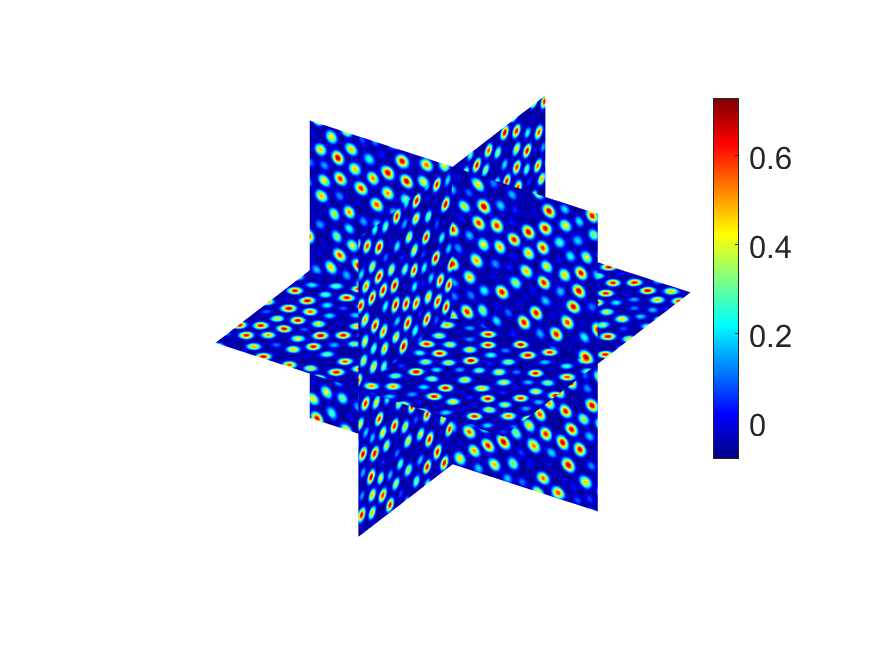}
	}
	\subfigure[$\bar{\phi}=0.10$]{
		\includegraphics[width=3.4cm,height=2.8cm]{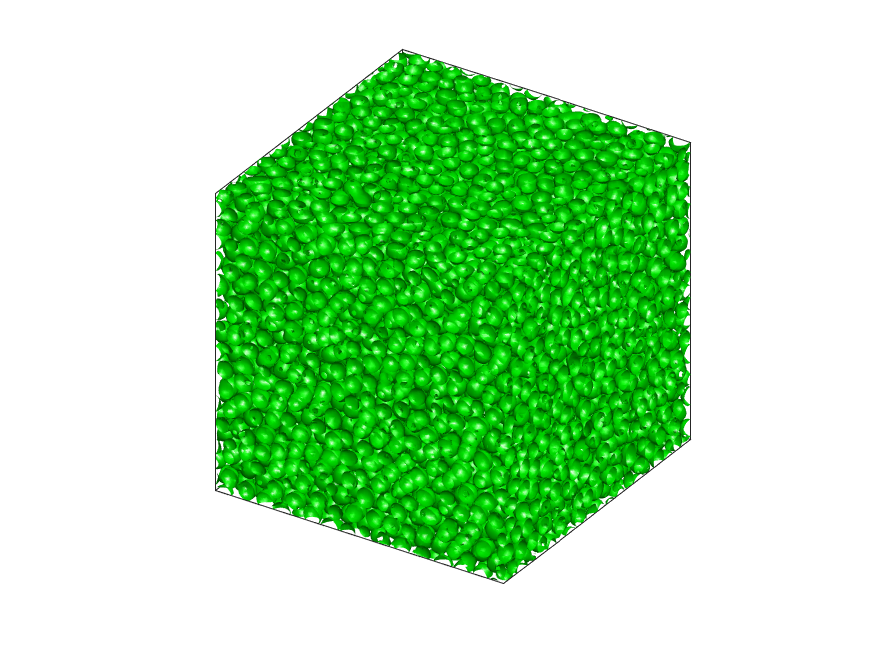}
		\includegraphics[width=3.4cm,height=2.8cm]{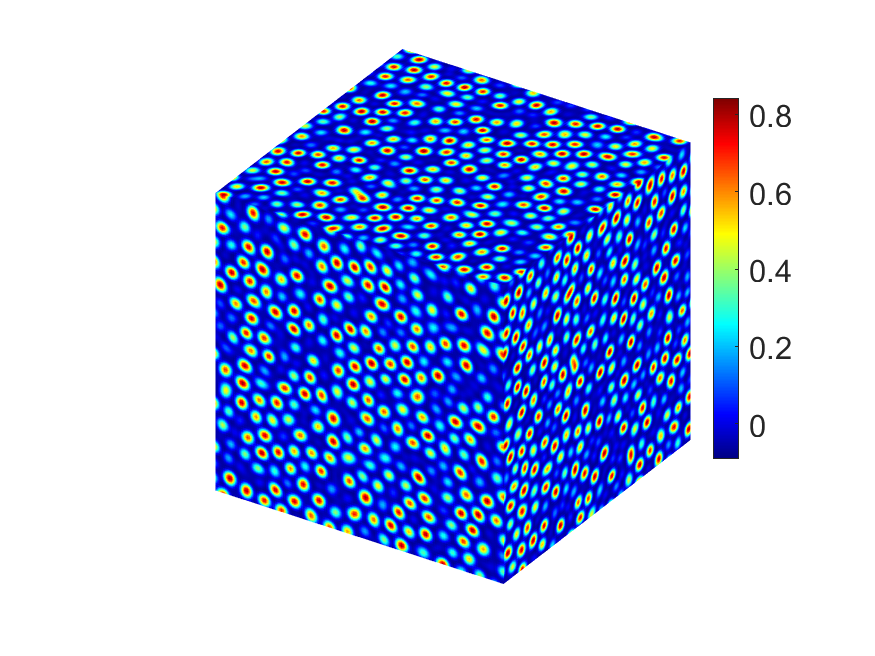}
		\includegraphics[width=3.4cm,height=2.8cm]{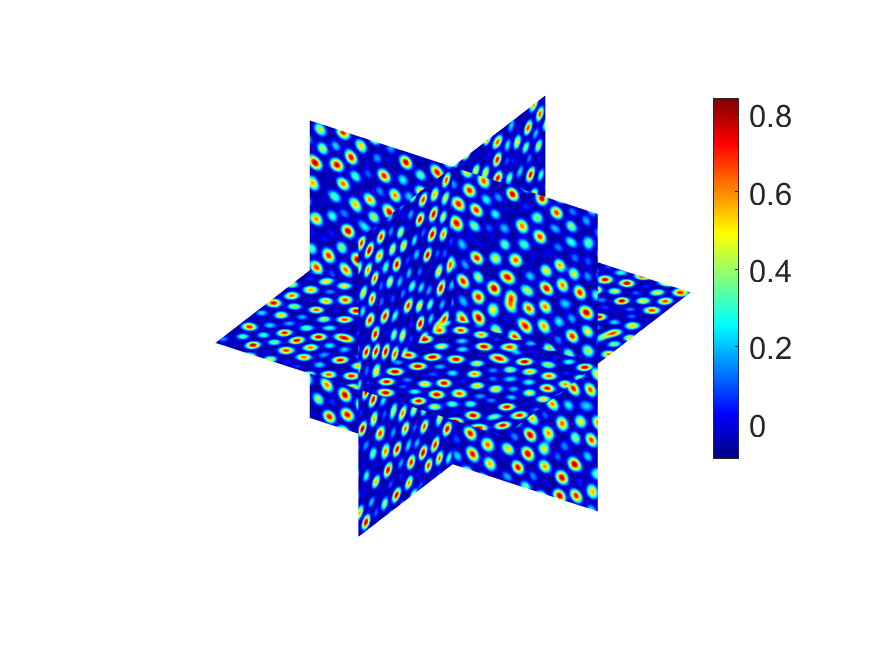}
	}	
	\caption{The steady-state microstructure of the phase transition behavior of the phase transition behavior with vacancy potential in 3D: isosurface plots of $\{\textbf{x}|\phi(\textbf{x})=0\}$ (column 1), snapshots of the phase variable $\phi$ (column 2) and three cross-section of the density field $\phi$ (column 3).}\label{fig:4.8}
\end{figure}
\begin{figure}[htb]
	\centering
	\subfigure[$\bar{\phi}=0.04$]{
		\includegraphics[width=3.4cm,height=3cm]{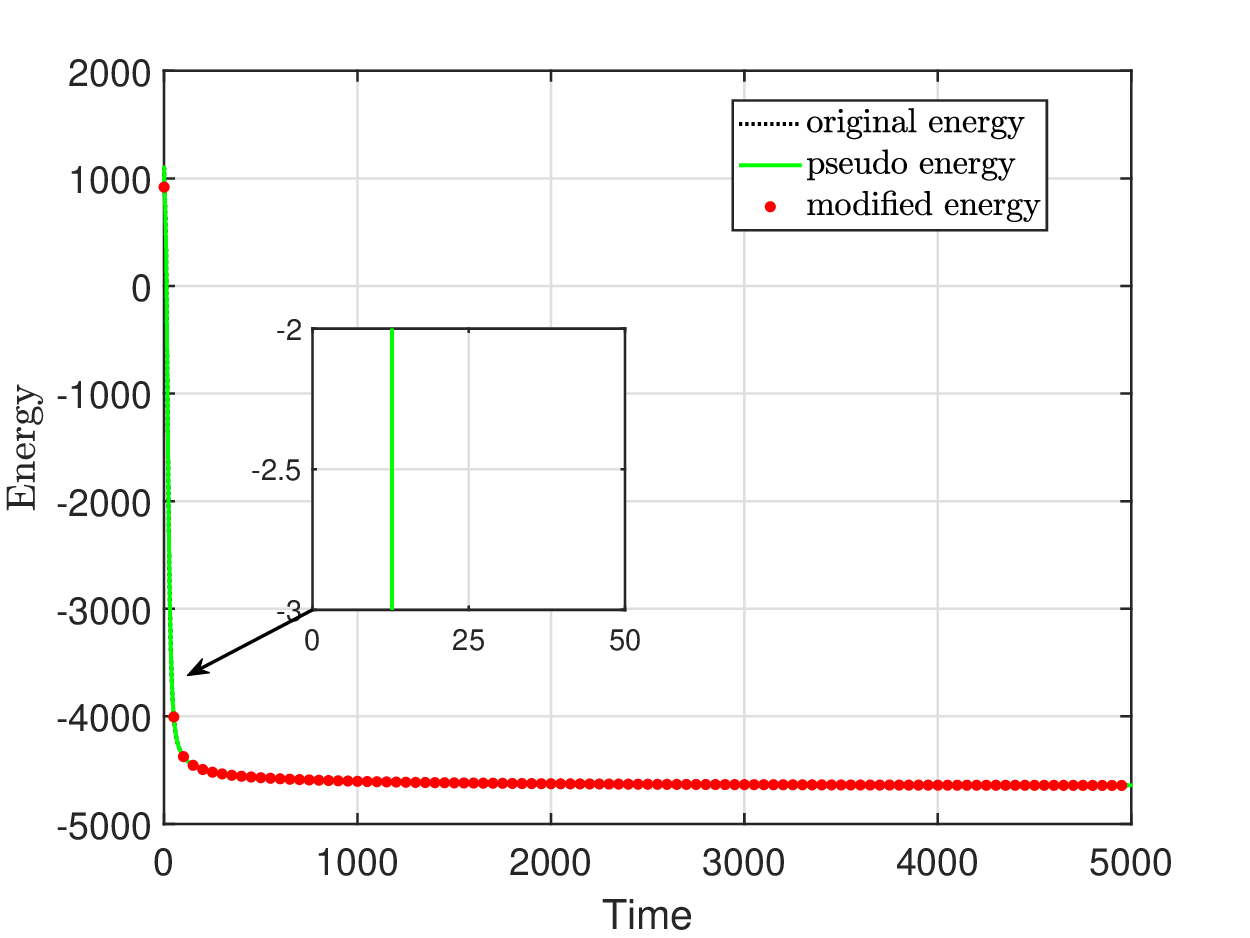}
	}
	\subfigure[$\bar{\phi}=0.06$]{
		\includegraphics[width=3.4cm,height=3cm]{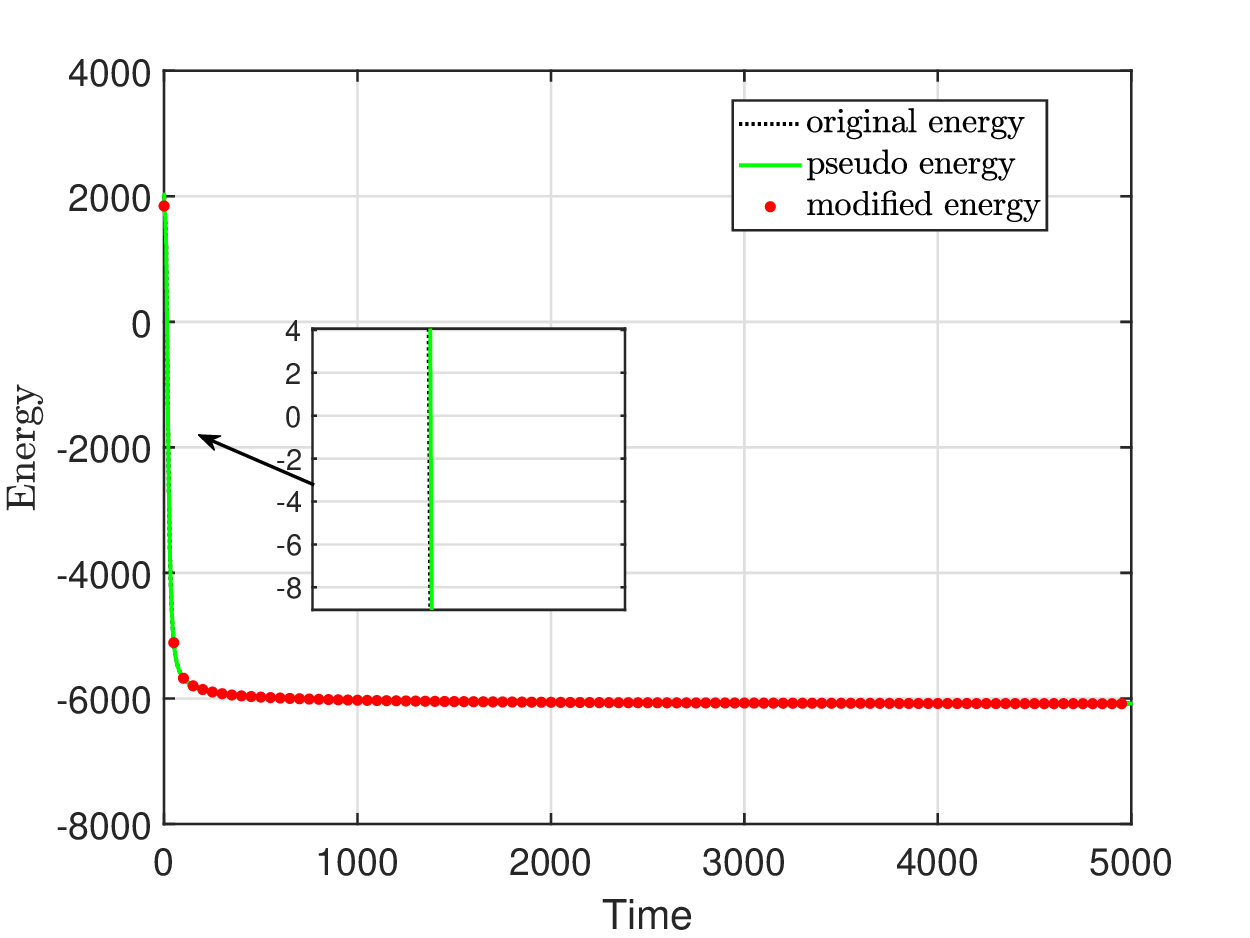}
	}
	\subfigure[$\bar{\phi}=0.10$]{
		\includegraphics[width=3.4cm,height=3cm]{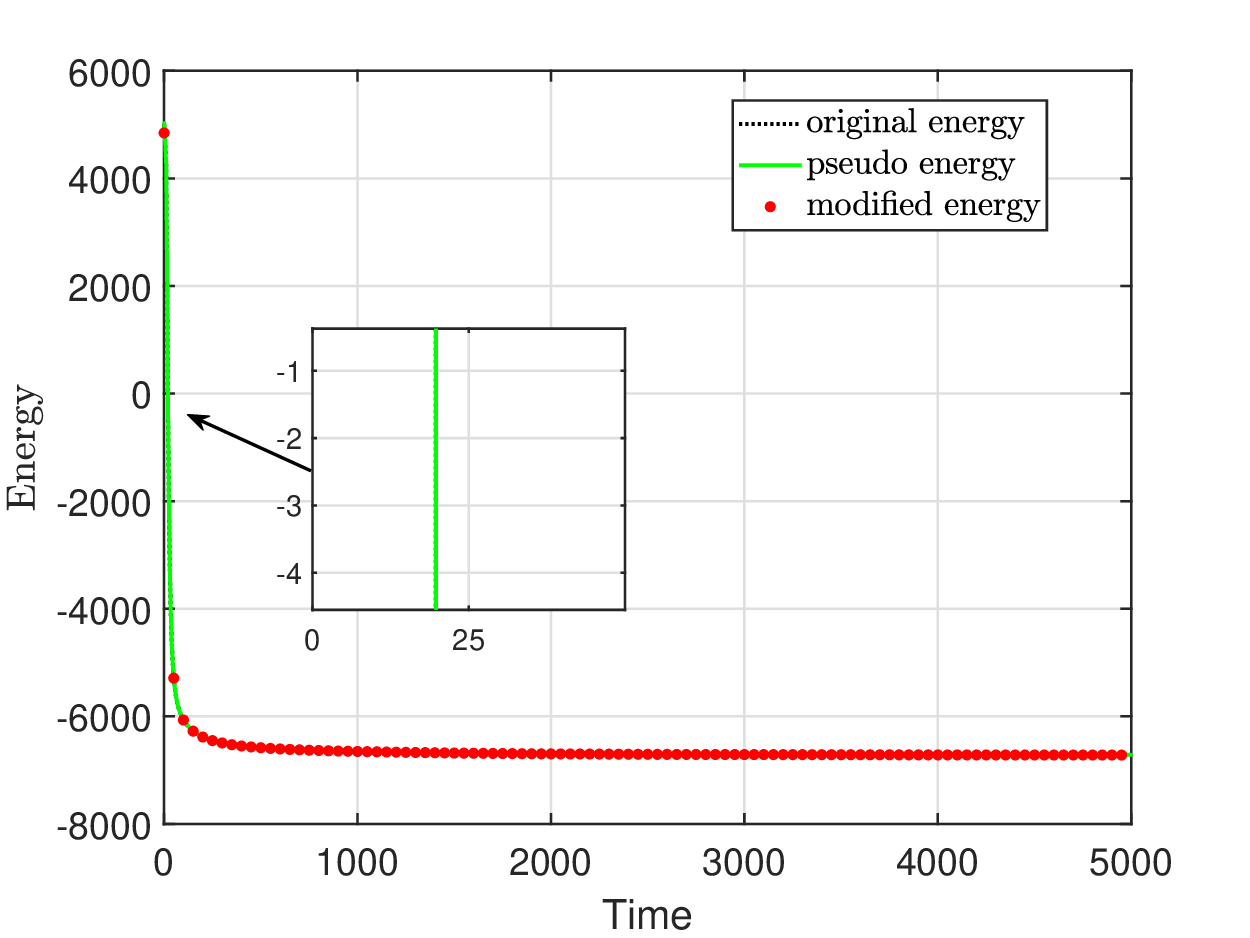}
	}
	\caption{The temporal evolution of energies of the phase transition behavior with vacancy potential in 3D with $\bar{\phi}=0.04$, $0.06$, and $0.10$, respectively.}\label{fig:4.9}
\end{figure}

Finally, we examine the phase transition simulations with vacancy potential in 3D. 
The simulations are performed in the computational domain $\Omega =[0,128]^3$ discretized using $128^3$ Fourier modes, with fixed parameters $T=5000$, $\Delta t=0.1$, $M=1$, $\alpha=0.1$, $\beta=1$, and $\epsilon=0.56$. 
Fig.~\ref{fig:4.8}(a) displays the steady-state 3D microstructure with mean density $\bar{\phi}=0.04$, showing isosurfaces $\{\textbf{x}|\phi(\textbf{x})=0\}$ in the first column, density field $\phi$ snapshots in the second column, and three cross-sectional views of $\phi$ in the third column. The results clearly demonstrate significant vacancy formation in the atomic arrangement.
When increasing the mean density to $\bar{\phi}=0.06$ (Fig.~\ref{fig:4.8} (b) and $\bar{\phi}=0.10$ (Fig.~\ref{fig:4.8}(c)), the simulations reveal density-dependent behavior of atomic number analogous to the 2D case, demonstrating substantially enhanced atomic number at higher density values $\bar{\phi}$.
Fig.~\ref{fig:4.9} further confirms the unconditional energy stability of the S-SAV-CN scheme through the monotonic temporal decay of three energy formulations: the original energy \eqref{2.1}, pseudo energy \eqref{2.9}, and modified energy \eqref{3.3}. These numerical results show complete consistency with the established phase diagram in References~\cite{5chan2009molecular,31zhang2019efficient,32pei2022efficient,33zhang2023highly,43lee2024linear}.

\begin{figure}[t]
	\centering
	\subfigure[$t=200$]{
		\includegraphics[width=3.4cm,height=2.8cm]{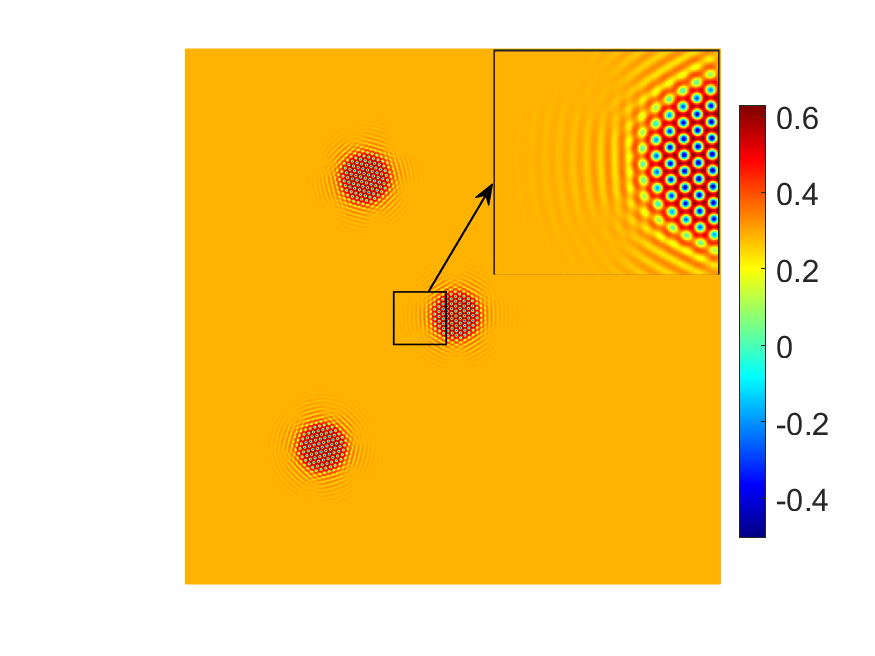}
	}
	\subfigure[$t=500$]{
		\includegraphics[width=3.4cm,height=2.8cm]{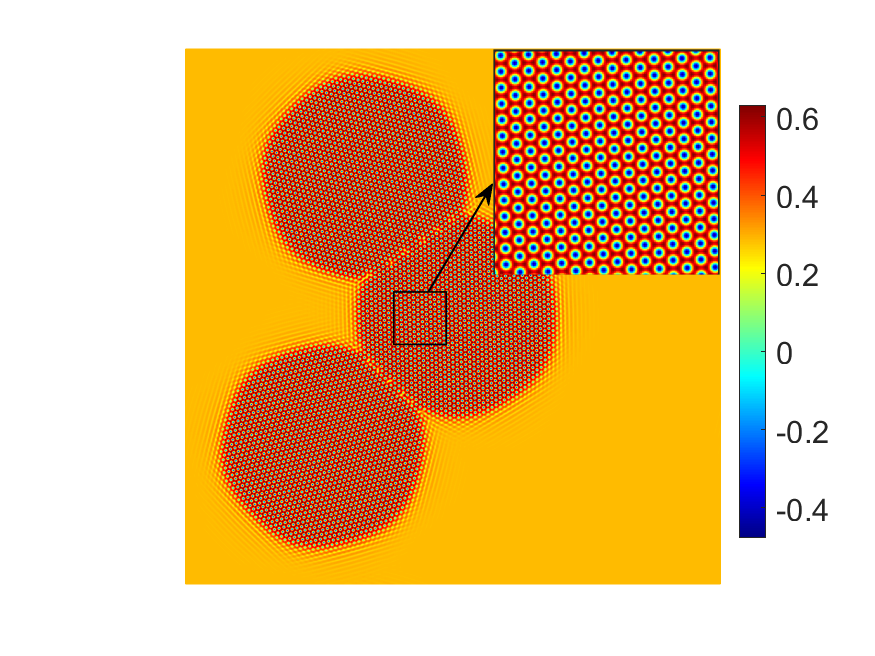}
	}
	\subfigure[$t=800$]{
		\includegraphics[width=3.4cm,height=2.8cm]{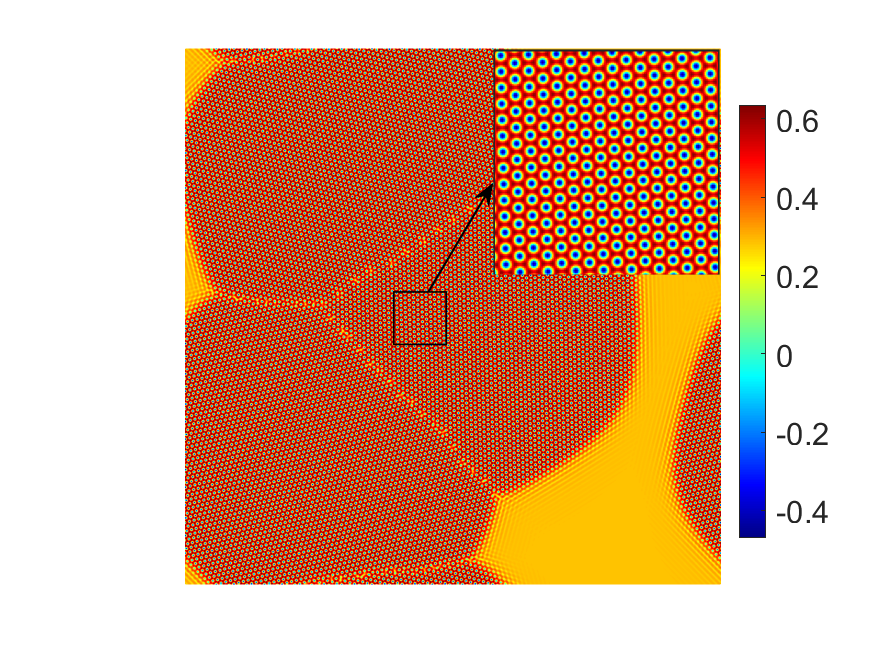}
	}
	\subfigure[$t=2000$]{
		\includegraphics[width=3.4cm,height=2.8cm]{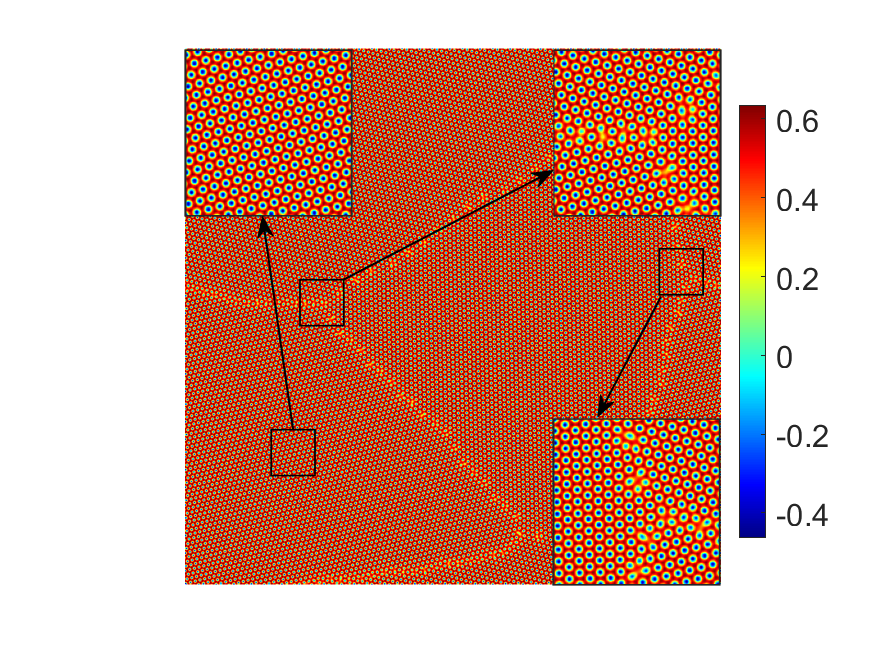}
	}
	\caption{The 2D dynamical behaviors of the crystal growth without vacancy potential. Snapshots of the numerical approximation of the phase variable
		$\phi$ are taken at $t = 200, 50, 800,$ and $2000$.}\label{fig:4.10}
\end{figure}
\begin{figure}[htb]
	\centering	
	\subfigure[$Energy$]{
		\includegraphics[width=4cm,height=3cm]{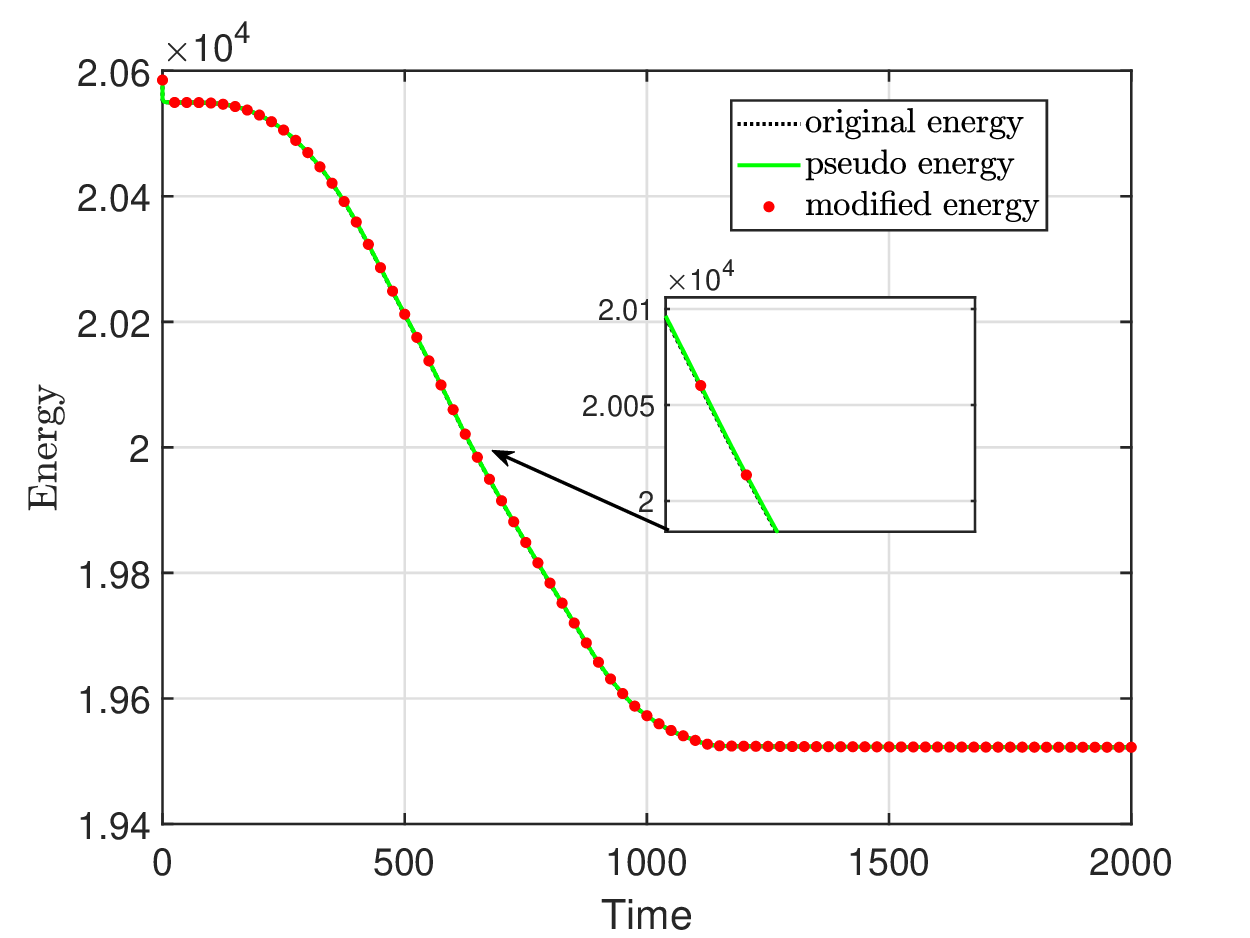}
	}	
	\subfigure[$Mass$]{
		\includegraphics[width=4cm,height=3cm]{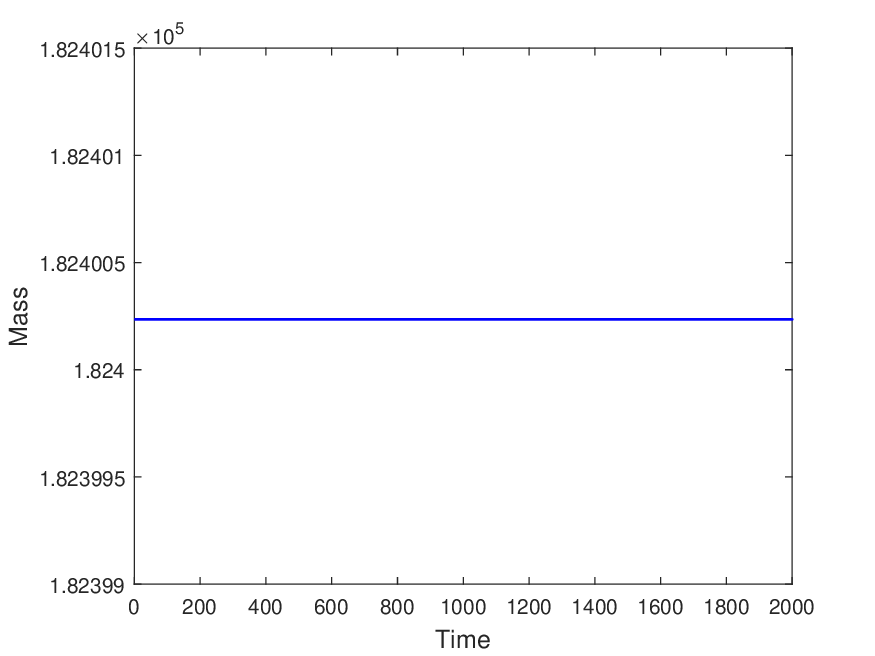}
	}
	\caption{The temporal evolution of energies and mass of the crystal growth without vacancy potential using S-GPAV-CN scheme.}\label{fig:4.11}
\end{figure}
\subsection{Crystal growth}
In this example, we simulate the poly-crystal growth in a supercooled liquid to study the dynamical process. The 2D computational domain is set to $[0, 800]^{2}$, discretized using  $1024^{2}$ Fourier modes.
We set the initial configuration $\phi(\textbf{x}, 0) =\overline{\phi}$ is a constant and then we change the values of $\overline{\phi}$ by setting three crystallites in small square patches.
We define each small crystallite as follows:
$$\phi(x_{l},y_{l})=\overline{\phi}+C(cos(\frac{q}{\sqrt{3}}y_{l})cos(qx_{l})-0.5cos(\frac{2q}{\sqrt{3}}y_{l})),\quad l=1,2,3,  $$
where $(x_{l}$, $y_{l})$ define a local system of Cartesian coordinates using an affine transformation of the global coordinates $(x, y)$ by $x_{l}(x,y)=xsin\theta+ycos\theta, y_{l}(x,y)=-xcos\theta+ysin\theta.$
Here, we choose $\theta= 0, -\pi/4$ and $\pi/4$ to generate three crystallite lattices with different orientations.
\begin{figure}[t]
	\centering
	\subfigure[$S-SAV-CN$]{
		\includegraphics[width=4cm,height=3cm]{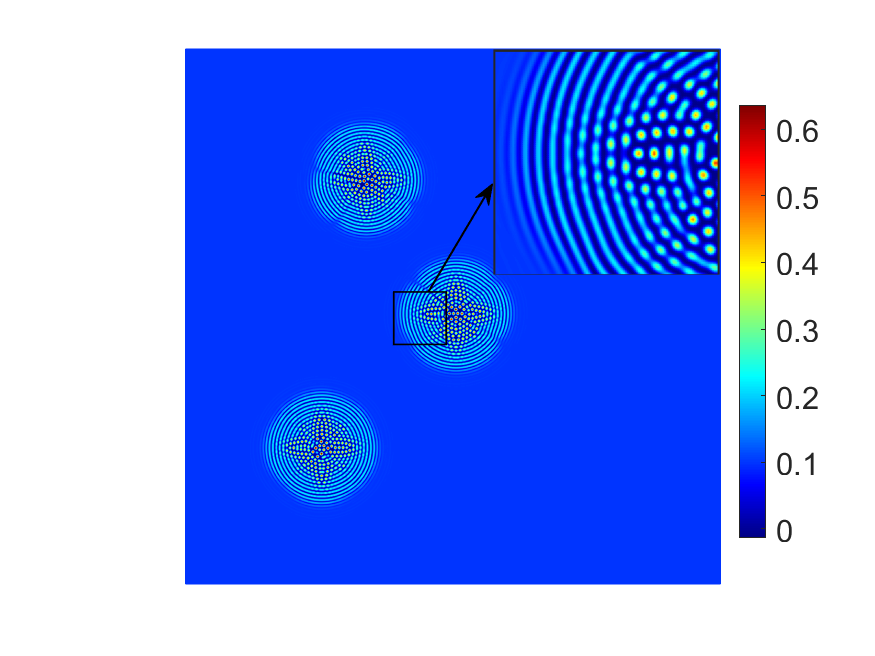}
		\includegraphics[width=4cm,height=3cm]{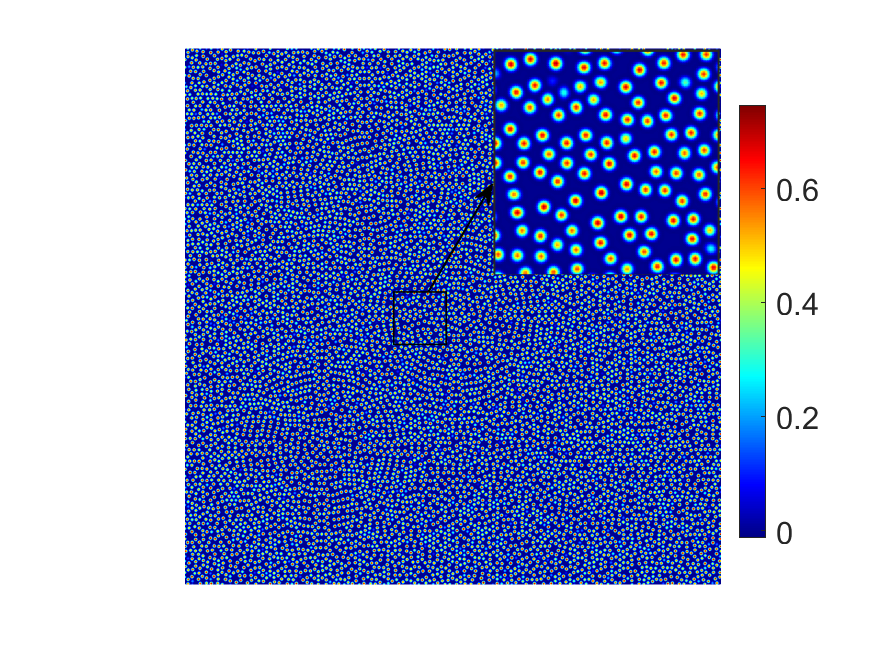}
		\includegraphics[width=4cm,height=3cm]{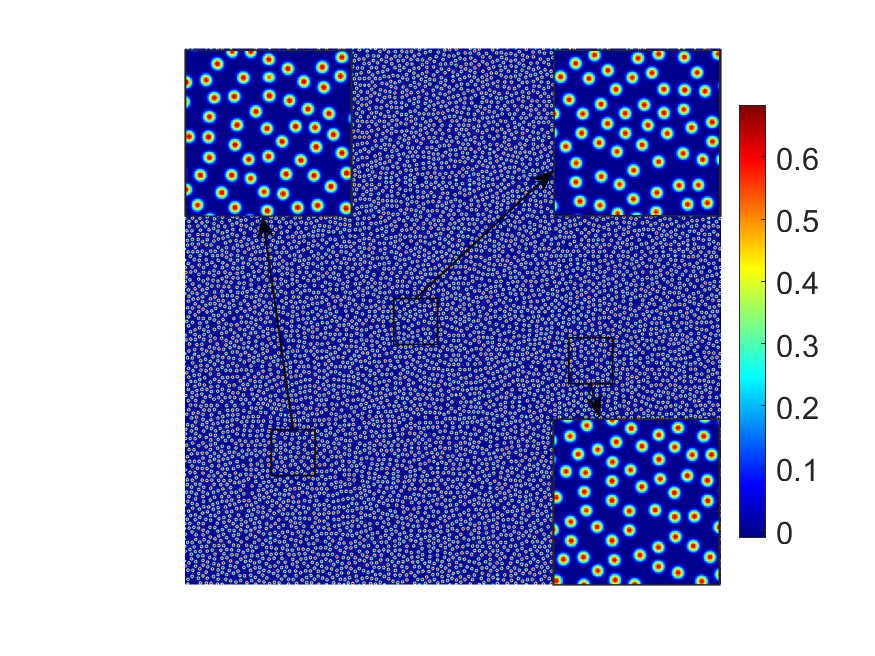}
	}
	\subfigure[$S-GPAV-CN$]{
		\includegraphics[width=4cm,height=3cm]{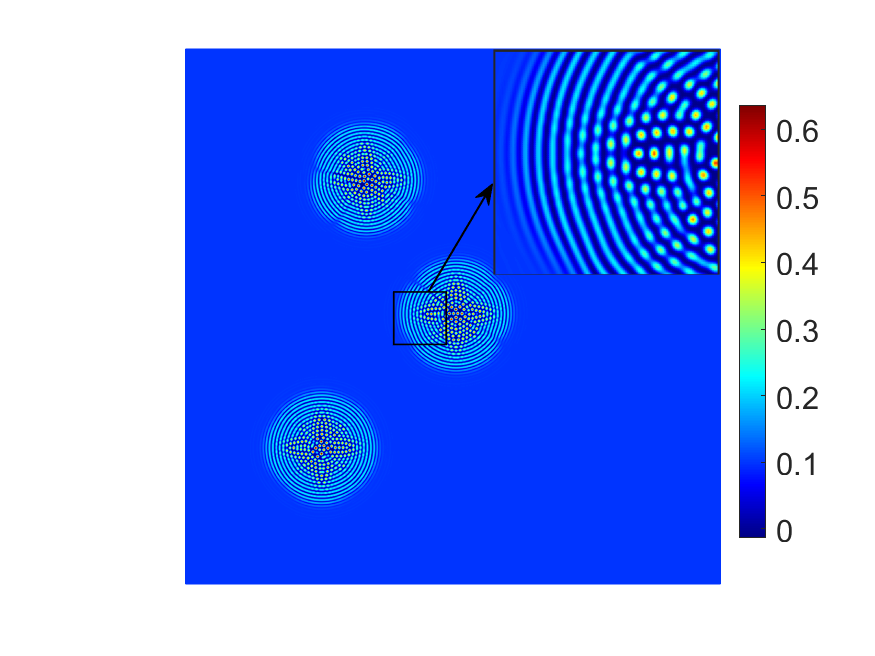}
		\includegraphics[width=4cm,height=3cm]{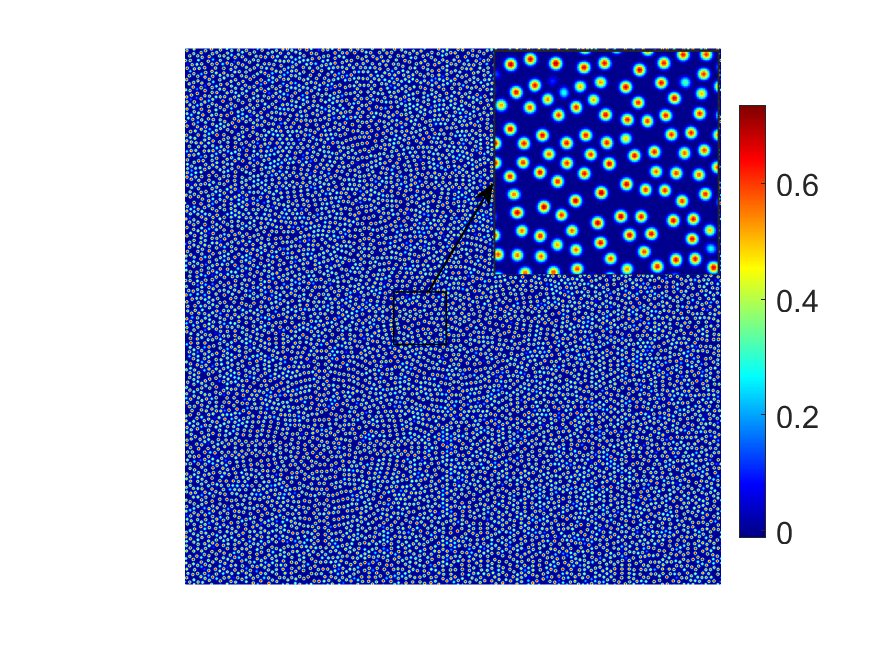}
		\includegraphics[width=4cm,height=3cm]{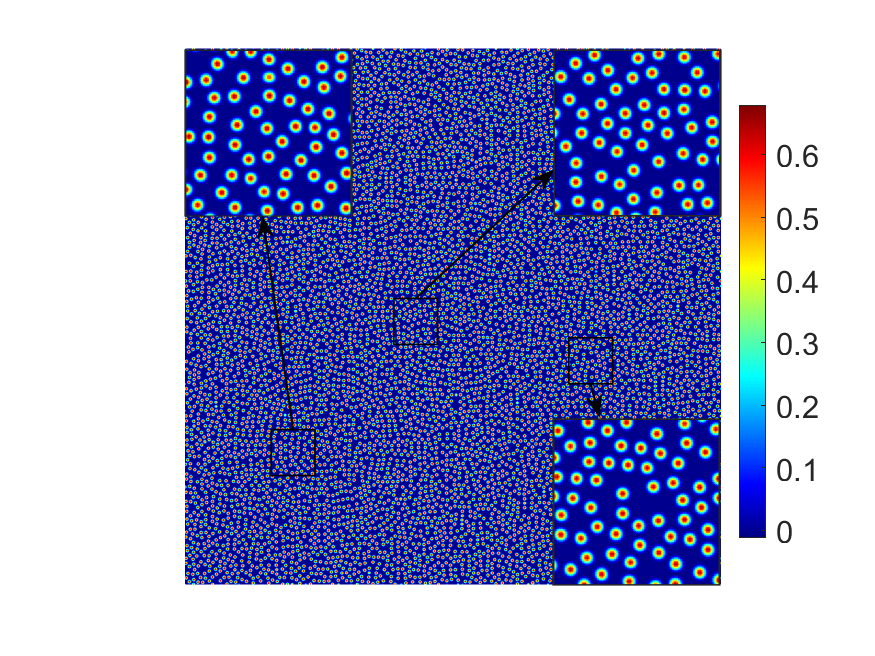}
	}	
	\subfigure[$S-ESAV-CN$]{
		\includegraphics[width=4cm,height=3cm]{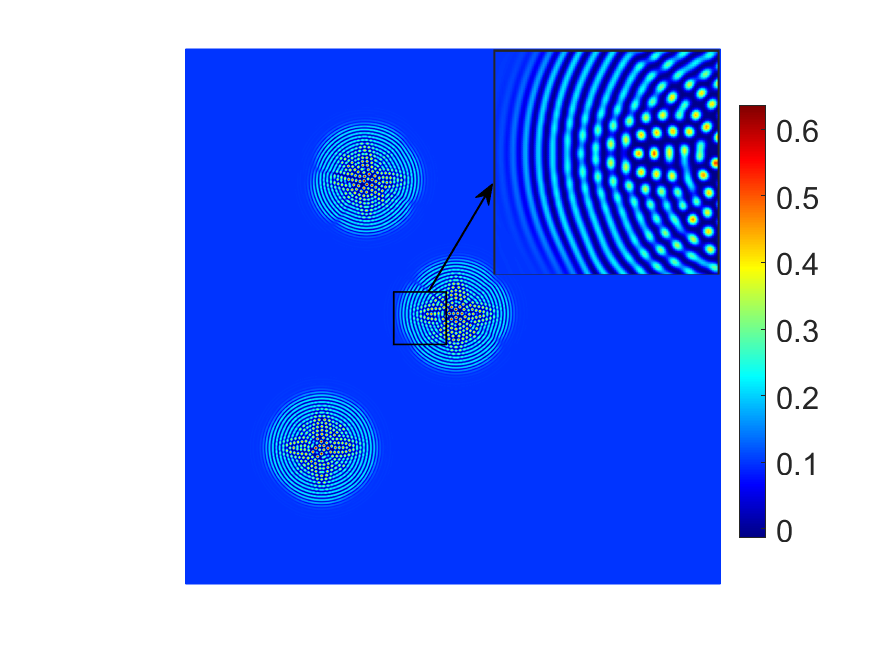}
		\includegraphics[width=4cm,height=3cm]{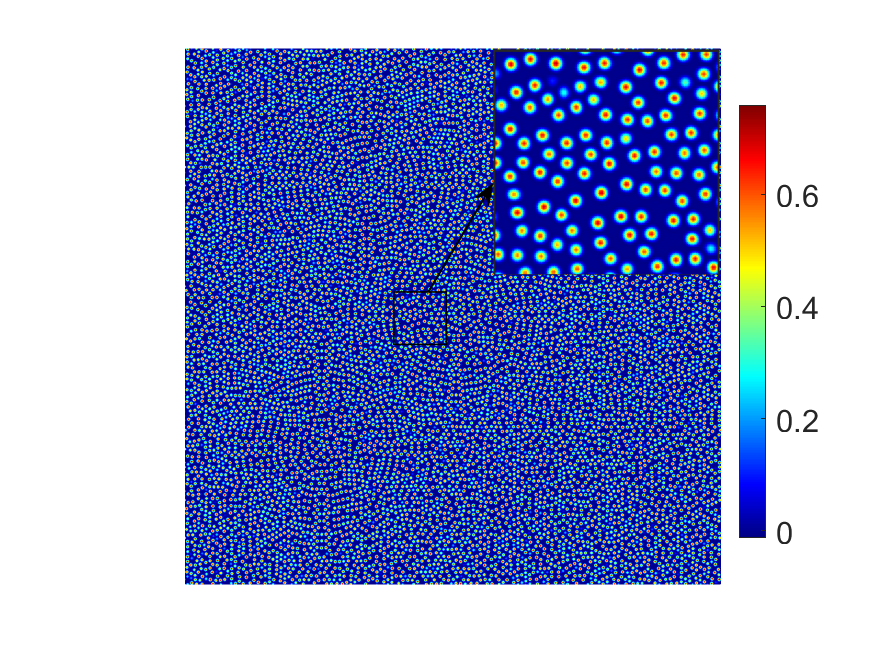}	
		\includegraphics[width=4cm,height=3cm]{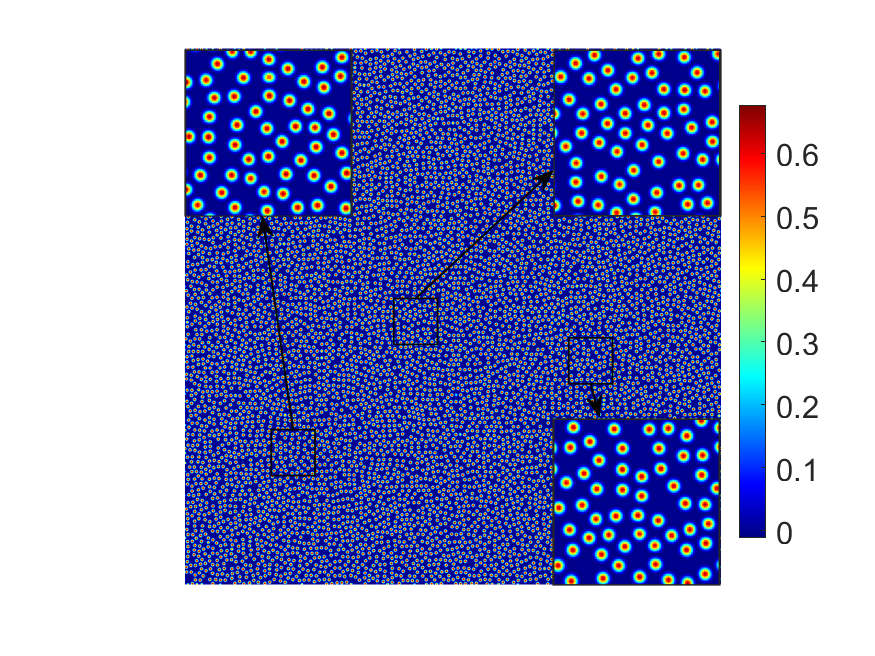}
	}
	\caption{The 2D dynamical behaviors of the crystal growth with vacancy potential. Snapshots of the numerical approximation of the phase variable
		$\phi$ are taken at $t = 30, 100,$ and $800$ using S-SAV-CN(Row 1), S-GPAV-CN(Row 2), and S-ESAV-CN(Row 3) schemes.}\label{fig:4.12}
\end{figure}
\subsubsection{Crystal growth without vacancy potential}
To investigate the crystallization dynamics leading to periodic lattice formation, we first disable vacancy effects by setting $h_{\text{vac}}=0$ and $S=0$, and other parameters are fixed as $\epsilon=0.25, \alpha=1, \beta=1, M=1, B=10^8, C=0.446, \overline{\phi}=0.285, q=0.66, T=2000, \Delta t=0.1.$ The simulations use the second-order GPAV-CN scheme.
Fig.~\ref{fig:4.10} illustrates the growth process of three crystallites, capturing the crystal-liquid interfacial dynamics under varying orientation alignments through phase-field solutions $\phi$ at discrete time levels $t=200, 500, 800,$ and $2000$. 
The grain boundaries are clearly visible, which is consistent with the results reported in \cite{24baskaran2013energy,27qian2025error,16li2019efficient}.
In Fig.~\ref{fig:4.11}(a), we plot the temporal evolution of the original energy \eqref{2.1}, the pseudo energy \eqref{2.9}, and the modified energy \eqref{3.40}. All three curves exhibit monotonic decay with excellent mutual agreement, where an inset provides magnified visualization. Fig.~\ref{fig:4.11}(b) verifies mass conservation throughout the simulation duration.
These numerical results demonstrate that the GPAV-CN scheme is unconditionally energy stable and mass conservative.
\subsubsection{Crystal growth with vacancy potential}
To systematically examine vacancy effects on crystal growth, we conduct simulations using the parameters $h_{vac} = 3000, S=60 ,\epsilon= 0.9,\alpha =1,\beta= 0.8, M=1, C = 0.000446,\overline{\phi} =0.1, q=0.66.$ In Fig.~\ref{fig:4.12}, we plot the snapshots of the phase field variable $\phi$ at different discrete time levels $t=30, 100$, and $800$, using S-SAV-CN, S-GPAV-CN and S-ESAV-CN schemes.
The simulation results demonstrate atomic formation accompanied by complete disappearance of periodic ordering and ubiquitous vacancy distribution, exhibiting fundamentally distinct morphology compared to the vacancy-free system in Fig.~\ref{fig:4.10}. All of the numerical results are consistent with the phase diagram in References~\cite{31zhang2019efficient,32pei2022efficient,33zhang2023highly}.
\begin{figure}[htb]
	\centering
	\subfigure[Energy evolution]{
		\includegraphics[width=4cm,height=3cm]{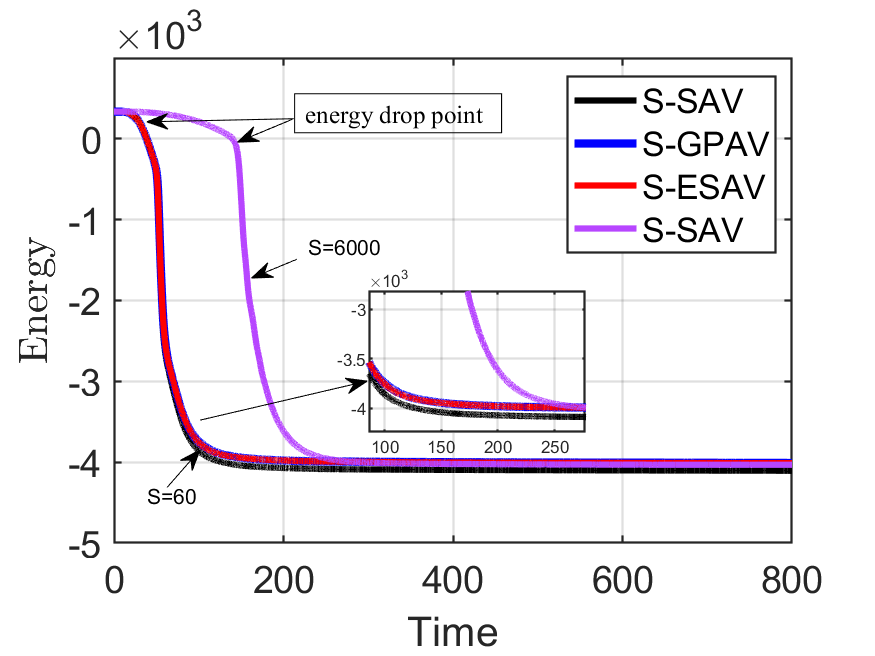}
	}
	\subfigure[Mass evolution]{
		\includegraphics[width=4cm,height=3cm]{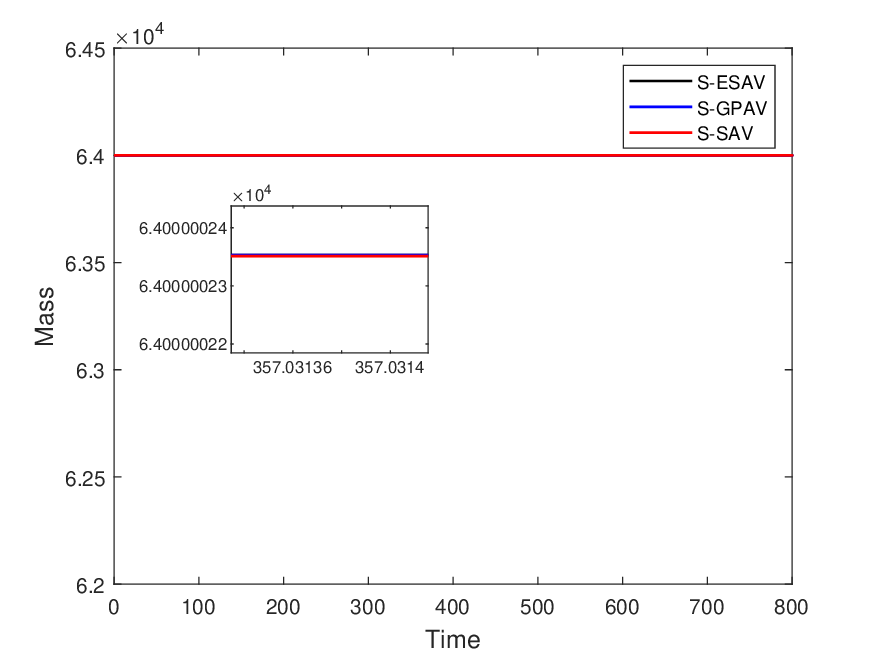}
	}
	\caption{The temporal evolution of the energy $\widetilde{E}$ and mass of the crystal growth with vacancy potential.}\label{fig:4.13}	
\end{figure}
\begin{figure}[htb]
	\centering	
	\subfigure[$\phi_{(S-GPAV)} - \phi_{(S-SAV)}$]{
		\includegraphics[width=3.4cm,height=3cm]{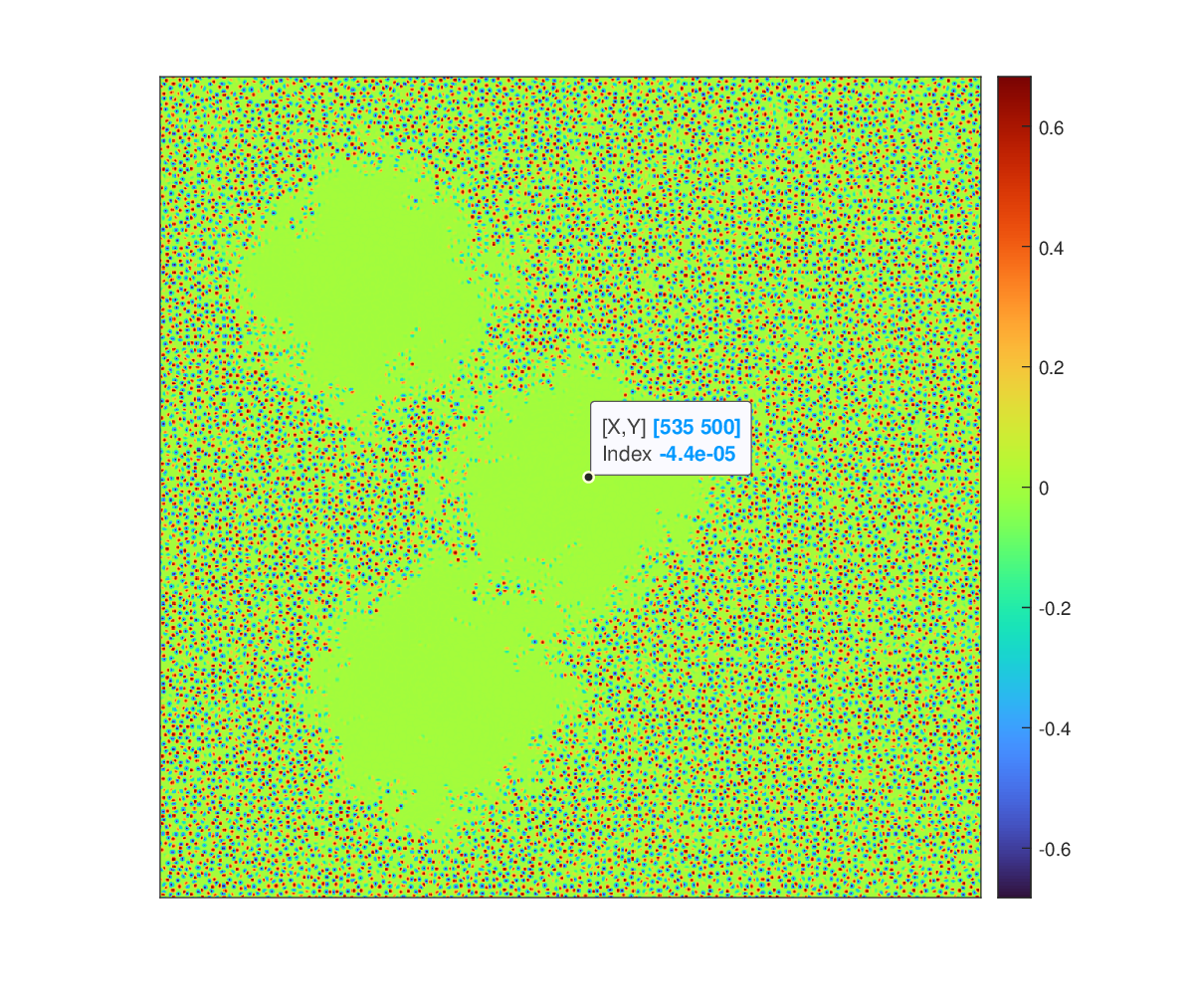}
	}	
	\subfigure[$\phi_{(S-ESAV)} - \phi_{(S-SAV)}$]{
		\includegraphics[width=3.4cm,height=3cm]{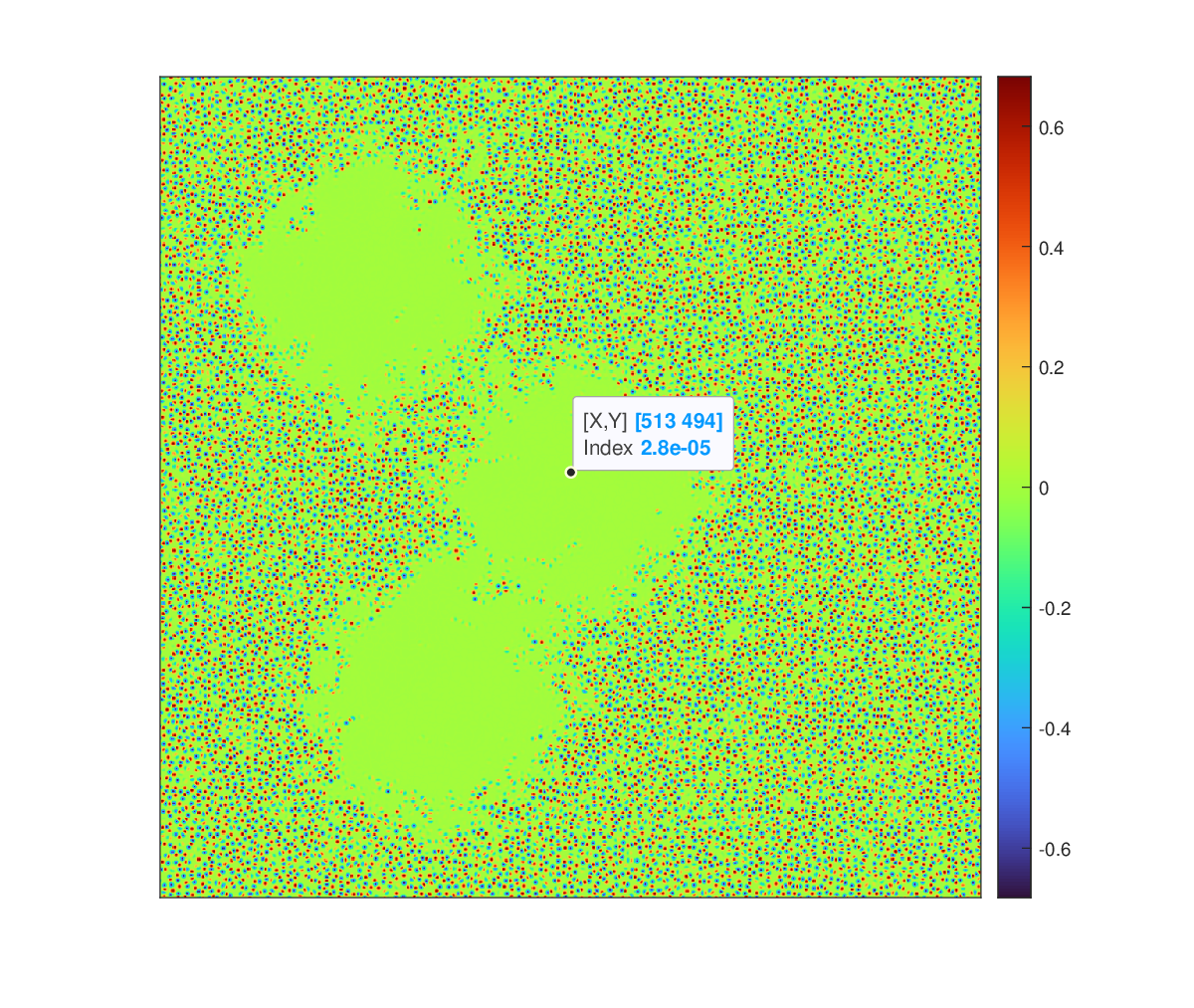}
	}
	\subfigure[Energy  Differences]{
		\includegraphics[width=3.4cm,height=3cm]{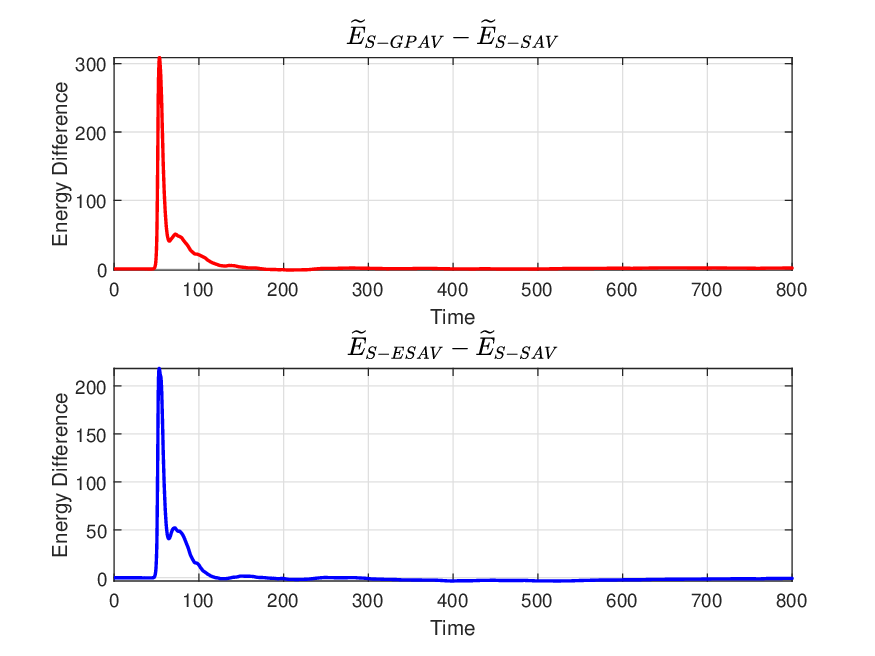}
	}	
	\subfigure[Mass  Differences]{
		\includegraphics[width=3.4cm,height=3cm]{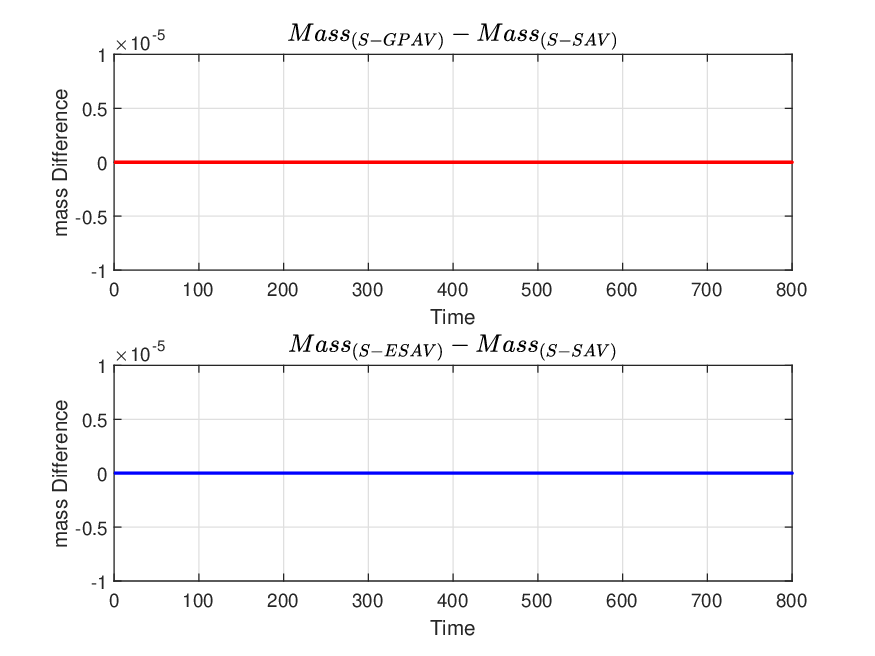}
	}
	\caption{Differences of the phase field variable $\phi$, energy $\widetilde{E}$, and mass }\label{fig:4.14}
\end{figure}
\begin{figure}[htb]
	\centering
	\subfigure[$\Delta t= 0.015,S=100$]{
		\includegraphics[width=3.4cm,height=2.8cm]{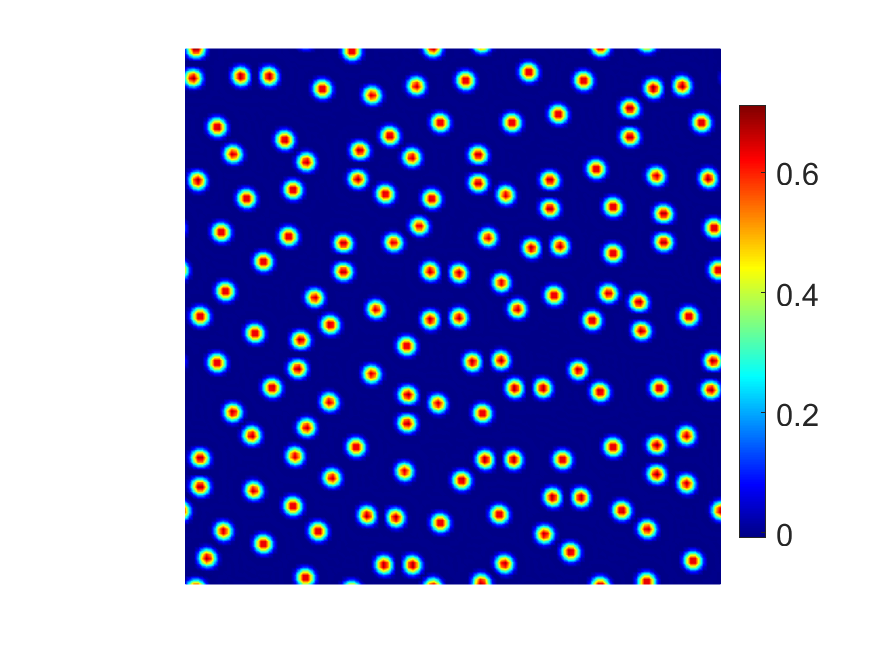}
	}
	\subfigure[$\Delta t= 0.001,S=0$]{
		\includegraphics[width=3.4cm,height=2.8cm]{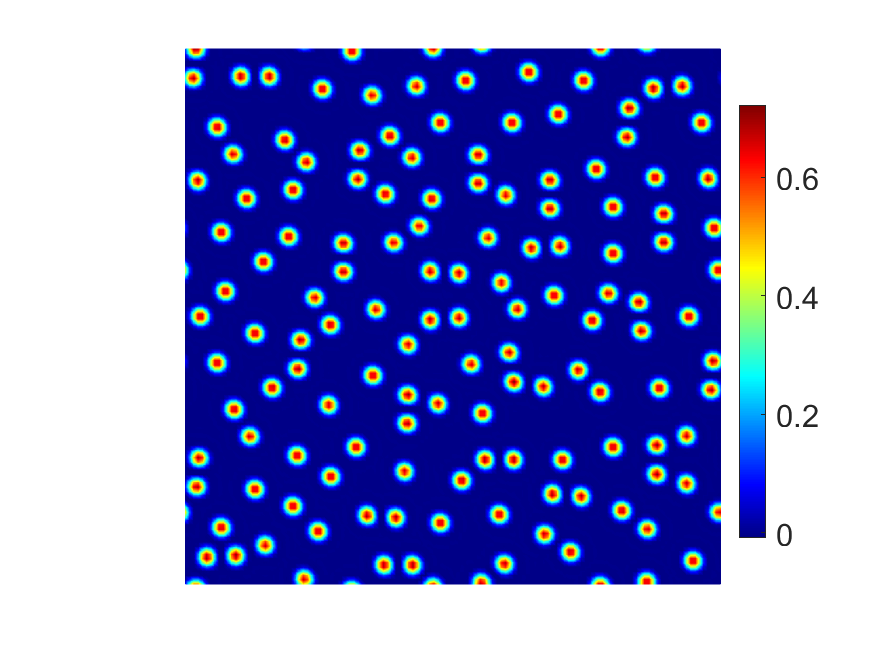}	
	}
	\subfigure[$\Delta t= 0.015,S=0$]{
		\includegraphics[width=3.4cm,height=2.8cm]{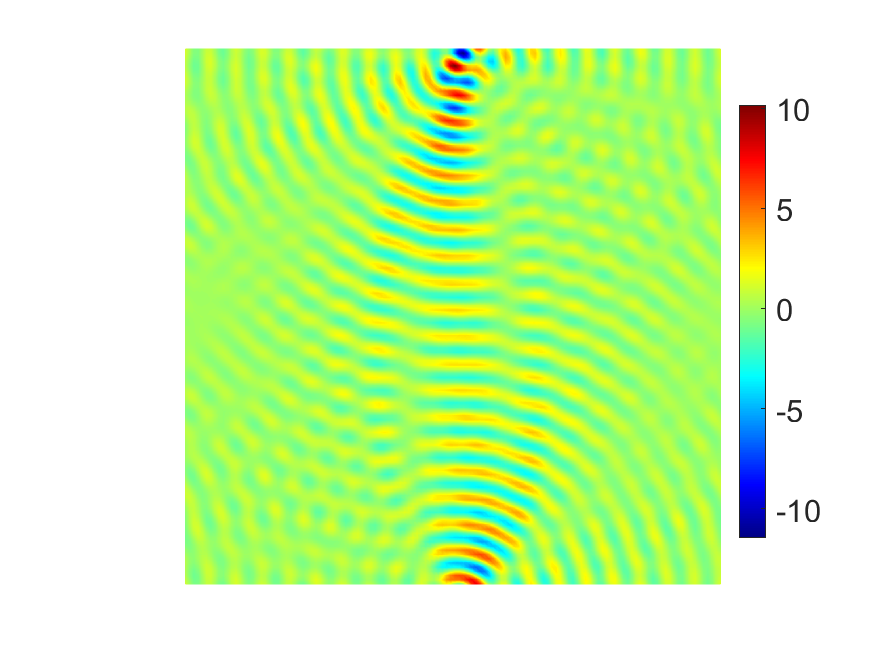}
	}
	\subfigure[$S=100$]{
		\includegraphics[width=3.4cm,height=2.8cm]{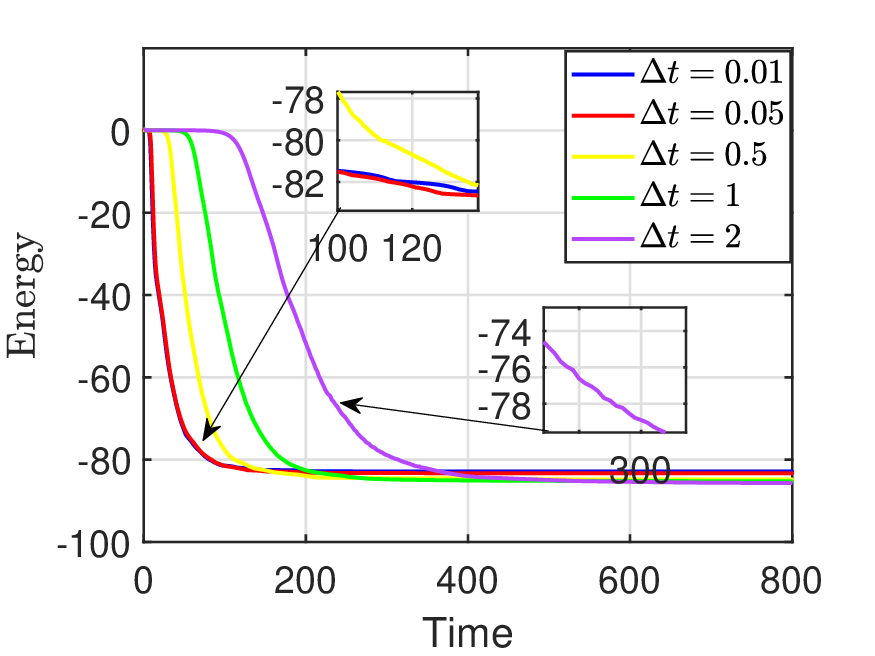}
	}
	\caption{The phase transition simulations with vacancy potential using fixed time steps.}\label{fig:4.15}
\end{figure}

\begin{figure}[htp]
	\centering
	\subfigure[$\Delta t_{\min}=0.0001, \Delta t_{\max}=2,\alpha_1=10^4$]{
		\includegraphics[width=4cm,height=3cm]{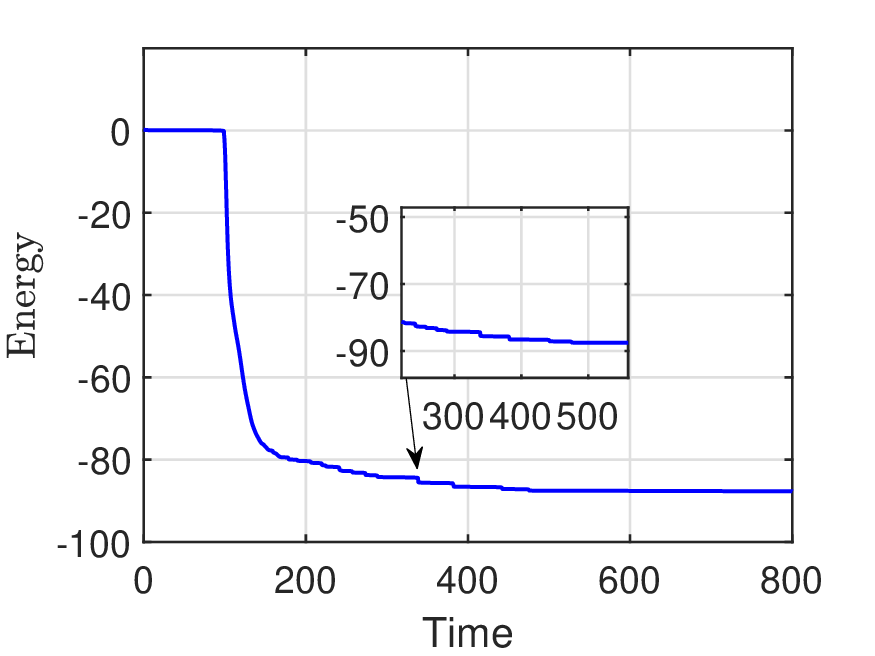}
		\includegraphics[width=4cm,height=3cm]{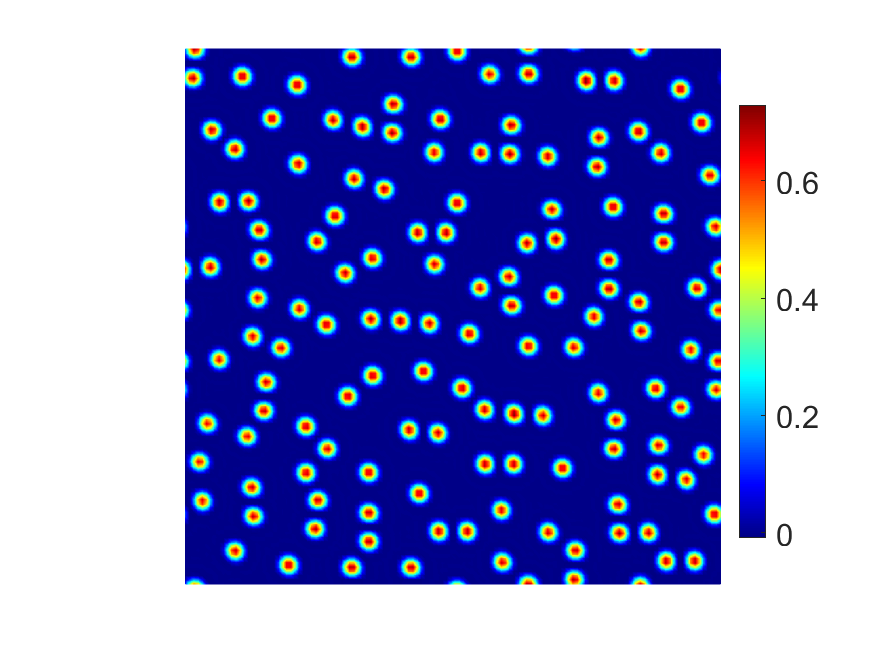}
		\includegraphics[width=4cm,height=3cm]{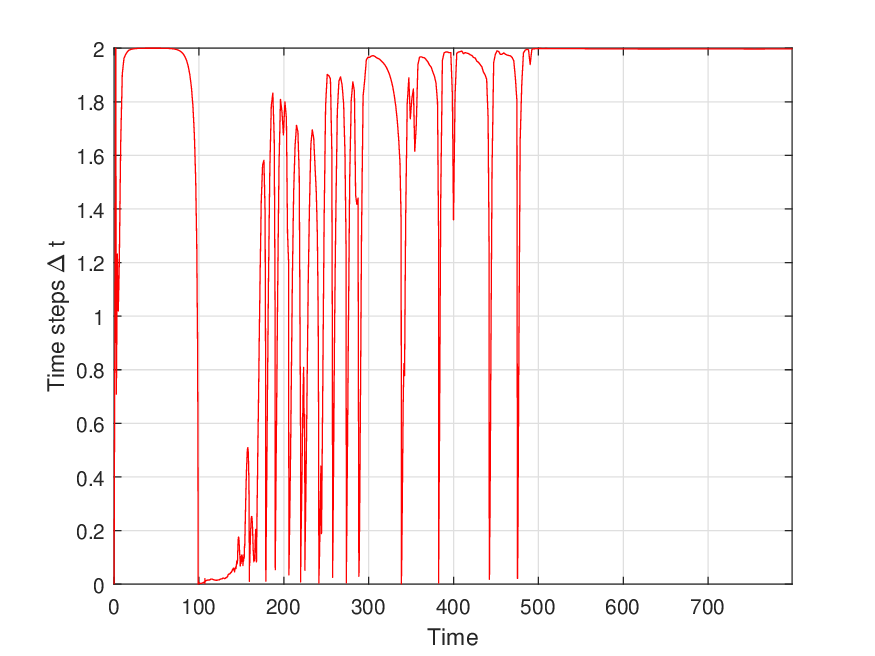}
	}
	\subfigure[$\Delta t_{\min}=0.002, \Delta t_{\max}=6,\alpha_1=10^5$]{
		\includegraphics[width=4cm,height=3cm]{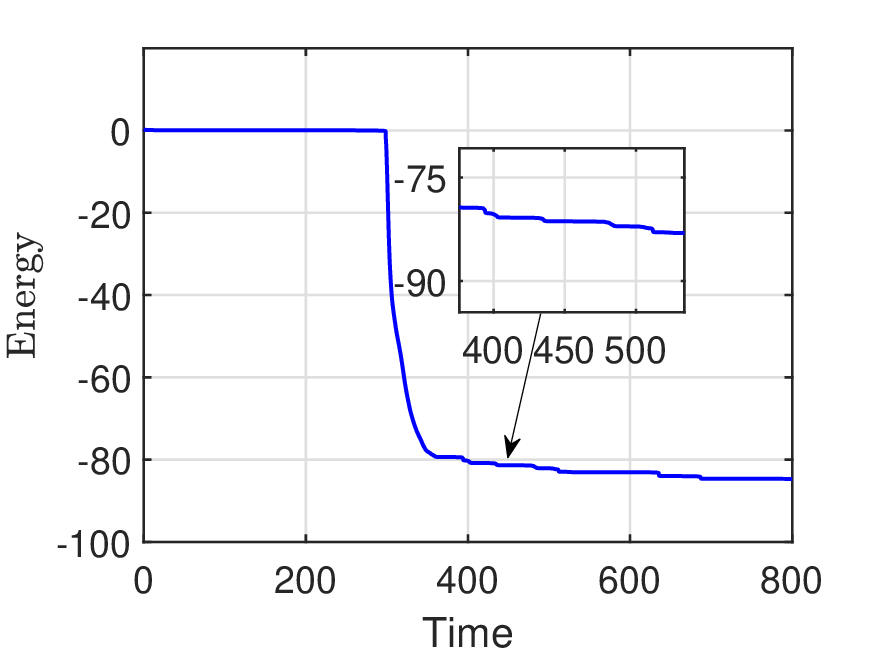}
		\includegraphics[width=4cm,height=3cm]{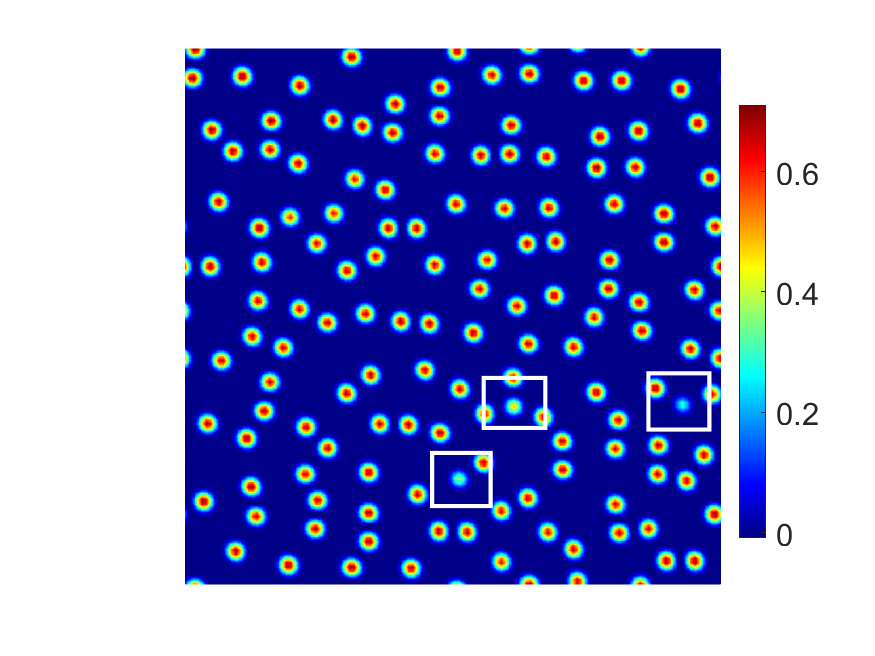}
		\includegraphics[width=4cm,height=3cm]{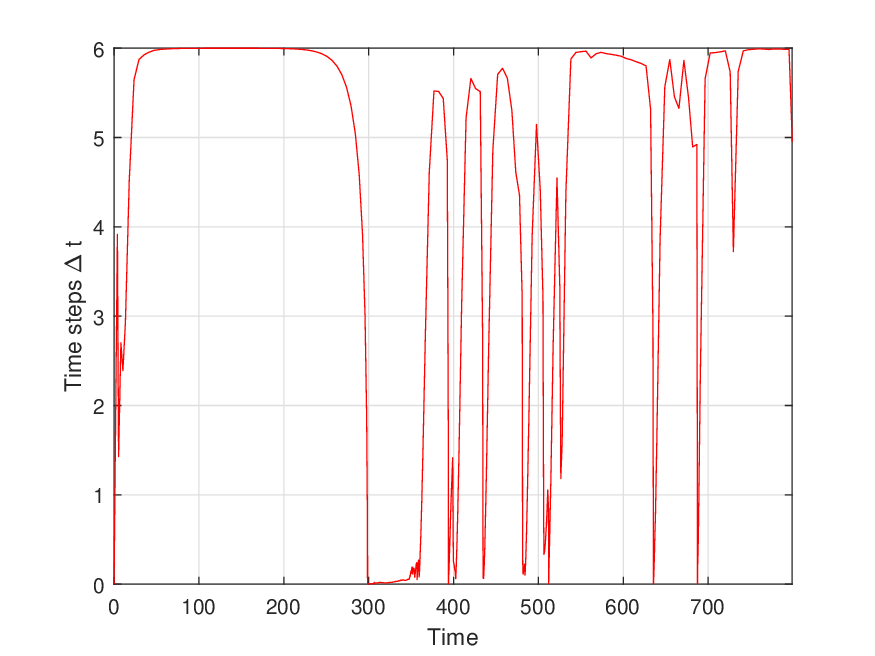}
	}
	\subfigure[$\Delta t_{\min}=0.005, \Delta t_{\max}=10,\alpha_1=10^6$]{
		\includegraphics[width=4cm,height=3cm]{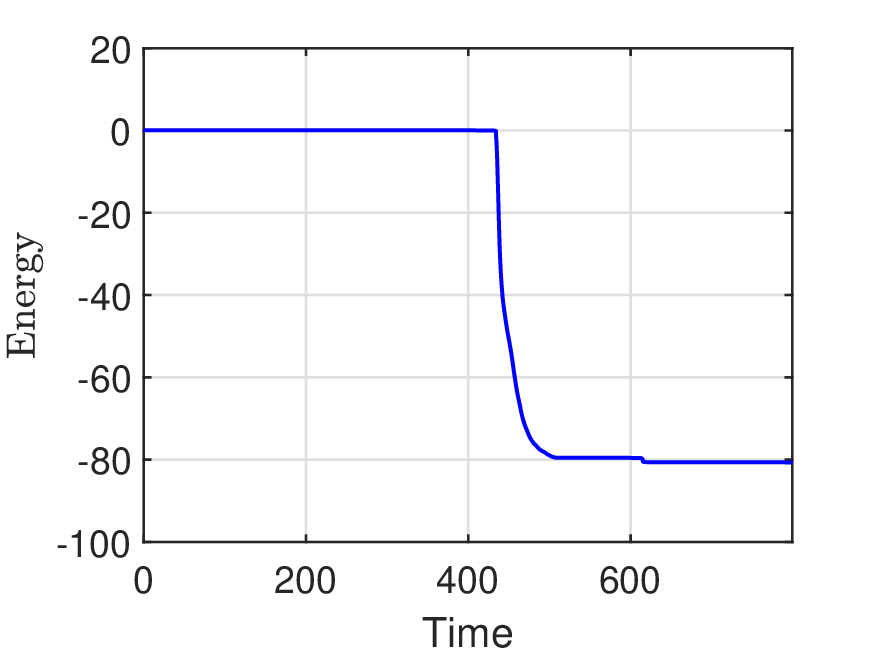}
		\includegraphics[width=4cm,height=3cm]{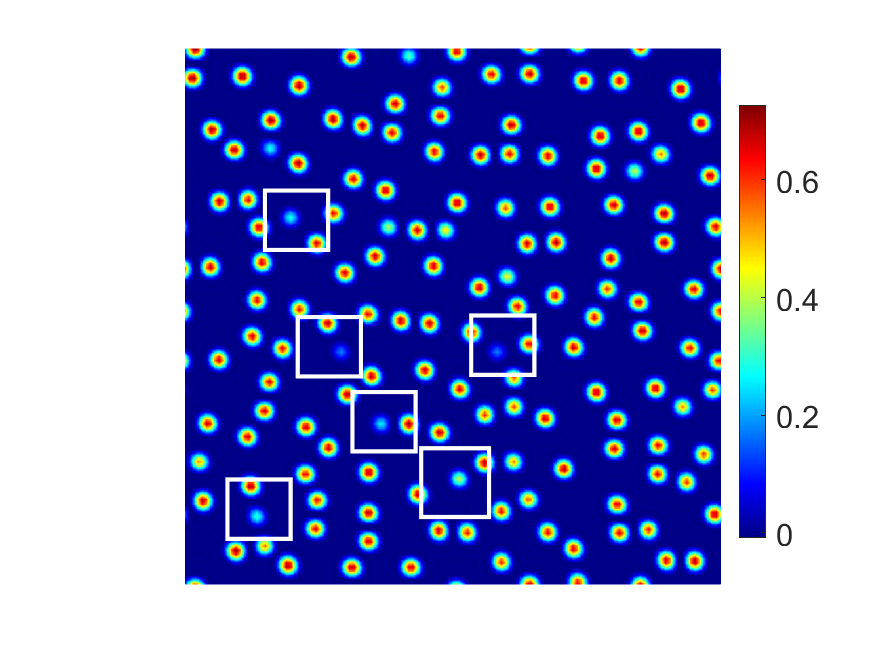}
		\includegraphics[width=4cm,height=3cm]{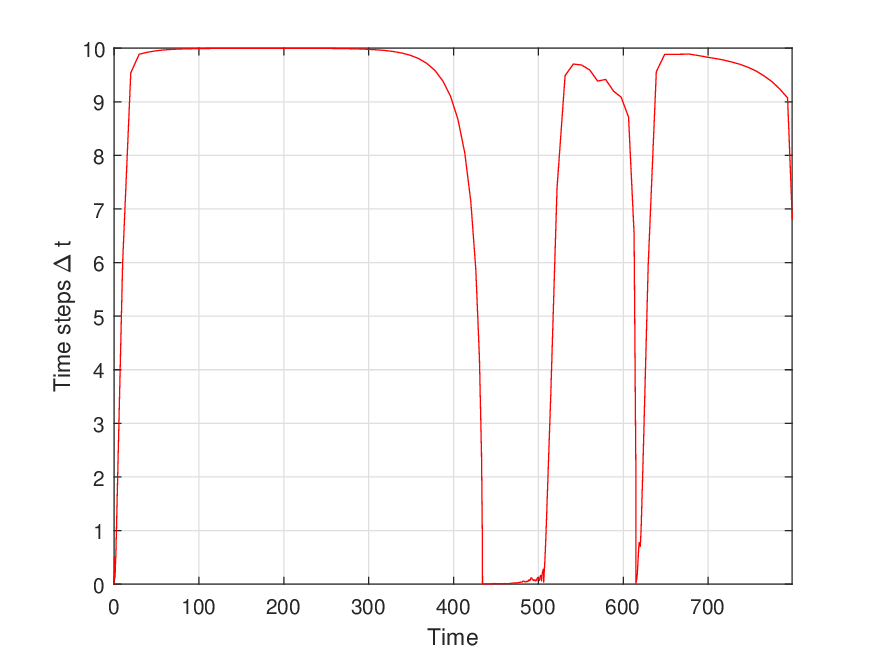}
	}
	\subfigure[$\Delta t_{\min}=0.0001, \Delta t_{\max}=7,\alpha_1=10^6$]{
		\includegraphics[width=4cm,height=3cm]{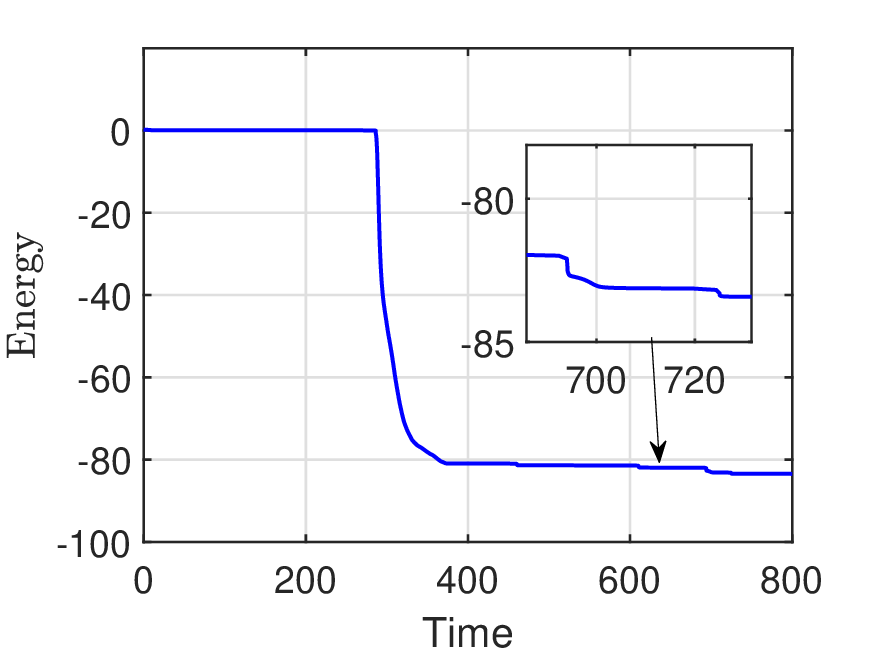}
		\includegraphics[width=4cm,height=3cm]{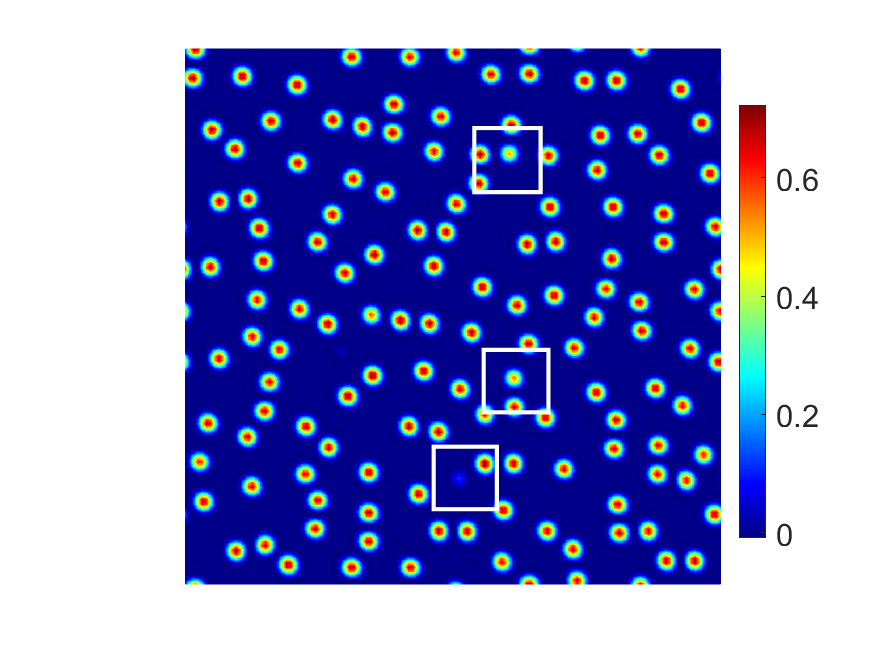}
		\includegraphics[width=4cm,height=3cm]{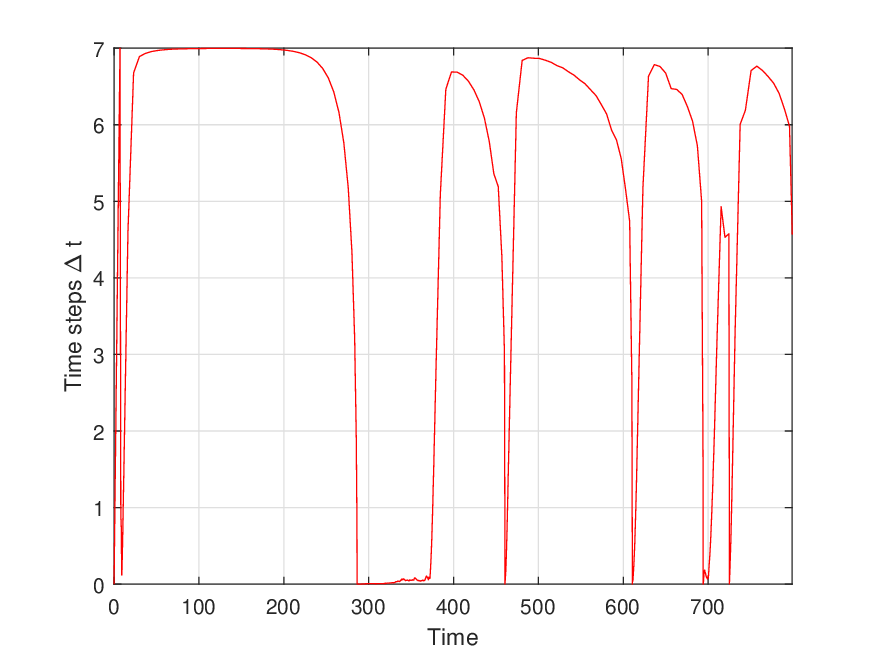}
	}
	\caption{The phase transition simulations with vacancy potential using adaptive time strategy in \cite{47qiao2011adaptive}. First column: discrete energy \eqref{3.21} evolution; Second column: Snapshot of $\phi$ at $t=800$; Third column: adaptive time step evolution.
	}\label{fig:4.16}
\end{figure}

\begin{figure}[htp]
	\centering
	\subfigure[$\mathit{w_{size}} = 7, \mathit{ratio_{max}}=1.5, \Delta t_{\min}=0.0001,  \Delta t_{\max}=2,\alpha_1=10^4$]{
		\includegraphics[width=4cm,height=3cm]{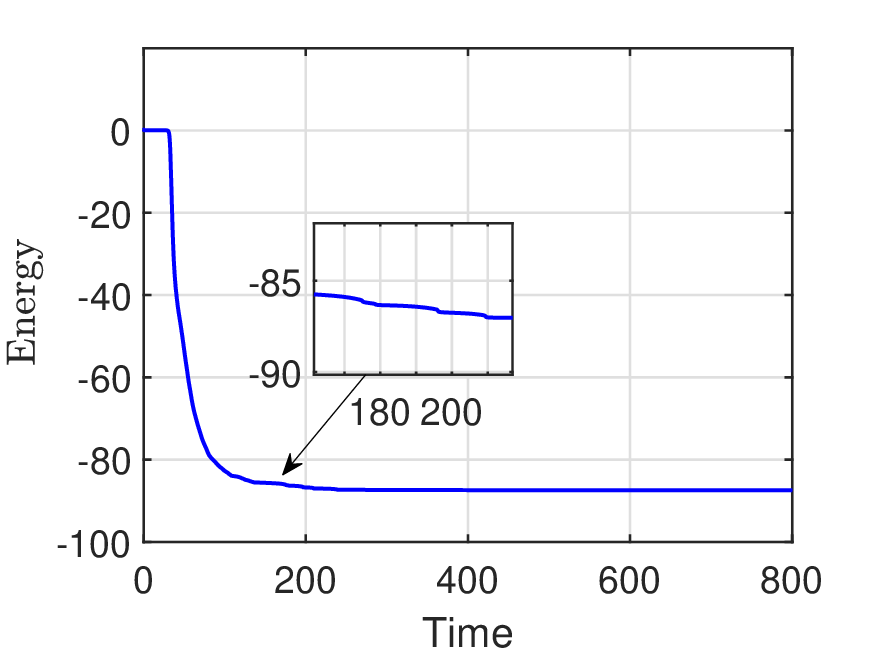}
		\includegraphics[width=4cm,height=3cm]{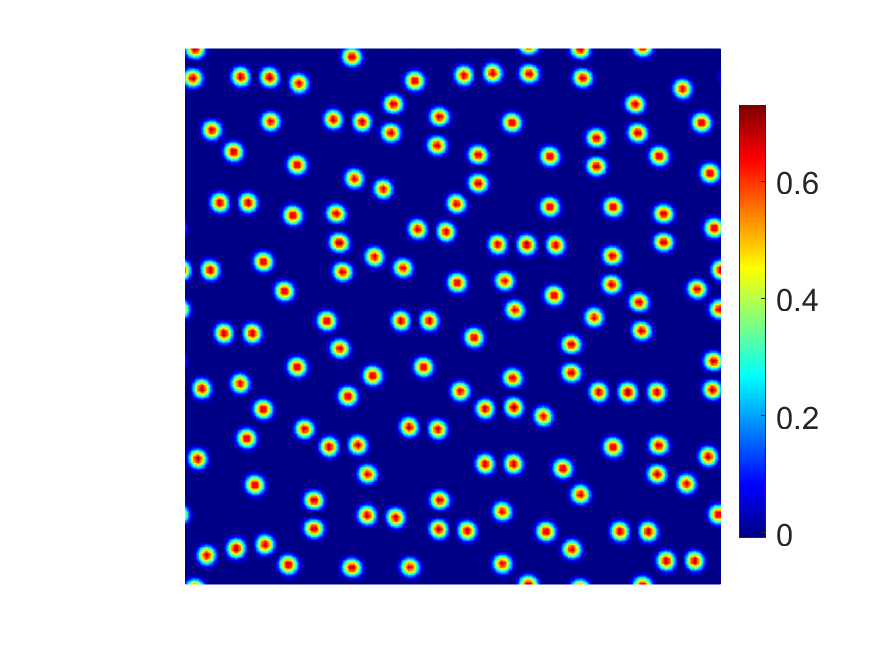}
		\includegraphics[width=4cm,height=3cm]{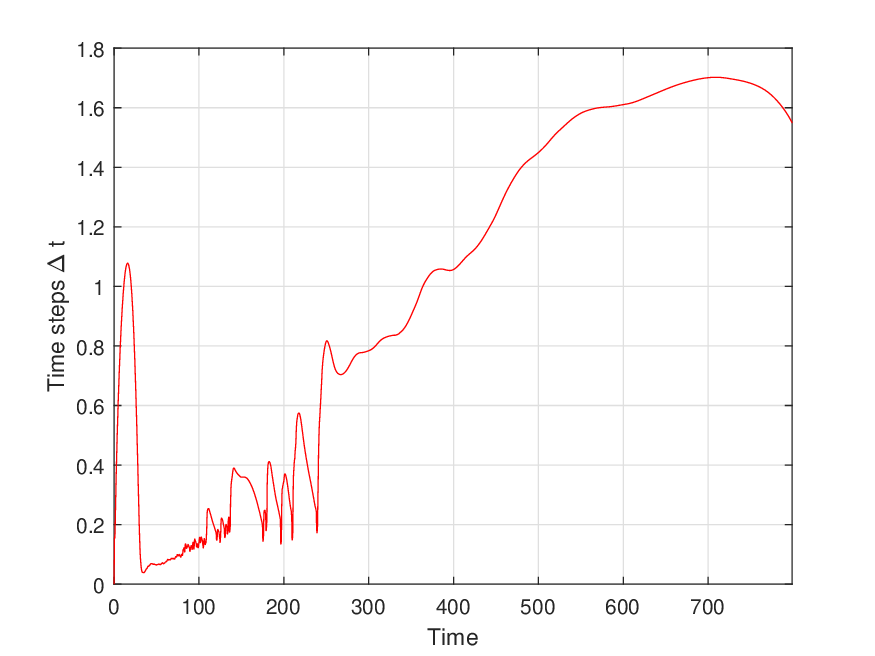}
	}
	\subfigure[$\mathit{w_{size}} = 7, \mathit{ratio_{max}}=1.5, \Delta t_{\min}=0.002,  \Delta t_{\max}=6,\alpha_1=10^5$]{
		\includegraphics[width=4cm,height=3cm]{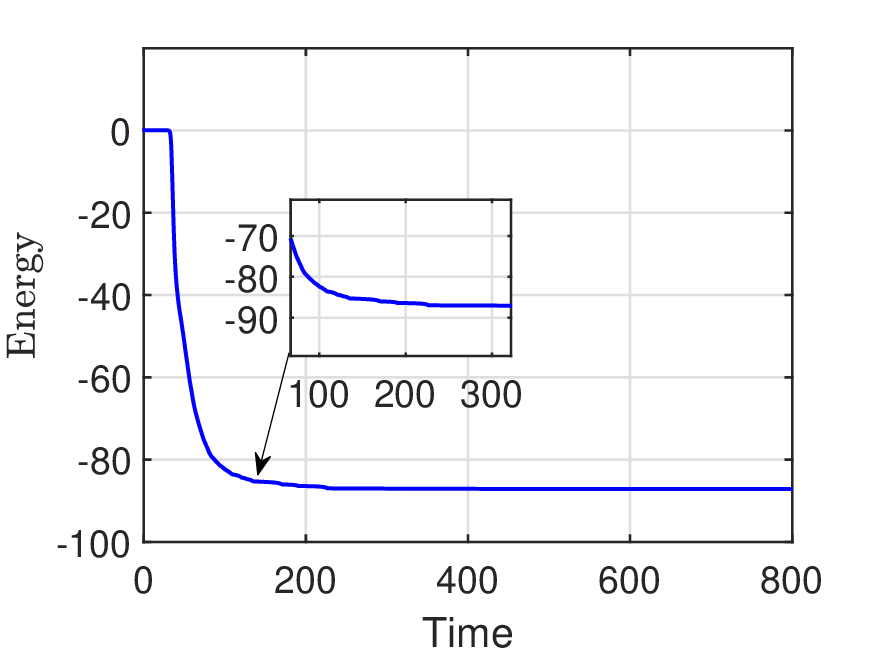}
		\includegraphics[width=4cm,height=3cm]{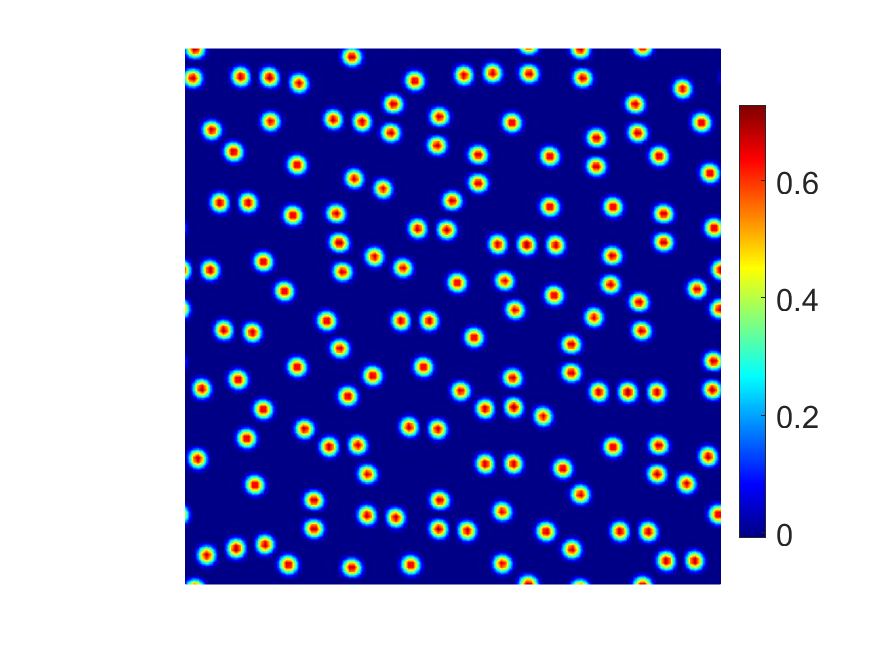}
		\includegraphics[width=4cm,height=3cm]{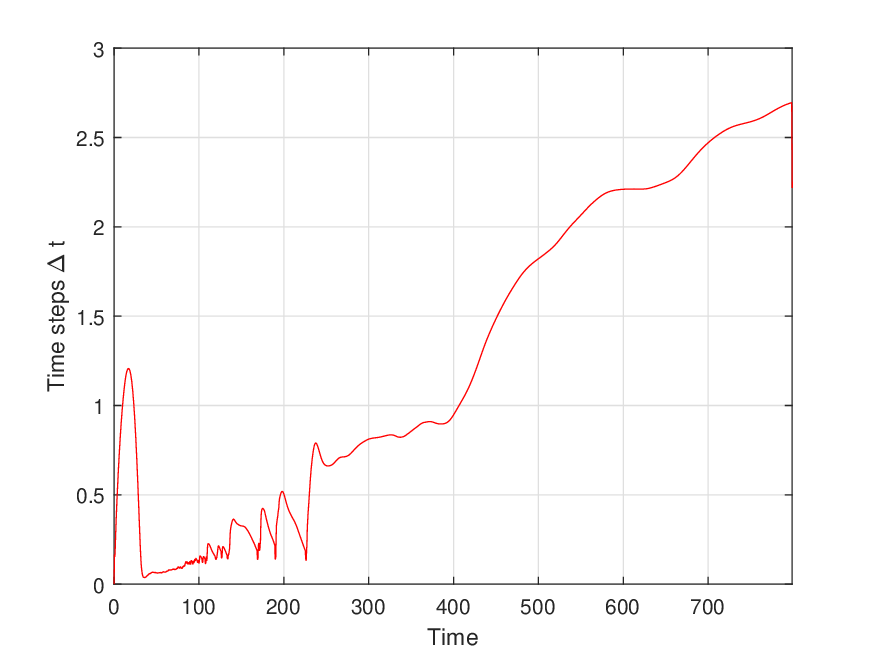}
	}
	\subfigure[$\mathit{w_{size}} = 7, \mathit{ratio_{max}}=1.5, \Delta t_{\min}=0.005,  \Delta t_{\max}=10,\alpha_1=10^6$]{
		\includegraphics[width=4cm,height=3cm]{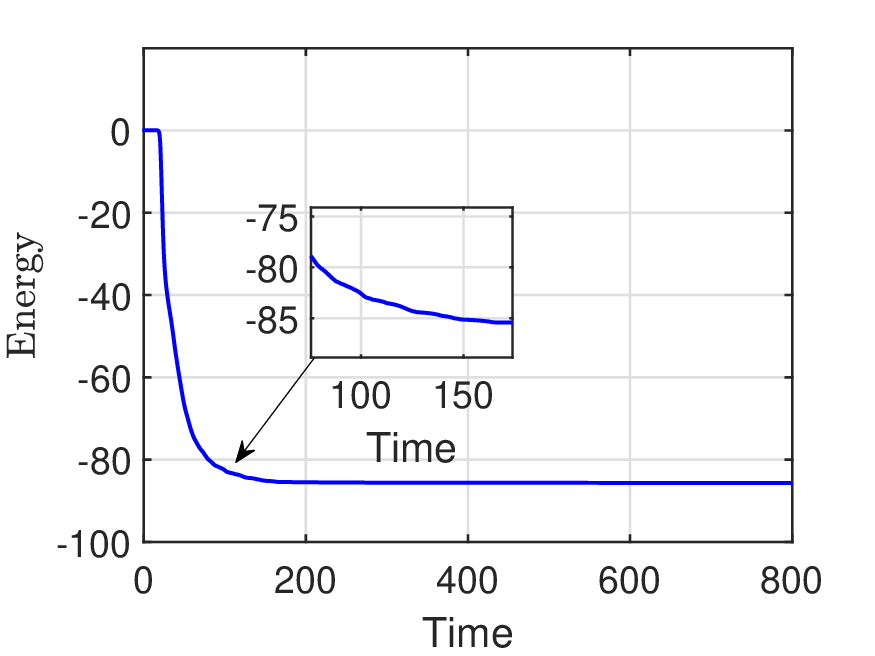}
		\includegraphics[width=4cm,height=3cm]{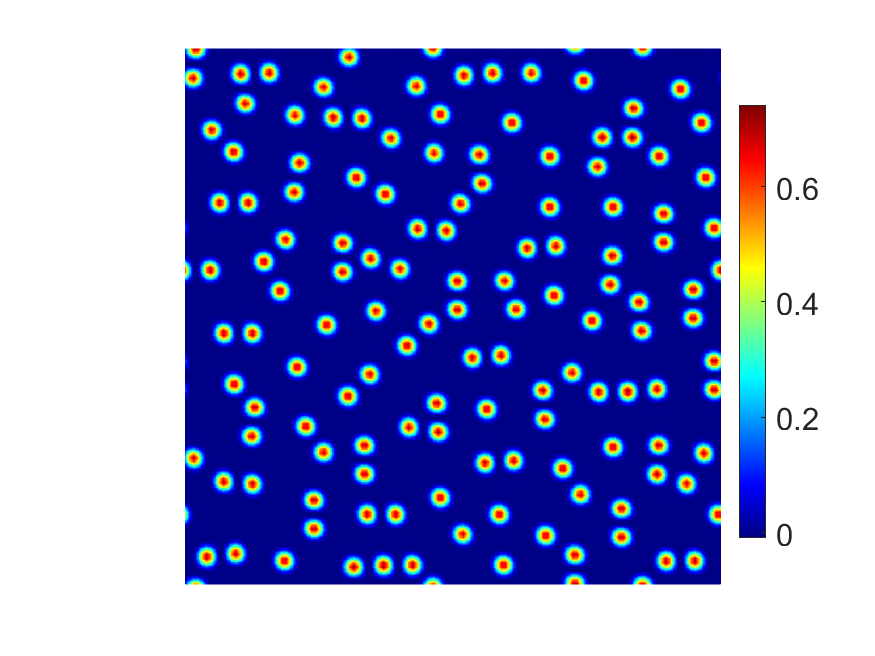}
		\includegraphics[width=4cm,height=3cm]{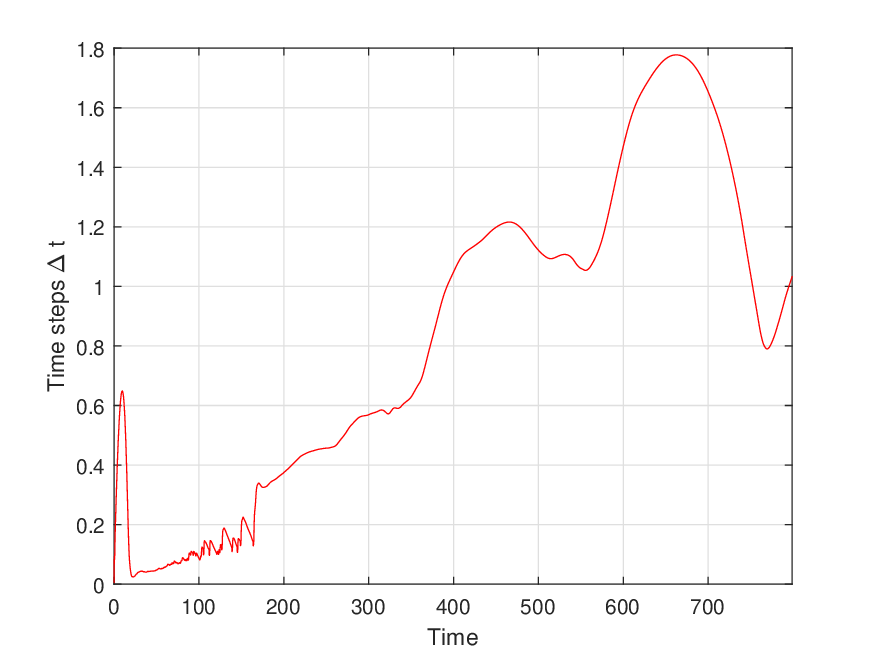}
	}
	\subfigure[$\mathit{w_{size}} = 7, \mathit{ratio_{max}}=1.5, \Delta t_{\min}=0.0001,  \Delta t_{\max}=7,\alpha_1=10^6$]{
		\includegraphics[width=4cm,height=3cm]{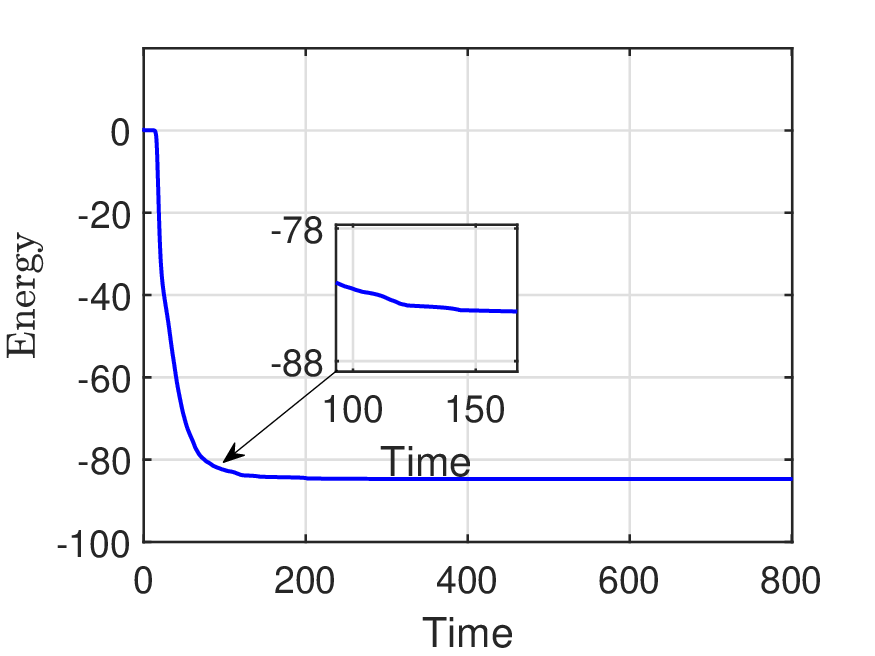}
		\includegraphics[width=4cm,height=3cm]{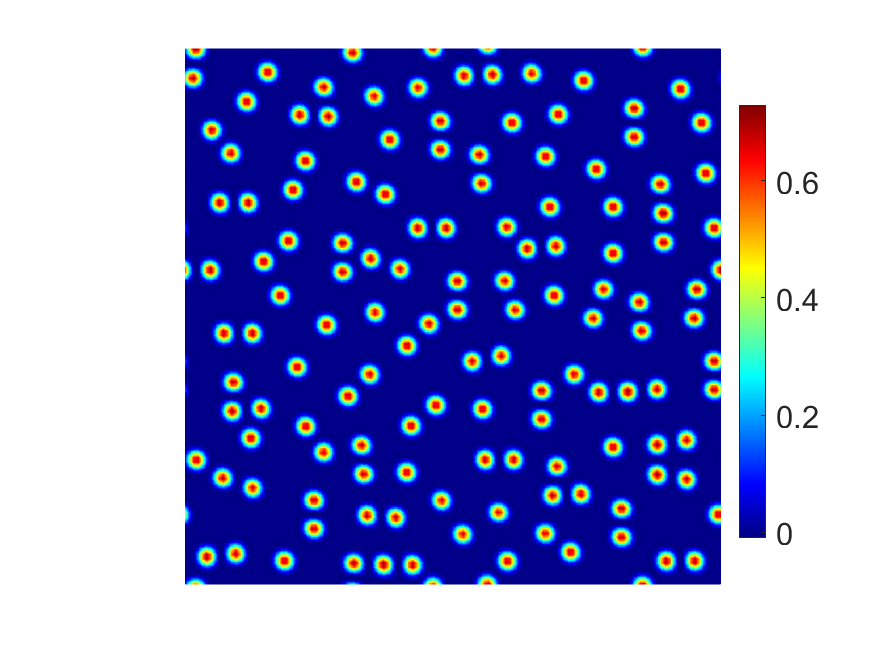}
		\includegraphics[width=4cm,height=3cm]{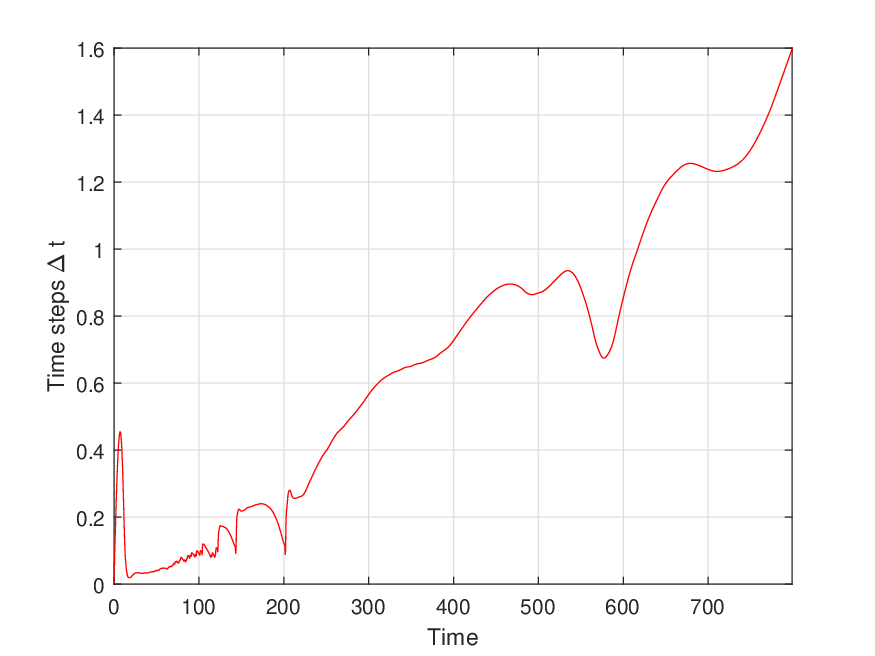}
	}		
	\caption{The phase transition simulations with vacancy potential using the EV-MA adaptive time strategy. First column: discrete energy \eqref{3.21} evolution; Second column:  Snapshot of $\phi$ at $t=800$; Third column: adaptive time step evolution.}\label{fig:4.17}
\end{figure}
To verify the unconditionally energy stability of these three schemes, Fig.~\ref{fig:4.13}(a) presents the energy dissipation curves for the S-SAV-CN, S-GPAV-CN, and S-ESAV-CN schemes with $S=60$ and $S=6000$(S-SAV-CN scheme). As shown in the results, for identical parameter values, the modified energies (\eqref{3.3}, \eqref{3.40}, and \eqref{3.63}) exhibit similar decay rates. Furthermore, the energy drop point occurs later with $S=6000$ compared to $S=60$.
These results collectively demonstrate the energy stability of the numerical schemes. 
Fig.~\ref{fig:4.13}(b) plots the mass evolution curves for all three schemes with $S=60$, numerically proving the conservation of mass.
To compare the numerical results of the three discretization schemes in detail, Fig.~\ref{fig:4.14} presents the spatial distribution of the differences in $\phi$, i.e., (a) $\phi_{(S-GPAV)} - \phi_{(S-SAV)}$ and (b) $\phi_{(S-ESAV)} - \phi_{(S-SAV)}$. 
These $\phi$ values are obtained at $t=800$ with $S=60$.
Under the same experimental parameters, all three schemes exhibit nearly identical crystal growth patterns around the three initial crystallites, with minor discrepancies ($ \sim 10^{-5}$). However, distinct crystallite distributions emerge in regions distant from the three initial crystallites.
As shown in Fig.\ref{fig:4.14}(c)-(d), the temporal evolution of the energy and mass differences (from Fig.\ref{fig:4.13}) reveals that: (i) the energy dissipation rates differ slightly during rapid energy decay stages, and (ii) strict mass conservation is maintained,  with zero mass differences throughout the simulation.
\subsection{Adaptive time stepping}
In this example, we use the phase transition experiments to verify the effectiveness of the new adaptive time-stepping strategy. The initial conditions are defined as \eqref{4.2.1}, and the other parameters are set as
$\Omega = [0, 128]^{2}, \varepsilon=1, \alpha=1, \beta=1, M=1, T=800, h_{vac} = 5000, B=1e5, S_{cr} = 100.$
Fig.~\ref{fig:4.15} presents numerical results obtained using small, fixed time-step sizes: the phase diagram at $t = 800$ with (a)  $\Delta t = 0.015, S = 100$; (b) $\Delta t = 0.001, S = 0$; (c) $\Delta t = 0.015, S = 0$; and (d) Evolution of discrete energy \eqref{3.21}. Testing revealed that the stabilization term must be activated when the time step exceeds the threshold $\Delta t_{cr} = 0.014$ in this case. Otherwise, instability occurs (see Fig.~\ref{fig:4.15}(c)).

To compare the energy evolution and time steps under different adaptive strategies, we use the adaptive strategy in \cite{47qiao2011adaptive}. 
With $\Delta t_{\text{cr}} = 0.014$, the following parameter combinations are used in Fig.~\ref{fig:4.16}: (a)$\Delta t_{\min} = 0.0001$, $\Delta t_{\max} = 2$, $\alpha_1 = 10^4$; (b)$\Delta t_{\min} = 0.002$, $\Delta t_{\max} = 6$, $\alpha_1 = 10^5$; (c)$\Delta t_{\min} = 0.005$，$\Delta t_{\max} = 10$, $\alpha_1 = 10^6$; and (d)$\Delta t_{\min} = 0.0001$, $\Delta t_{\max} = 7$, $\alpha_1 = 10^6$.
Fig.~\ref{fig:4.16} shows the corresponding results, including the evolution of the discrete energy \eqref{3.21}, the phase diagram at $t=800$, and the adaptive time step evolution.

Owing to the high-order and strongly nonlinear nature of the VMPFC model, the energy is highly sensitive to changes in the time steps, leading to significant numerical oscillations(see the first column of Fig.~\ref{fig:4.16}). 
These oscillations, in turn, induce violent fluctuations in the time steps (third column) and undermine the stability of the adaptive algorithm, as reflected in the disrupted phase structures within the white boxes in the second-column subfigures.
A comparison with the benchmark solution and the reference discrete energy provided in Fig.~\ref{fig:4.15} further confirms noticeable deviations in the numerical results, suggesting that the stability of the algorithm is influenced by parameter selection.

Furthermore, the same experiment is conducted using the newly proposed EV-MA adaptive time-stepping algorithm\eqref{alg:adaptive_timestep}. 
With $\mathit{w_{size}} = 7, \mathit{ratio_{max}}=1.5$, the corresponding  results are depicted in Fig.~\ref{fig:4.17}.
The EV-MA adaptive strategy utilizes a moving average of the discrete energy \eqref{3.21} variation to achieve smoother step changes, which effectively suppresses step oscillations (see the third column of Fig.~\ref{fig:4.17}) and more accurately captures the physical evolution process (second column). Simultaneously, during periods of rapid energy change, the algorithm automatically selects smaller time steps to capture the variation of the numerical solution, whereas larger steps are adopted when the system reaches a steady state, thereby enhancing computational efficiency.

In practical applications, the stable parameter range of the adaptive time algorithm is often unknown a priori. For the adaptive time-stepping algorithm in \cite{47qiao2011adaptive}, selecting an appropriate value of $\alpha_1$ remains challenging in strongly nonlinear problems.
In contrast, the proposed adaptive algorithm demonstrates significantly improved robustness. In most cases, the algorithm's performance shows low sensitivity to the choice of $\alpha_1$, while also allowing the use of smaller $\Delta t_{\min}$ and larger $\Delta t_{\max}$ to accommodate practical simulation requirements. As a result, the proposed strategy substantially widens the effective parameter range and reduces the computational effort associated with parameter tuning. Finally, a comparison of the computational cost between fixed and adaptive time-stepping approaches is summarized in Table~\ref{tab:tab1}. The adaptive method uses the following parameter set: $\mathit{w_{size}} = 7$, $\mathit{ratio_{max}} = 1.5$, $\Delta t_{\min} = 0.0001$, $\alpha_1 = 10^4$, and $\Delta t_{\max} = 2$. 
\section{Conclusion}
In this work, we develop and analyze three unconditionally energy stable numerical schemes  with non-iterative computation for the VMPFC model. We first construct a Crank–Nicolson scheme based on the stabilized-SAV method, and then introduce two improved methods for further optimization. The first method reduces the computational complexity by decreasing the number of constant-coefficient linear equations to be solved at each time step, while the second method not only maintains this reduced complexity but also eliminates the lower-bound restriction.
\clearpage 
\begin{table}[t]
	\centering
	\caption{The CPU time (seconds) comparison for phase transition simulations.}
	\label{tab:tab1}
	\begin{tabular}{c c c c c } 
		\hline
		Fixed $\Delta t=0.001$&Fixed $\Delta t=0.015$& Adaptive method in\cite{47qiao2011adaptive}& EV-MA Adaptive method\\ 
		\hline 
		6194.1  & 655.018  &27.801 & 8.432 \\
		\hline
	\end{tabular}
\end{table}
Although large time steps can achieve correct steady states via unconditional stability, they may fail to capture rapid transient processes of the structure.
The unconditional energy stability of our schemes permits the use of suitable adaptive time strategy to save CPU time without losing accuracy. 
We propose an adaptive time step approach based on the Energy-Variation Moving Average (EV-MA), which improves the robustness of the conventional adaptive time stepping by effectively suppressing step-size oscillations caused by strongly nonlinear properties.
The small time steps are used when the energy variation is large while lager time steps can be used when the energy variation is small.
To our knowledge, this is the first adaptive time-stepping strategy for the VMPFC model. It offers an effective framework for systems highly sensitive to time-step changes during rapid energy decay.
Numerical results indicate that the method performs effectively under various parameter settings, demonstrating satisfactory robustness. 
\section*{Acknowledgments}
This work is supported by National Natural Science Foundation of China(12431014) and Hunan Provincial Innovation Foundation for Postgraduate, China (XDCX2024Y174).

\end{document}